\newtheorem{thm}{Theorem}
\newtheorem{prop}[thm]{Proposition}
\newtheorem{lem}[thm]{Lemma}
\newtheorem{cor}[thm]{Corollary}
\newtheorem{Question}[thm]{Question}
\theoremstyle{remark}
\newtheorem{rem}[thm]{\textbf{Remark}}
\newtheorem{cl}[thm]{\textbf{Claim}}
\theoremstyle{definition}
\newtheorem{defn}[thm]{Definition}
\newcommand{\R}{\mathbb{R}}
\newcommand{\N}{\mathbb{N}}
\newcommand{\W}{\mathcal{W}}
\newcommand{\Diff}{\mathit{Diff}}
\DeclareMathOperator{\diam}{diam}
\DeclareMathOperator{\diff}{diff}
\begin{document}

\title{How linear can a non-linear hyperbolic IFS be?}
\author{A. Algom, S. Ben Ovadia, F. Rodriguez Hertz, and M. Shannon}
\date{}

\maketitle

\begin{abstract}
Motivated by a question of M. Hochman, we construct examples of hyperbolic IFSs $\Phi$ on $[0,1]$ where linear and non-linear behaviour coexist. Namely, for every $2\leq r \leq \infty$ we exhibit the existence of a $C^r$-smooth IFS such that $f'\equiv c(\Phi)$ on the attractor and $f''\equiv 0$ for every $f \in \Phi$, yet $\Phi$ is not $C^t$-smooth for any $t>r$, nor $C^r$-conjugate to self-similar. We provide a complete classification of these systems. Furthermore, when $r>1$, we give a necessary and sufficient Livsic-like matching condition for a self-conformal $C^r$-smooth IFS to be conjugated to one of these systems having $f''=0$ on the attractor, for every $f\in \Phi$. We also show that this condition fails to ensure the existence of a $C^1$-conjugacy in mere $C^1$-regularity.
\end{abstract}

%%%%%%%%%%%%%%%%%%%%%%%%%%%%%%%%%%%%%%%%%%%%%%%%%%%%%%%%%%%%%%%%%%%%%%%%%%%%%%%%%%%%%%%%%%%%%%%%%%%%%%%%%%%%%%%%%%%%%%%%%%%%%%%%%%%%%%%%%%%%%%%%%%%%

\section{Introduction} \label{Section: Intro}
Let $\Phi = \lbrace f_1,...,f_n \rbrace\subseteq C^r ([0,1])$ be a finite collection of non-singular strict contractions, where $1\leq r\leq \infty$ or $r=\omega$. The action on $[0,1]$ generated by arbitrary composition of the elements in $\Phi$ is called a \textit{self-conformal IFS} (Iterated Function System). It is well known that there exists a unique compact non-empty subset $X=X_\Phi\subseteq [0,1]$, called the attractor of $\Phi$, that satisfies
$$X=\bigcup_{i=1} ^n f_i (X).$$
We say that $\Phi$ is $C^r$-conjugated to another IFS $\widetilde{\Phi}$ if  there exists a $C^r$ diffeomorphism $h$ on $[0,1]$ such that
$$\widetilde{\Phi} = \lbrace h\circ f_i \circ h^{-1} \rbrace_{i=1} ^n.$$
Finally, $\Phi$ is called \textit{self-similar} if $\Phi$ is fully made up of affine maps. See e.g. Bishop-Peres \cite{bishop2013fractal} for a standard introduction, and B\'ar\'any-Simon-Solomyak \cite[Chapter 14]{Barany2023Simon} for a more recent account.

We are interested in the following question of Hochman\footnote{This problem was posed to the first named author when he was a graduate student at the Hebrew University of Jerusalem in the Winter of 2014.}.
\begin{Question} \label{Question_Hochman}
Let $2\leq r \leq \infty$ and let $\Phi \subseteq C^r([0,1])$ be a self-conformal IFS. Is it possible that $\Phi$ is \emph{linear}, in the sense that  $f'' (x) = 0$ for all $x\in X_\Phi$ and $f\in \Phi$, yet $\Phi$ is not $C^r$ conjugate to a self-similar IFS? 
\end{Question}
Question \ref{Question_Hochman} is motivated by   methods Hochman devised with Shmerkin to study normal numbers in fractals \cite[see comments made during the proof of Theorem 1.5]{hochmanshmerkin2015}; By the study of self-symmetries of self-similar sets by Elekes-Keleti-M{\'a}th{\'e} \cite{elekes2010self}; By the work of Bedford-Fisher \cite{BedfordFisher1997Ratio} about upgrading the smoothness of conjugating maps between hyperbolic Cantor sets (to be defined soon), which is closely related with the work of Sullivan \cite{Sullivan1987ratio} and his scaling function, among others. We will come back to this last point below. Our interest in this Problem was rekindled by recent progress on the study of Fourier dimension of self-conformal sets. More specifically, the very same linearity condition as in Hochman's Question \ref{Question_Hochman} was shown in \cite{algom2021decay} to imply that all self-conformal measures, a certain family of stationary measures on $X_\Phi$, have logarithmic Fourier decay.  In fact, for $C^2 (\mathbb{R})$ smooth IFSs it was later shown that being $C^2$-conjugated to linear (recall our definition of linear from Question \ref{Question_Hochman}) is the \textit{only} possible obstruction for $X_\Phi$ to having positive Fourier dimension \cite{algom2023polynomial, baker2023spectral}. This essentially resolved the Fourier decay problem for smooth non-linear IFSs. We will return to this issue and the implication our analysis has on the Fourier decay problem after we state and discuss our main results.

In this paper we give a full answer to Question \ref{Question_Hochman}, and study further dynamically flavoured problems that naturally arise from our analysis. Since our main concern is with the construction of the examples in Theorem \ref{Theorem A} - part b., we will restrict our analysis to the case where the IFSs have only two generators, which is enough to answer Question \ref{Question_Hochman}. We will work with maps in regularity $C^1$, regularity $C^{s,\alpha}$ for $1\leq s<\infty$ and $0<\alpha\leq 1$, regularity $C^\infty$ and regularity $C^\omega$. The index $s=1$, $s=1+\alpha$, $s=\infty$ or $s=\omega$ will be called the regularity of the map. See Section \ref{sec_preliminaries} for definitions.

To describe our results, following e.g. Bedford-Fisher \cite{BedfordFisher1997Ratio}, let us recall the definition of a hyperbolic IFS. A self-conformal IFS $\Phi=\lbrace f_0, f_1 \rbrace \subset C^{s} ([0,1])$, where $1\leq s\leq\infty$ or $s=\omega$, is called a \emph{hyperbolic IFS} if 
$0<\inf_{t\in[0,1]}\left(|f_i'(t)|\right)\leq \sup_{t\in[0,1]}\left(|f_i'(t)|\right) <1$ for both $i=0,1$ and
$$0=f_0(0)<f_0(1)<f_1(0)<f_1(1)=1.$$
Up to an affine change of coordinates, this is equivalent to the so-called  convex strong separation condition  - see \cite[Section 3]{feng2014self}. Note that this property implies that the attractor $X_\Phi$ is a Cantor set of zero Lebesgue measure. We also introduce a new notion, that will accompany us throughout the paper: A self-conformal IFS $\Phi\subseteq C^1([0,1])$ is called \textit{pseudo-affine with slope} $\lambda$ (or just pseudo-affine if we don't care about the slope) if:
\begin{equation} \label{defn_pseudo-affine_IFS}
f'(x)=\lambda \text{ for all } x\in X_\Phi \text{ and all } f\in \Phi.
\end{equation}
Note that if $\Phi\subseteq C^2([0,1])$ and is pseudo-affine then $\Phi$ is linear in the sense of Question \ref{Question_Hochman}. Indeed, this follows from pseudo-affinity and the fact that $X_\Phi$ is a perfect compact set. It is also interesting to note that the slope $\lambda$ can only take values in the set $0<\lambda<1/2$ (cf. proof of Claim  \ref{claim_1-prop_classification_2}).

We are now ready to state our first main result which answers Question \ref{Question_Hochman}:
\begin{thm} \label{Theorem A}
For every parameter $0<\lambda<1/2$ and $1\leq s\leq \infty$, there exist hyperbolic pseudo-affine IFSs $\Phi \subseteq C^s([0,1])$ with slope $\lambda$, satisfying that $\Phi \not \subseteq C^t([0,1])$ for every $s<t$. In the case $s=\infty$, this means that there exist such IFSs that are $C^\infty$ but not real analytic. Moreover, in each degree of regularity $1\leq s\leq\infty$, the following is true:
\begin{enumerate}[a.]
\item There exist examples of these IFSs that are $C^s$-conjugated to self-similar;
\item There exist examples of these IFSs that are not $C^s$-conjugated to self-similar.
\end{enumerate}
\end{thm}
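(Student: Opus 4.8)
The plan is to work with a fixed conjugacy model and build the examples by prescribing the derivative cocycle directly. Start from the self-similar IFS $\Phi_0 = \{g_0, g_1\}$ with $g_0(x) = \lambda x$ and $g_1(x) = \lambda x + (1-\lambda)$, whose attractor $X_0$ is the standard Cantor set $C_\lambda$ with ratio $\lambda$; the hyperbolicity and gap conditions are automatic from $0<\lambda<1/2$. Any IFS $\Phi = h \circ \Phi_0 \circ h^{-1}$ obtained from a $C^s$-diffeomorphism $h$ of $[0,1]$ is then hyperbolic, and it is pseudo-affine with slope $\lambda$ precisely when $h'(g_i(y)) \lambda = \lambda\, h'(y)$, i.e. $h'$ is constant along the two branches of $X_0$, for $y \in X_0$ — equivalently $h'$ is constant on $X_0$. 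So the whole construction reduces to: exhibit diffeomorphisms $h \in C^s([0,1])$, not in $C^t$ for any $t>s$, with $h'|_{X_0}$ constant (WLOG $\equiv 1$); for part (a) any such $h$ that is genuinely $C^s$-conjugate to self-similar works tautologically (take $h$ itself as the conjugacy), while for part (b) we must arrange in addition that $\Phi = h\circ\Phi_0\circ h^{-1}$ admits \emph{no} $C^s$ conjugacy to any self-similar system.

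For the regularity-versus-non-regularity part I would build $h$ as $h(x) = x + \sum_{k} a_k\, \psi_k(x)$, a lacunary-type series where each $\psi_k$ is a smooth bump supported in a distinct gap of $C_\lambda$ (so $\psi_k$ and all its derivatives vanish on $X_0$, guaranteeing $h'|_{X_0} \equiv 1$ and $h$ being a diffeomorphism once the $a_k$ are small), with the scales and amplitudes tuned so that $\sum a_k \psi_k$ is exactly $C^s$ but not $C^t$ for $t>s$ — a standard Weierstrass/Hardy-type construction, carried out scale-by-scale using the self-similar geometry of the gaps. For $s=\infty$ one arranges instead a $C^\infty$ but non-real-analytic $h$, e.g. by making the Taylor coefficients at some accumulation point of the gap endpoints grow faster than any geometric rate. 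This handles the existence of pseudo-affine $\Phi \subseteq C^s \setminus \bigcup_{t>s} C^t$ with slope $\lambda$.

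For part (a) one simply notes the trivial examples above are $C^s$-conjugate to $\Phi_0$ by construction; to make them nontrivial (genuinely non-affine maps) one just checks $h$ is not affine. For part (b), the key is to obstruct \emph{every} $C^s$ conjugacy, not just the obvious one. Here I would use a rigidity/matching invariant: if $\Phi = h\circ\Phi_0\circ h^{-1}$ were $C^s$-conjugate to a self-similar $\Psi$ via some $\phi$, then $\phi \circ h$ would conjugate $\Phi_0$ to $\Psi$ and hence (by the expected classification result for self-similar-to-self-similar $C^s$ conjugacies, which forces $\phi\circ h$ to be affine on a neighbourhood of $X_0$, via a Livšic/cocycle-cohomology argument on the derivative cocycle of $\Phi_0$) $h$ would have to agree with an affine map to infinite order along $X_0$ — in particular all higher "transverse" derivatives of $h$ at points of $X_0$ would be forced. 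So I would choose the bumps $\psi_k$ and amplitudes $a_k$ so that $h$, while $C^s$ and $C^1$-trivial on $X_0$, has a prescribed non-vanishing pattern of higher jets at $X_0$-points (for $s\geq 2$ this is a genuine obstruction; for $s=1$ one needs the separate phenomenon promised in the abstract and handled by the $C^1$ counterexample), violating that forced affinity. The main obstacle, and the part requiring the most care, is precisely this step: proving that no $C^s$ conjugacy whatsoever can exist — i.e. setting up the right conjugacy-invariant (a scaling-function / jet-matching invariant along the Cantor set in the spirit of Sullivan and Bedford–Fisher) and showing the constructed $h$ can be tuned to make that invariant nontrivial while keeping pseudo-affinity and the sharp regularity; everything else is routine bump-function bookkeeping on the self-similar gap structure.
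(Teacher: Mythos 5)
There is a fatal self-contradiction in your treatment of part (b). Your entire construction is the ansatz $\Phi = h\circ\Phi_0\circ h^{-1}$ with $h$ a $C^s$-diffeomorphism of $[0,1]$. Any such $\Phi$ is, by definition, $C^s$-conjugate to the self-similar system $\Phi_0$ (the conjugacy is $h^{-1}$), so no choice of bumps, amplitudes or ``higher jets'' can ever make it satisfy part (b). The jet/rigidity obstruction you propose would, if it worked, prove that your own $h^{-1}$ does not exist. The point of part (b) is precisely that the examples cannot be written as $h\circ\Phi_0\circ h^{-1}$ for \emph{any} $C^1$ (hence, by Bedford--Fisher--Sullivan rigidity, any $C^s$) diffeomorphism $h$: their attractors are not $C^1$-diffeomorphic to the standard Cantor set and their scaling functions are non-constant. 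For this reason the paper does not conjugate a model system at all; it builds the Cantor set and the IFS directly from prescribed gap data. One fixes the dynamical proportions $\lambda_i(w)=|I_{iw}|/|I_w|=\lambda+\theta_i(w)$, reconstructs a zero-measure Cantor set from them (Lemma \ref{lemma_Cantor_proportions}), and then builds $f_i$ by integrating a derivative that equals $\lambda$ on the Cantor set and is perturbed by rescaled bumps of mass $\theta_i(w)|I_w|$ in each gap (Proposition \ref{prop_classification_3}). Conjugacy to self-similar is then characterized by convergence of the ratio cocycle $\Psi_{(\theta_0,\theta_1)}(a_1\cdots a_n)/\Psi_{(0,0)}(a_1\cdots a_n)$ along every itinerary (Theorem \ref{thm_B_conjugation}); for part (b) one takes $\theta_1((01)^k)=\varepsilon_k$ and $\theta_i=0$ otherwise, so that this ratio oscillates along $a=(01)^\infty$ and no $C^1$ conjugacy to self-similar can exist.

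A secondary gap: for the sharpness statement ($\Phi\not\subseteq C^t$ for $t>s$) you argue that $h$ is exactly $C^s$, but the regularity in question is that of the maps $f_i=h\circ g_i\circ h^{-1}$, and non-smoothness of $h$ need not survive the composition. You need an intrinsic, conjugation-free criterion on the attractor itself; the paper supplies one in Proposition \ref{prop_calssification_2}, namely that $C^{r,\alpha}$ regularity forces $|\theta_i(w)|\leq C|I_w|^{r-1+\alpha}$, and the sequences $\varepsilon_n$ are chosen recursively so that this bound is saturated at exponent $s-1$ and violated for every $t>s$. Your part (a) ansatz is salvageable in principle (those examples are genuinely of the form $h\circ\Phi_0\circ h^{-1}$), but it still needs this step, and part (b) needs a different construction altogether.
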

The examples found in Theorem \ref{Theorem A} - part b. answer in full Hochman's Question \ref{Question_Hochman}. Let's explain another question coming from the works of Bedford-Fisher and Sullivan, that is related to Theorem \ref{Theorem A} - part a. We refer to Section \ref{sec_preliminaries} for an account on the concepts used here. The invariant Cantor set of a hyperbolic IFS is called a \emph{hyperbolic Cantor set}. Determining whether two hyperbolic IFSs $\Phi$ and $\widetilde{\Phi}$ of class $C^s$ are $C^s$-conjugated is equivalent to determine whether the Cantor sets $X_\Phi$ and $X_{\widetilde{\Phi}}$ are $C^s$-diffeomorphic, under an orientation preserving diffeomorphism of the interval. This can be completely understood in the case of regularity $s=1+\alpha$ with $0<\alpha\leq 1$, due to an invariant called \emph{scaling function}. The fact that these invariant Cantor sets $X$ are hyperbolic attractors of systems of class $C^{1,\alpha}$ imply that their fine scale structure can be encoded in a $\alpha$-{H}\"older function $\rho:X^*\to\Delta$, called the \emph{scaling function of $X$}, with domain in the \emph{dual Cantor set of $X$} and range in the simplex $\Delta=\{(t_1,t_2,t_3)\in\R^3:0\leq t_i\leq 1,\ t_1+t_2+t_3=1\}$. It turns that \emph{the scaling functions are a complete invariant of $C^{1,\alpha}$-conjugacy classes of hyperbolic Cantor sets of class $C^{1,\alpha}$. }
%That is: (i)- two hyperbolic Cantor sets of class $C^{1,\alpha}$ are $C^{1,\alpha}$-conjugated if and only if they have the same scaling functions; and (ii)- for every $\alpha$-H\"older function $\rho:X^*\to\Delta$ there is an associated class of hyperbolic Cantor sets. 
The problem of $C^s$-classification becomes more subtle when we increase the degree of regularity to $s\geq 2$. Since the Cantor sets $X_\Phi$ and $X_{\widetilde{\Phi}}$ are in particular of class $C^{1+\alpha}$ then they have well-defined scaling functions $\rho_\Phi$ and $\rho_{\widetilde{\Phi}}$, and hence they are $C^{1+\alpha}$ conjugated if and only if $\rho_\Phi=\rho_{\widetilde{\Phi}}$. In \cite{BedfordFisher1997Ratio}-\cite{Sullivan1987ratio} it is proved the following result, known as \emph{rigidity}: \emph{Two hyperbolic Cantor sets with regularity $s>1$ are $C^1$-diffeomorphic if and only if they are $C^s$-diffeomorphic.} It follows that, in regularity $s\geq 2$, the association of its scaling function to any $C^s$-hyperbolic Cantor set, is an injective association.

It is less understood, however, which are the $\alpha$-{H}\"older functions $\rho:X^*\to\Delta$ that can occur as the scaling function of a hyperbolic Cantor set $X$ of class $C^s$, for a given $s\geq 2$. More precisely, following \cite{BedfordFisher1997Ratio} and \cite{Sullivan1987ratio}, the question that arises is the following:

\begin{Question}
Let $\rho:X^*\to\Delta$ be the scaling function of a hyperbolic Cantor set $X$ of class $C^{1,\alpha}$ for some $\alpha>0$, which is an $\alpha$-{H}\"older function. Given $s\geq 2$, which properties of $\rho$ ensures that $X$ has regularity at least $s$ ? For example, if $\rho$ is constant, does it ensures that $X$ is real analytic ?
\end{Question}

A first result in this direction can be found in \cite{TP}, where it is shown that a bound on the regularity of the scaling function imposes a bound on the actual regularity of $X$. On the other hand, the examples in Theorem A - part a. show that a high degree of regularity of the scaling function (constant in this case, since these IFSs are conjugated to self-similar) does not imply any kind of regularity of the Cantor set, other than $s=1+\alpha$. More precisely, given $0<\lambda<1/2$, denote by $X_\lambda$ the self-similar Cantor set of slope $=\lambda$, which has constant scaling function $\rho_\lambda=(\lambda,1-2\lambda,\lambda)$. For every $1\leq s\leq\infty$ or $s=\omega$, let $\mathcal{E}_\lambda^s$ be the set of $C^s$-equivalence classes of hyperbolic Cantor sets of class $C^s$, having constant scaling function $=\rho_\lambda$. Then, the examples in Theorem A - part a. show that:

\begin{cor}
It is verified that $\mathcal{E}_\lambda^\omega\subsetneq\mathcal{E}_\lambda^\infty\subsetneq\mathcal{E}_\lambda^t\subsetneq\mathcal{E}_\lambda^s\subsetneq\mathcal{E}_\lambda^1$, for all $1<s<t<\infty$.
\end{cor}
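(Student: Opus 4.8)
The plan is to deduce the Corollary directly from Theorem A, using the rigidity statement quoted from \cite{BedfordFisher1997Ratio}-\cite{Sullivan1987ratio} to control the inclusions. First I would observe that each $\mathcal{E}_\lambda^s$ naturally includes into $\mathcal{E}_\lambda^{s'}$ whenever $s>s'$: a hyperbolic Cantor set of class $C^s$ is in particular of class $C^{s'}$, and two such sets that are $C^s$-diffeomorphic are a fortiori $C^{s'}$-diffeomorphic, so the assignment of the $C^{s'}$-class to the $C^s$-class is well defined. One must also check injectivity of these maps (so that the inclusions are genuine set inclusions of classes), and here is where the rigidity theorem enters: for $s,s'\geq 2$, two $C^s$-hyperbolic Cantor sets that are $C^{s'}$-diffeomorphic (equivalently $C^1$-diffeomorphic, by rigidity) are already $C^s$-diffeomorphic, so the natural map $\mathcal{E}_\lambda^s\hookrightarrow\mathcal{E}_\lambda^{s'}$ is injective; for $s'=1$ and $s\geq 2$ the same rigidity gives injectivity of $\mathcal{E}_\lambda^s\hookrightarrow\mathcal{E}_\lambda^1$. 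Only the inclusion $\mathcal{E}_\lambda^t\subseteq\mathcal{E}_\lambda^s$ for $1<s<t$ needs no extra care beyond rigidity between regularities $\geq 2$; one should note explicitly that $\mathcal{E}_\lambda^{1+\alpha}$ for $\alpha>0$ is a single point (the class of $X_\lambda$) by the completeness of the scaling function as a $C^{1,\alpha}$ invariant, which pins down the left-most nontrivial part of the chain and is worth stating even though the Corollary as phrased only compares $\mathcal{E}_\lambda^s$ for $s>1$.

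Next I would establish that each inclusion is \emph{proper}. This is exactly the content of Theorem A. Fix $1<s<t<\infty$. By Theorem A applied with regularity $s$, there exists a hyperbolic pseudo-affine IFS $\Phi\subseteq C^s([0,1])$ of slope $\lambda$ with $\Phi\not\subseteq C^{t'}([0,1])$ for every $t'>s$; in particular $\Phi\not\subseteq C^t([0,1])$. Since $\Phi$ is pseudo-affine of slope $\lambda$ it is linear, hence $X_\Phi$ has constant scaling function $\rho_\lambda=(\lambda,1-2\lambda,\lambda)$ — this is a short computation from the definition of the scaling function together with $f'\equiv\lambda$ on $X_\Phi$, which I would record as a sentence. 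Thus the $C^s$-class of $X_\Phi$ lies in $\mathcal{E}_\lambda^s$. If that class also lay in $\mathcal{E}_\lambda^t$, then $X_\Phi$ would be $C^t$-diffeomorphic to some $C^t$-hyperbolic Cantor set $Y$; by the rigidity theorem applied to the pair $(X_\Phi,Y)$ in regularity $t$, being $C^1$-diffeomorphic would upgrade to $C^t$, but more usefully we can invoke the Bedford–Fisher upgrading statement in the form: a $C^1$ conjugacy onto a $C^t$ system forces the source to be $C^t$-conjugate to that system, hence $C^t$-smooth, contradicting $\Phi\not\subseteq C^t([0,1])$. (Alternatively, phrase this via the IFS language of the introduction: $X_\Phi$ is the attractor of $\Phi$, and $C^t$-diffeomorphism of attractors is $C^t$-conjugacy of IFSs, so $\Phi$ would be $C^t$, a contradiction.) Therefore $\mathcal{E}_\lambda^t\subsetneq\mathcal{E}_\lambda^s$.

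For the two endpoint strict inclusions, $\mathcal{E}_\lambda^\infty\subsetneq\mathcal{E}_\lambda^t$ for finite $t>1$ follows the same way using Theorem A with $s=\infty$: it produces a $C^\infty$ but non-analytic hyperbolic pseudo-affine IFS of slope $\lambda$, whose Cantor set has constant scaling function $\rho_\lambda$ and hence lies in $\mathcal{E}_\lambda^\infty$; but one needs its class to be \emph{outside} $\mathcal{E}_\lambda^t$ for every finite $t$ — which is false, a $C^\infty$ set is $C^t$ for all finite $t$. So here the separation goes the other direction: I should instead take, for the properness of $\mathcal{E}_\lambda^\infty\subsetneq\mathcal{E}_\lambda^t$, an IFS that is $C^t$ but not $C^{t'}$ for $t'>t$ (Theorem A with $s=t$, taking $t$ itself finite $>1$), which gives a class in $\mathcal{E}_\lambda^t\setminus\mathcal{E}_\lambda^\infty$; and for $\mathcal{E}_\lambda^\omega\subsetneq\mathcal{E}_\lambda^\infty$, take the $C^\infty$ non-analytic example, giving a class in $\mathcal{E}_\lambda^\infty\setminus\mathcal{E}_\lambda^\omega$. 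In each case the argument that the exhibited class is \emph{not} in the smaller set is the rigidity-plus-upgrading contradiction above: membership in the smaller set would force a higher degree of smoothness on $\Phi$ than it has. The main obstacle I anticipate is purely bookkeeping: making sure the direction of each strict inclusion is matched with the correct instance of Theorem A (the ``$C^s$ but not $C^{t>s}$'' phenomenon witnesses $\mathcal{E}_\lambda^{s'}\subsetneq\mathcal{E}_\lambda^s$ for $s'>s$, not the reverse), and being careful that the quoted rigidity theorem is exactly what converts ``$C^1$-diffeomorphic'' into ``$C^s$-smooth source'' so that the contradiction with Theorem A is legitimate. No new mathematics beyond Theorem A and the cited rigidity is required.
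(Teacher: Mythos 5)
Your overall strategy --- properness of each inclusion is witnessed by the examples of Theorem \ref{Theorem A}, part a., of exact regularity $s$ --- is the same as the paper's (the paper offers essentially no more than this one line, presenting the Corollary as a direct consequence of those examples). But two steps as you have written them do not go through. First, the assertion that pseudo-affinity of slope $\lambda$ forces the scaling function of $X_\Phi$ to be the constant $\rho_\lambda$ is false: the scaling function is built from ratios of diameters of cylinders, not from $f'|_{X_\Phi}$, and the paper points out explicitly that the part b.\ examples are pseudo-affine yet have \emph{non-constant} scaling function (that is precisely how one sees they are not conjugate to self-similar). You must take the part a.\ examples and deduce constancy of the scaling function from the $C^s$-conjugacy to the self-similar model (or from the symmetry $\theta_i(w)=\varepsilon_{|w|}$ of that construction), not from linearity.

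Second, and more seriously, your framework of equivalence classes with injective transition maps is self-defeating. You yourself note that the scaling function is a complete $C^{1,\alpha}$-invariant, so $\mathcal{E}_\lambda^{1+\alpha}$ is a single point; combined with the rigidity theorem you invoke for injectivity, this forces every $\mathcal{E}_\lambda^s$ with $s>1$, read as a set of equivalence classes, to be the singleton $\{[X_\lambda]\}$ --- in particular the class of your witness $X_\Phi$ \emph{equals} the class of the analytic set $X_\lambda$ and therefore does lie in the image of $\mathcal{E}_\lambda^\omega$, so no proper inclusion can be extracted this way. Your contradiction step conflates ``the $C^s$-class of $X_\Phi$ contains a $C^t$ representative'' (true: it contains $X_\lambda$) with ``$X_\Phi$ itself is of class $C^t$'' (false). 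The Corollary is only non-vacuous if $\mathcal{E}_\lambda^s$ is read as the set of Cantor sets of class $C^s$ with scaling function $\rho_\lambda$ (equivalently, the single $C^s$-class of $X_\lambda$ viewed as a set of Cantor sets), with literal set inclusion; and then the statement ``$X_\Phi\notin\mathcal{E}_\lambda^t$'' means that \emph{no} $C^t$ IFS has $X_\Phi$ as attractor. That does not follow merely from the given branches $f_i$ failing to be $C^t$; it follows from Proposition \ref{prop_calssification_2}, because the dynamical proportions $\theta_i(w)$ are intrinsic to the marked Cantor set and, for the part a.\ example of regularity $s$, decay too slowly for any $C^t$ realization. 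This intrinsic obstruction is the missing ingredient in your ``rigidity-plus-upgrading'' contradiction.
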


Theorem \ref{Theorem A}, and our analysis in general, can also be viewed as a refinement of a result of Bam\'on,  Moreira,  Plaza, and Vera \cite{Bamon1997gogo}. In that paper, the authors find explicit conditions for the characterization and classification of a particular class of $C^{s}$-regular hyperbolic Cantor sets, called \emph{central Cantor sets}. This is done in terms of the asymptotics of various products of the sizes of gaps removed in the construction of $X_\Phi$ as a Cantor set (see Section \ref{Section: binary cantor set} for more details). In particular, examples as those in our Theorem \ref{Theorem A} - part a. have been already produced in \cite{Bamon1997gogo}; They also produced such examples where the difference set can have any chosen volume measure; And, they classified their $C^s$-conjugacy classes.

Our major new contribution compared with \cite{Bamon1997gogo} is that we provide a mechanism that ensures $\Phi$ is linear yet not conjugate to self-similar. Our construction is carried out in terms of a pair of real functions on $\lbrace 0,1\rbrace^*$, called \emph{dynamical proportions}.  These functions codify the \emph{proportions} between different \emph{gaps} in the complement of the Cantor set. The crux of the matter is that, when a Cantor set has zero Lebesgue measure, then each pair of admissible dynamical proportions uniquely determine the Cantor set as a subset of the interval, and hence all the properties of the Cantor set are encoded in this pair of functions. For instance, pseudo-affinity can be characterized in these terms, and is equivalent to the condition that the dynamical proportions are asymptotically constant with respect to the length of the word. Moreover, the deviation from the limit constant $\lambda$ encodes precise information about the regularity of the IFS.

We will prove Theorem \ref{Theorem A} by showing first a complete classification of hyperbolic pseudo-affine IFSs of class $C^s$ up to $C^s$-conjugation, for every regularity index $1\leq s\leq\infty$. This is the content of Theorem \ref{Theorem 27} in Section \ref{sec_pseudo-affine_IFS}, and it is the heart  of the construction in this paper. To avoid technicalities, we skip its statement here and refer the reader directly to that section. It is interesting to note that for determining $C^s$-conjugation in regularity $1\leq s\leq\infty$, it is enough to check $C^1$-conjugation. This is because one can invoke once again the rigidity results of Bedford-Fisher and Sullivan, and obtain that a conjugation of class $C^1$ automatically upgrades to a conjugation of class $C^s$. Observe that the problem of $C^s$-classification is trivial when $s=\omega$ (due to analyticity).

In a second approach, we study the problem of whether a general hyperbolic IFS is smoothly conjugated to a pseudo-affine IFS. Consider a hyperbolic IFS $\widetilde{\Phi}= \lbrace g_0,g_1 \rbrace\subseteq C^s([0,1])$ where $s\geq 1$. A necessary condition for $\widetilde{\Phi}$ to be $C^s$-conjugated to a pseudo-affine IFS of slope $\lambda$ is the following: 
\emph{
For every $w\in \lbrace0,1\rbrace^*$ let $G_\omega = g_{\omega_{1}}\circ \circ \circ g_{\omega_n}$ be the corresponding cylinder map, and let $y_w \in X_{\widetilde{\Phi}}$ denote its unique fixed point; That is, $G_w(y_w)=y_w$. Then: 
\begin{equation} \label{Eq Livsic condition}
\text{ There exists } 0<\lambda<1/2 \text{ such that for all } w\in \lbrace 0,1\rbrace^* \text{ we have } G_w'(y_\omega)= \lambda^{|w|}.
\end{equation} 
}
We call \eqref{Eq Livsic condition} the \emph{Livsic Condition}, after the well known work of Livsic \cite{Livsic1972cohom} in dynamical systems. Roughly speaking, the Theorem of Livsic allows one to solve global cohomological equations from periodic data, for {H}\"older regular observables. This was brought into the context of smooth conjugation of Anosov systems by de la Llave - Moriy\'on \cite{Lave1988mori}. However, Bousch-Jenkinson \cite{Bousch2002Jen} have shown that this fails without {H}\"older regularity of the potential.

The following Theorem provides IFS-like analogues of both the Livsic Theorem, and  of its failure in lower regularity.

\begin{thm} \label{Threom C}
Let $\widetilde{\Phi}= \lbrace g_0,g_1 \rbrace\subseteq C^s([0,1])$ be a hyperbolic IFS where $s\geq 1$, that satisfies the Livsic condition \eqref{Eq Livsic condition}. Then:
\begin{enumerate}
\item For $s=1$ there exists such $\widetilde{\Phi}$ that is not $C^1$-conjugate to a pseudo-affine IFS. 
\item If $s>1$ then $\widetilde{\Phi}$ is $C^s$-conjugate to a pseudo-affine IFS of class $C^s$ with slope $\lambda$.  
\end{enumerate}
\end{thm}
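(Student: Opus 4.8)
The plan is to reduce both statements to the solvability, in the appropriate regularity, of a single cohomological equation over the expanding map of $\widetilde{\Phi}$. Let $T$ be this expanding map ($T|_{g_i(X_{\widetilde{\Phi}})}=g_i^{-1}$), coded by $\pi\colon\{0,1\}^{\mathbb N}\to X_{\widetilde{\Phi}}$, which intertwines the shift $\sigma$ with $T$. The first step is to record the following equivalence: if $h$ is an orientation‑preserving $C^t$ diffeomorphism of $[0,1]$ with $h(0)=0,\ h(1)=1$ ($t\ge1$), then $\{h\circ g_i\circ h^{-1}\}$ is pseudo‑affine with slope $\lambda'$ if and only if, writing $\psi=\log h'$, one has $\psi\circ T-\psi=\varphi$ on $X_{\widetilde{\Phi}}$, where $\varphi:=-\log\lambda'-\log T'$; this is the chain rule after the substitution $x=h(y)$. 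Evaluating the derivative of a cylinder map at its (conjugacy‑invariant) fixed point shows that the slope of any pseudo‑affine IFS conjugate to $\widetilde{\Phi}$ must equal the $\lambda$ of \eqref{Eq Livsic condition}, and that \eqref{Eq Livsic condition} is exactly the vanishing of the periodic obstruction $\sum_{k=0}^{n-1}\varphi(T^kp)=0$ for every $T$‑periodic $p$ (because $(T^n)'(y_w)=G_w'(y_w)^{-1}$). So everything reduces to solving $\psi\circ T-\psi=\varphi$ in the right class; the ambiguity $\psi\mapsto\psi+\mathrm{const}$ will be spent on normalization.

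For part (2), with $s>1$ we have $T\in C^s$ and $\varphi\in C^{s-1}(X_{\widetilde{\Phi}})$. When $s=1+\alpha$ I would note that $\varphi\circ\pi$ is Hölder on $\{0,1\}^{\mathbb N}$ (bounded distortion makes $\pi$ bi‑Hölder), the periodic obstruction vanishes, and hence the classical Livsic theorem for subshifts yields a Hölder solution, which pushes down to a $C^\alpha$ function $\psi$ on $X_{\widetilde{\Phi}}$. When $s\ge2$ (and $s=\infty$, $s=\omega$) I would upgrade this by a Livsic‑regularity argument à la de la Llave–Marco–Moriyón — bootstrapping the formal series $\psi=\sum_{n\ge0}\varphi\circ T^{n}$ against the contraction of the inverse branches, and a transfer‑operator/complexification argument in the analytic case — to conclude that the solution is in fact a $C^{s-1}$ Whitney field on $X_{\widetilde{\Phi}}$ (the $C^\infty$ case being the intersection of the finite ones). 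In all cases I would then extend $e^{\psi}$ to a strictly positive $C^{s-1}$ function $\rho$ on $[0,1]$ (Whitney extension, keeping positivity on the finitely many bounded gaps) and set $h(x)=\big(\int_0^1\rho\big)^{-1}\int_0^x\rho$. Then $h$ is a $C^s$ diffeomorphism of $[0,1]$ fixing $0$ and $1$, $\log h'$ differs from $\psi$ by a constant and so still solves the equation, hence $\{h\circ g_i\circ h^{-1}\}$ is a $C^s$ IFS with derivative identically $\lambda$ on $h(X_{\widetilde{\Phi}})$; the strong‑separation ordering is preserved by $h$, and a bounded‑distortion estimate (the derivative stays $\approx\lambda<1$ on small gaps, the rest by hand) shows it is genuinely hyperbolic. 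Alternatively, once a $C^1$‑conjugacy to \emph{some} $C^s$ pseudo‑affine IFS is available, the Bedford–Fisher–Sullivan rigidity quoted above upgrades it to $C^s$, so the only real content is the regularity of $\rho$.

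For part (1), by the equivalence above it suffices to build a $C^1$ hyperbolic $\widetilde{\Phi}$ for which $\varphi$ has vanishing periodic obstruction but admits \emph{no continuous} solution $\psi$ — indeed such a $\psi=\log h'$ would be forced to exist and be continuous for any $C^1$‑conjugacy to a pseudo‑affine IFS. Fixing $0<\lambda<1/2$, I would choose, following Bousch–Jenkinson \cite{Bousch2002Jen}, a small continuous $\varphi_0$ on $\{0,1\}^{\mathbb N}$ with $\sum_{k<n}\varphi_0(\sigma^kp)=0$ for every periodic $p$ but $\varphi_0\ne g\circ\sigma-g$ for every continuous $g$. Since $-\log\lambda>\log2$, for $\|\varphi_0\|_\infty$ small the prescription $\log T'=-\log\lambda-\varphi_0$ on the repeller can be realized (via the dynamical‑proportions parametrization, or a contraction‑mapping argument perturbing the self‑similar model) by a $C^1$ expanding map $T$ of two subintervals of $[0,1]$ onto $[0,1]$ whose inverse branches $\{g_0,g_1\}$ form a hyperbolic IFS (separation and $0<\inf|g_i'|\le\sup|g_i'|<1$ following from $\inf T'>1$). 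This $\widetilde{\Phi}$ satisfies \eqref{Eq Livsic condition} by construction, yet $\varphi=\varphi_0\circ\pi^{-1}$ has no continuous coboundary, so $\widetilde{\Phi}$ is not $C^1$‑conjugate to any pseudo‑affine IFS.

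The main obstacle is the regularity step inside part (2) for $s\ge2$: extracting a $C^{s-1}$ (Whitney) solution of the Livsic equation over an expanding Cantor map from merely a vanishing periodic obstruction, and then matching it with a Whitney extension that keeps $h$ a $C^s$ diffeomorphism and the conjugated system hyperbolic. The rest is either classical (the Hölder Livsic theorem) or a controlled construction (the $C^1$ counterexample and the elementary reductions).
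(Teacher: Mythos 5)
Your reduction of both parts to the cohomological equation $\psi\circ T-\psi=-\log\lambda-\log T'$ over the expanding map, and your identification of \eqref{Eq Livsic condition} with the vanishing of the periodic obstruction, are correct and consistent with what the paper does implicitly. The problem is in part (2) for $s\geq 2$, and it is exactly the step you yourself flag as ``the main obstacle'': you invoke a de la Llave--Marco--Moriy\'on-type bootstrapping to promote the H\"older Livsic solution to a $C^{s-1}$ Whitney field on the Cantor set, and then a positivity-preserving Whitney extension to make $h$ a $C^s$ diffeomorphism. That regularity theory is developed for Anosov systems on manifolds, where the solution lives on an open set; extracting Whitney $C^{s-1}$ jets (let alone the $C^\infty$ and $C^\omega$ cases) on a Cantor repeller from periodic data alone is not an off-the-shelf consequence of it, and nothing in your sketch closes this. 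As written, part (2) is only proved for $1<s<2$. The paper avoids the issue entirely: it linearizes \emph{only the branch} $g_0$ by the one-dimensional Poincar\'e--Siegel theorem, obtaining a $C^s$ diffeomorphism $h$ with $h\circ g_0\circ h^{-1}=E_\lambda$, and then shows (Proposition \ref{prop_fundamental_conjugation}) that for the conjugated system the Livsic condition forces $f_1'\equiv\lambda$ on the attractor as well. That proposition is elementary: an induction on the length of $w$ over the dense set of points $p_w=x_{w0^\infty}$, approximating each by periodic points of the form $x_{(w0^m1w)^\infty}$ and using continuity of $f_1'$ together with the chain rule. No cohomological equation is ever solved, and the $C^s$ regularity of the conjugacy comes for free from the one-dimensional linearization theorem.

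For part (1) your strategy (a Bousch--Jenkinson-type continuous $\varphi_0$ with vanishing periodic sums but no continuous coboundary, realized as $\log T'=-\log\lambda-\varphi_0$ on the repeller) is the same as the paper's in spirit, and your observation that a $C^1$ conjugacy to a pseudo-affine IFS would force $\log h'$ to be a continuous solution is exactly the contradiction the paper derives. However, the realization step is not as innocuous as ``the dynamical-proportions parametrization, or a contraction-mapping argument'': Proposition \ref{prop_classification_3} prescribes the \emph{averages} $\frac{1}{|I_w|}\int_{I_w}f_i'$, i.e.\ the gap proportions, not the pointwise values of $f_i'$ on $X$, so it cannot directly produce a system with $\log g_i'=\log\lambda+\varphi_0$ on the Cantor set. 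The paper carries this out by building the eigenmeasure $\mu$ of the transfer operator $L_{\phi-P(\phi)}$ (as a weak-$*$ limit of the measures $\widehat{\mu}_\epsilon$), taking $h^{-1}(z)=\mu([0,z])$, and then proving directly that $\widehat{T}=h^{-1}\circ T\circ h$ is differentiable with $\log\widehat{T}'=\log 3-\varphi\circ h$ even though $h$ itself is generically non-differentiable. Some such construction is needed to complete your argument; without it, part (1) is a correct plan with a missing existence proof.
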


We note that while Theorem \ref{Theorem A} does answer Question \ref{Question_Hochman}, there are some interesting problems that remain. For instance, we did not treat the case when the IFS has more than two generators, that has additional technicalities to deal with. Also, it would be interesting to understand more in detail how pseudo-affinity is manifested in terms of the scaling function. (Observe that the examples in Theorem \ref{Theorem A} - part b. are not $C^s$-conjugated to self-similar and thus their scaling functions are non-constant.)

We end this introduction with a few remarks about the implications Theorem \ref{Theorem A} has on the Fourier decay problem for fractal measures. Recall that a probability measure $\nu$ on $\mathbb{R}$ is called a Rajchman measure if its Fourier transform satisfies $\lim_{|q|\rightarrow \infty} \widehat{\nu}(q)=0$.  Combining recent breakthroughs on the Fourier decay problem \cite{bremont2019rajchman, varju2020fourier, li2019trigonometric, algom2020decay, algom2023polynomial, algom2024van, baker2024polynomial} we know that if a $C^\omega (\mathbb{R})$ self-conformal IFS admits a non-Rajchman self-conformal measure  then: The IFS must be self-similar; It must have contraction ratios that are all powers of some $r\in (0,1)$ such that $r^{-1}$ is a Pisot number; And, up to affine conjugation, all translates are in $\mathbb{Q}(r)$. Now, it is natural to ask about  analogues of this result in $C^r (\mathbb{R})$-regularity, $1\leq r\leq \infty$ - see e.g. \cite[Problem 3.14]{sahlsten2023fourier}. The contribution of Theorem \ref{Theorem A} to this line of research is that is shows the non-triviality of the problem: There are self-conformal measures that are neither self-similar, smoothly conjugate to self-similar, or non-linear. In particular, none of the methods in the cited papers can currently deal with the Rajchman problem for such measures.

\subsection{Organization of the paper}
In Section \ref{sec_preliminaries} we summarize preliminary content about regularity, Cantor sets and iterated function systems, that will be used throughout the text. In Section \ref{sec_pseudo-affine_IFS} we study the class of pseudo-affine IFS, providing a classification in Theorem \ref{thm_B_conjugation}. The main technical step is Proposition \ref{prop_classification_3}, that shows how to construct a Cantor set starting from its proportion functions. In Section \ref{sec_proof_Thm_A} we prove Theorem \ref{Theorem A}. The proof of Theorem \ref{Threom C} is given through Sections \ref{sec_Livshitz-I} and \ref{sec_Livshitz-II}.

\subsection*{Acknowledgements and Comments}
We wish to thank Jairo Bochi for useful discussions. We also thank Zhiren Wang for many helpful remarks. All authors are supported by Grant No. 2022034 from the United States - Israel Binational Science Foundation (BSF), Jerusalem, Israel; A.A. is also supported by  the Israel Science Foundation (Grant No. 392/25),   NSF-BSF Grant No. 2024692; F. RH. is also supported by NSF Grant no. 2453688.
Finally, we want to note that A. Erchenko and Y. Mazor have simultaneously and independently obtained results that are in connection with the ones presented in the current paper. 
 
%%%%%%%%%%%%%%%%%%%%%%%%%%%%%%%%%%%%%%%%%%%%%%%%%%%%%%%%%%%%%%%%%%%%%%%%%%%%%%%%%%%%%%%%%%%%%%%%%%%%%%%%%%%%%%%%%%%%%%%%%%%%%%%%%%%%%%%%%%%%%%%%%%%%

\section{Preliminaries} \label{sec_preliminaries}

\subsection{Regularity} \label{Section reg}
Given $0<\alpha\leq 1$, we say that a map $g:[0,1]\to\R$ is \emph{$\alpha$-{H}\"older} if there exists a constant $A>0$ such that 
$$|g(u)-g(t)|\leq A\cdot|u-t|^{\alpha},\, \forall\ 0\leq t,u\leq 1.$$
We call $\alpha$ and $A$ the \emph{{H}\"older exponent} and \emph{{H}\"older constant}, respectively. For $\alpha=1$ this condition is called \emph{Lipschitz}. Given an integer $r\geq 1$, we say that a map $g:[0,1]\to\R$ is of \emph{class $C^{r,\alpha}$} if its derivatives up to order $r$ exist and are continuous, and the $r$-th derivative $\partial^r g$ is an $\alpha$-{H}\"older function. We call the parameter $s=r+\alpha\in[1,+\infty)\cup\{\infty\}$ the \emph{regularity} of $g$ (where $s=\infty$ stands for \emph{smooth}). We will simply say that $g$ is of class $C^s$. However, observe that a map of class $C^{r,1}$ has regularity $s=r+1$ although it does not mean it is of class $C^{r+1,0}$.  

If $X\subset[0,1]$ is a non-empty closed subset, \emph{a map $g:X\to\R$ is of class $C^s$ in the sense of Milnor} if for every $x\in X$ there exists an open neighborhood $U$ of $x$ and a map $G:U\to\R$ of class $C^s$ such that $G(z)=g(z)$, $\forall\ z\in U\cap X$. By compactness, it is possible to see that there always exists a $C^s$-extension $G:U\to\R$ of $g:X\to\R$ on an open neighborhood $U$ including $X$, and that $U$ can be taken to be $U=[0,1]$.   

\subsection{Binary Cantor Sets and Dynamical Proportions} \label{Section: binary cantor set}
Consider the product space $\{0,1\}^{\N}$ endowed  with the product topology, and with the lexicographic order induced by declaring $0<1$. A \emph{binary Cantor set} $X\subseteq [0,1]$  is the image of an \emph{order preserving} (with respect to the natural order on $[0,1]$) continuous embedding $\{0,1\}^{\N}\to[0,1]$, such that
 $$(0)_{n\geq 1}\in \{0,1\}^{\N} \mapsto 0\in [0,1], \text{ and } (1)_{n\geq 1}\in \{0,1\}^{\N} \mapsto 1\in [0,1].$$ 
Given  $a=(a_n)_{n\geq 1} \in \{0,1\}^{\N}$  we denote by $x_a$ its image in $[0,1]$. We emphasize that, in general, $X$  need not be a self-conformal set.

Let $\mathcal{W} := \lbrace 0,1\rbrace^*$ be the words in the alphabet $\{0,1\}$, and denote the \emph{empty word} by $e$. For every $w=w_1\cdots w_n\in \mathcal{W}$ we denote by $X_w$ the \emph{cylinder} determined by $w$, which is defined by
$$X_w := \lbrace x_a\in X:\, a_i =w_i\, \forall i=1,\dots,n\rbrace.$$
Define $K_w:=\textsl{Conv}(X_w)$, the \emph{convex hull} of the cylinder $X_w$. Then $K_w\subset[0,1]$ is a closed interval. We can thus write
$$X=\bigcap_{n\geq 0}\left(\bigcup_{|w|=n}K_w \right).$$
The connected components of $[0,1]\setminus X$ are open intervals called \emph{gaps}. We label the gaps via \newline $\{I_w:w\in\mathcal{W}\}$, which is given by setting 
$$I_w=K_w\setminus \left( K_{w0}\cup K_{w1} \right), \text{ for every } w\in\mathcal{W}.$$  

We will define a total order $\prec$ on the set $\mathcal{W}\cup\{0,1\}^\N$ by declaring that $w0(1)^\infty\prec w\prec w1(0)^\infty$, for every finite word $w=w_1\cdots w_n$. If $X$ is any binary Cantor set in $[0,1]$ with cylinders and gaps labeled as above, then  for every $u,w\in\mathcal{W}$ and  $a,b\in\{0,1\}^\N$, 
\begin{align*}
\label{lemma_order}
& u \prec w\ \text{if and only if the gap}\ I_u\ \text{is on the left of the gap}\ I_w, \\
& a \prec b\ \text{if and only if}\ x_a<x_b, \\
& w \prec a\ \text{if and only if the gap}\ I_w\ \text{is on  the left of the point}\ x_a, \\
& a \prec w\ \text{if and only if the point}\ x_a\ \text{is on the left of the gap}\ I_w.
\end{align*}

We are interested in the following quantities associated to a binary Cantor set, that encode information about its geometric structure. 
\begin{defn}
\label{defn_dynamical_proportions}
Given a binary Cantor set $X\subset[0,1]$ its associated pair of \emph{dynamical proportions} are the functions $\lambda_i:\W\to(0,+\infty)$, $i=0,1$ defined by 
$$\lambda_i(w)\coloneqq \frac{|I_{iw}|}{|I_w|},\ \forall\ w\in\W,\ \text{where}\ |I_w|\ \text{denotes the length of the gap}\ I_w\subset[0,1].$$
%where $|I_w|$ denotes the length of the gap $I_w\subset[0,1]$.
\end{defn}
We remark that, while different, this definition is motivated by Sullivan's scaling function \cite{Sullivan1987ratio, BedfordFisher1997Ratio}. The pair of dynamical proportions associated to a binary Cantor set will be our main tool to construct IFSs and determine their regularity in Section \ref{sec_pseudo-affine_IFS}. We proceed to prove Lemma \ref{lemma_Cantor_proportions} below, that will be used in the next section.

\begin{defn}
Given a pair of functions $\lambda_i:\W\to(0,+\infty)$, $i=0,1$, its associated \emph{cocycle} is the function $\Psi:\mathcal{W}\to(0,+\infty)$ given by the expression 
\begin{equation}
\label{equation_defn_Psi}
\Psi(w_1\cdots w_n)=\left(\prod_{i=1}^{n-1}\lambda_{w_{i}}(w_{i+1}\cdots w_n)\right)\cdot\lambda_{w_n}(e),\ \forall\ w\in\mathcal{W}.
\end{equation}  
\end{defn}
It is direct to check that, in the case where $\{\lambda_0,\lambda_1\}$ are the dynamical proportions of a binary Cantor set, then 
$|I_w|=\Psi(w)\cdot |I|$ $\forall\ w\in\W$, where $|I|$ is the length of the gap labeled with the empty word $e$. Observe that if $X$ has \emph{zero Lebesgue measure} then the sum of the length over all the gaps equals one and hence
\begin{equation}
\label{equation_sum_Psi}
\sum_{w\in\mathcal{W}}\Psi(w)=1/|I|>1. 
\end{equation}

\begin{lem}
\label{lemma_Cantor_proportions}
Let $\lambda_i:\mathcal{W}\to(0,+\infty)$, $i=0,1$ be any pair of functions and let $\Psi:\mathcal{W}\to(0,+\infty)$ be defined by the formula in equation \eqref{equation_defn_Psi} above. If  
$$1<\sum_{w\in\mathcal{W}}\Psi(w)<+\infty$$
then there is a unique zero Lebesgue measure binary Cantor set $X$ such that  $\{\lambda_0,\lambda_1\}$  are its pair of dynamical proportions. Furthermore, the length of the gaps in $X$ are given by the formula 
$$|I_w|=\Psi(w)\cdot\left(\sum_{u\in\mathcal{W}}\Psi(u)\right)^{-1},\ \forall\ w\in\mathcal{W}.$$
\end{lem}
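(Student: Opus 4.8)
The plan is to build the binary Cantor set $X$ directly from the data $\{\lambda_0,\lambda_1\}$ by prescribing, for each word $w\in\mathcal{W}$, the length $\ell_w := \Psi(w)\cdot S^{-1}$, where $S := \sum_{u\in\mathcal{W}}\Psi(u)\in(1,\infty)$, and then laying down the gaps $I_w$ inside $[0,1]$ in the order dictated by the total order $\prec$ on $\mathcal{W}$. The key identity to keep in mind is the recursion $\Psi(iw)=\lambda_i(w)\,\Psi(w)$ for $i\in\{0,1\}$, which is immediate from the defining formula \eqref{equation_defn_Psi} (and $\Psi(i)=\lambda_i(e)$ for single letters); this is exactly what will force $\{\lambda_0,\lambda_1\}$ to be the dynamical proportions of the resulting set, since $|I_{iw}|/|I_w| = \ell_{iw}/\ell_w = \Psi(iw)/\Psi(w) = \lambda_i(w)$.

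First I would construct the cylinders' convex hulls $K_w$ by downward recursion on $|w|$. Set $K_e=[0,1]$. Given $K_w=[a_w,b_w]$ already defined with length $|K_w| = \ell_w + \sum_{v}\ell_{wv}$ (sum over nonempty words $v$), subdivide it from left to right into three consecutive pieces: $K_{w0}$ of length $\sum_{v}\ell_{w0v}+\ell_{w0}$... more precisely define $m_w := \sum_{\text{words }v}\ell_{wv}$ (including $v=e$, i.e. $m_w=\ell_w+\sum_{v\ne e}\ell_{wv}$), so that $|K_w| = m_w$ should hold, and then place $K_{w0}=[a_w,\,a_w+m_{w0}]$, then the gap $I_w=[a_w+m_{w0},\,a_w+m_{w0}+\ell_w]$, then $K_{w1}=[a_w+m_{w0}+\ell_w,\,b_w]$. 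For this to be consistent one needs the additivity $m_w = m_{w0}+\ell_w+m_{w1}$, which holds because every nonempty word extending $w$ is either $w0$, $w1$, or an extension of one of those — a bijection giving the partition of the index set. One also needs $|K_e|=m_e=1$, i.e. $\sum_{v\ne e}\ell_v + \ell_e = S\cdot S^{-1}=1$; here I must be careful about whether the empty word indexes a gap. Reading the construction $I_w=K_w\setminus(K_{w0}\cup K_{w1})$, the empty word does index a gap $I_e$ sitting between $K_0$ and $K_1$, so $\sum_{w\in\mathcal{W}}\ell_w = S\cdot S^{-1}=1$ gives $m_e=1$ as required. Then $X := \bigcap_n\bigcup_{|w|=n}K_w$.

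Next I would verify the required properties of $X$. That $X$ is a binary Cantor set: the nested intervals $K_w$ have $\mathrm{diam}(K_w)=m_w\to 0$ as $|w|\to\infty$ (since the tail sum $\sum_{v\ne e}\ell_{wv}\le \sum_{u\succ \text{something}}\ell_u\to 0$ by convergence of $\sum\ell_u$, together with $\ell_w=\Psi(w)S^{-1}\to 0$), so each $a=(a_n)\in\{0,1\}^{\N}$ determines a unique point $x_a=\bigcap_n K_{a_1\cdots a_n}$; the map $a\mapsto x_a$ is continuous and order preserving by construction, sends $(0)^\infty\mapsto 0$ and $(1)^\infty\mapsto 1$. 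That the gaps are genuinely the complementary intervals and that they are labeled as in Section \ref{Section: binary cantor set}: this follows from the disjointness $K_{w0}\cap K_{w1}=\emptyset$ (true since $\ell_w>0$) and $K_w = K_{w0}\sqcup I_w\sqcup K_{w1}$ up to shared endpoints. That $X$ has zero Lebesgue measure: $|X|\le |K_e|-\sum_{w}|I_w| = 1 - \sum_w\ell_w = 0$ using that the $I_w$ are pairwise disjoint subsets of $[0,1]$. That $\{\lambda_0,\lambda_1\}$ are its dynamical proportions: the computation $|I_{iw}|/|I_w|=\lambda_i(w)$ above.

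Finally, uniqueness. If $Y$ is any zero-measure binary Cantor set with dynamical proportions $\{\lambda_0,\lambda_1\}$, then by the remark following the definition of the cocycle its gap lengths satisfy $|I_w^Y| = \Psi(w)\cdot|I_e^Y|$, and by \eqref{equation_sum_Psi} (zero measure) $\sum_w|I_w^Y|=1$, forcing $|I_e^Y|=S^{-1}$ and hence $|I_w^Y|=\ell_w$ for all $w$. Since a binary Cantor set of zero Lebesgue measure is the complement in $[0,1]$ of the disjoint union of its gaps, and the left-to-right ordering of the gaps is determined combinatorially by $\prec$ (independently of $Y$), the set $Y$ is uniquely determined by the lengths $(|I_w^Y|)_w$ and this ordering — both of which coincide with those of $X$. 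Hence $Y=X$.

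I expect the main obstacle to be the bookkeeping in the recursive construction: getting the additivity relation $m_w=m_{w0}+\ell_w+m_{w1}$ and the normalization $m_e=1$ exactly right (in particular pinning down that the empty word contributes a gap $I_e$), and then cleanly arguing $\mathrm{diam}(K_w)\to 0$ so that the intersection $\bigcap K_{a_1\cdots a_n}$ is a single point. Everything else — order preservation, the proportion identity, and uniqueness — is then a direct unwinding of the definitions.
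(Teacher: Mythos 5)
Your proposal is correct, but it organizes the construction differently from the paper. You build the Cantor set ``from the outside in'': you prescribe the gap lengths $\ell_w=\Psi(w)S^{-1}$, define the convex hulls $K_w$ by the recursion $K_w=K_{w0}\sqcup I_w\sqcup K_{w1}$ using the additivity $m_w=m_{w0}+\ell_w+m_{w1}$ and the normalization $m_e=1$, and then obtain the embedding $a\mapsto x_a$ from the nested intervals $K_{a_1\cdots a_n}$ with $\mathrm{diam}(K_w)\to 0$. The paper instead defines the embedding $\Theta:\{0,1\}^\N\to[0,1]$ in one stroke by the cumulative-sum formula $\Theta(a)=\sum_{w\prec a}\Psi(w)\cdot L$ over the total order $\prec$, and then reads off monotonicity, injectivity, continuity and the gap lengths $|I_w|=\Psi(w)L$ directly from properties of that series (the two constructions of course produce the same set, since your interval endpoints are exactly the paper's partial sums). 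What your route buys is that continuity and injectivity come for free from the nested-interval picture, at the cost of the combinatorial bookkeeping you flagged (the partition $\{wv:v\}=\{w\}\sqcup\{w0v:v\}\sqcup\{w1v:v\}$ behind the additivity relation); the paper's route avoids that bookkeeping entirely but must estimate $|\Theta(b)-\Theta(a)|\le\sum_{|w|>N}\Psi(w)\cdot L$ to get continuity. Your uniqueness argument (gap lengths are forced to be $\Psi(w)S^{-1}$ by zero measure, and the left-to-right order of gaps is determined by $\prec$, so the set is determined) is in fact spelled out more explicitly than in the paper, which leaves uniqueness essentially implicit. No gaps; the only loose phrasing is the bound for $\mathrm{diam}(K_w)\to 0$, which should be stated as $m_w\le\sum_{|u|\ge|w|}\ell_u$, the tail of a convergent series.
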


\begin{proof}
Given the functions $\{\lambda_0,\lambda_1\}$ and its associated the cocycle $\Psi$ define $$L=\left(\sum_{w\in\mathcal{W}}\Psi(w)\right)^{-1},$$ which by hypothesis is a positive quantity smaller than $1$. Using the total order $\prec$ in the set $\mathcal{W}\cup\{0,1\}^\N$ that we explained above, define $\Theta:\{0,1\}^\N\to[0,1]$ by the formula
\begin{equation*}
\Theta(a)=\sum_{w\prec a}\Psi(w)\cdot L.
\end{equation*}
Let's check that  $\Theta:(0)_{n\geq 1}\mapsto 0$ and $\Theta:(1)_{n\geq 1}\mapsto 1$, that it is order preserving, injective and continuous. Since $(0)_{n\geq 1}$ is the minimum and $(1)_{n\geq 1}$ is the maximum for the order defined above, we have that 
\begin{align*}
& \Theta \left( (0)_{n\geq 1} \right)=\sum_{w\prec(0)_{n\geq 1}}\Psi(w)\cdot L=\left(\sum_{\emptyset}\Psi(w)\right)\cdot L=0,  \\
& \Theta \left( (1)_{n\geq 1} \right)=\sum_{w\prec(1)_{n\geq 1}}\Psi(w)\cdot L=\left(\sum_{w\in\mathcal{W}}\Psi(w)\right)\cdot L=1.
\end{align*}
If $a$ and $b$ are sequences satisfying $a<b$ for the lexicographic order, then $\{w:a\prec w\prec b\}\neq\emptyset$ and thus 
$$\Theta(a)=\sum_{w\prec a}\Psi(w)\cdot L<\sum_{w\prec a}\Psi(w)\cdot L+\sum_{a\prec w \prec b}\Psi(w)\cdot L=\sum_{w\prec b}\Psi(w)\cdot L=\Theta(b),$$ 
from where we can see that $\Theta$ is injective and preserves the respective orders in the domain and range. To see that $\Theta$ is continuous,
observe that if $a=(a_n)_{n\geq 1}$ and $b=(b_n)_{n\geq 1}$ satisfy $a_n=b_n$ for $n=1,\dots,N$ for some $N>0$, then all the words satisfying $a\prec w\prec b$ have length $|w|>N$. Thus
$$\left\vert\Theta(b)-\Theta(a)\right\vert=\left\vert\sum_{a\prec w\prec b}\Psi(w)\cdot L\right\vert\leq\sum_{a\prec w\prec b}|\Psi(w)|\cdot L\leq \sum_{|w|>N}\Psi(w)\cdot L.$$
Since $\sum_{w\in\mathcal{W}}\Psi(w)<+\infty$ then the right hand side in the expression above converges to zero when $N\to+\infty$, from where we see that $\Theta$ is continuous.

From the previous paragraph we obtain that $\Theta:\{0,1\}^\N\to[0,1]$ is a continuous embedding and its image $X=\Theta(\{0,1\}^\N)$ is a binary Cantor set in the unit interval. For every finite word $w=w_1\cdots w_n$ define the sequences $a_w=w01^{\infty}$ and $b_w=w10^\infty$. The sequences $a_w$ and $b_w$ are consecutive elements for the lexicographic order in $\{0,1\}^\N$ and thus they are the boundary points of the gap $I_w$. The word $w$ is the only element of $\mathcal{W}$ satisfying $a_w\prec w\prec b_w$ for the total order in $\mathcal{W}\cup\{0,1\}^\N$. It follows that
$$|I_w|=|\Theta(b_w)-\Theta(a_w)|=\sum_{a_w \prec u\prec b_w}\Psi(u)\cdot L=\Psi(w)\cdot L$$
and hence, using expression \eqref{equation_defn_Psi}, we see that the proportions of the Cantor set are given by 
$$\lambda_i(w)=\frac{|I_{iw}|}{|I_w|}=\frac{\Psi(iw)}{\Psi(w)}=\lambda_i(w),\ \forall\ w\in\mathcal{W}.$$
Finally, to check that $X$ has zero Lebesgue measure, observe that
$$\textsl{Leb}(X)=1-\sum_{w\in\mathcal{W}}|I_w|=1-\left(\sum_{w\in\mathcal{W}}\Psi(w)\right)\cdot L=0.$$
\end{proof}

\subsection{Hyperbolic structures on Cantor sets}

Consider $X=\{0,1\}^{\N}$ endowed with the lexicographic order. A \emph{$C^s$-atlas on $X$}, $s\geq 0$, is a family $\mathcal{D}=\{\varphi_\alpha:X_{w_\alpha}\to\R\}_\alpha$ of continuous, order preserving, embeddings of the cylinders of $X$ into $\R$, where $(i)$- $\{X_{w_\alpha}\}_\alpha$ forms an open cover of $X$ and $(ii)$- the change of coordinates are order preserving $C^s$-diffeomorphisms. A \emph{$C^s$-differentiable structure} on $X$ is a maximal $C^s$-atlas, and two structures $\mathcal{D}$, $\mathcal{D}'$ are \emph{$C^k$-equivalent}, $k\geq 0$, if there exists a $C^k$-diffeomorphism $(X,\mathcal{D})\to(X,\mathcal{D}')$. We denote by $[\mathcal{D}]$ the set of all structures $C^s$-equivalent to $\mathcal{D}$.  

A $C^s$-differentiable structure $\mathcal{D}$ on $X$, $s\geq 1$, is called \emph{hyperbolic} if the shift map $\sigma:(X,\mathcal{D})\to(X,\mathcal{D})$ is a \emph{$C^s$-expanding map}, i.e. it has derivative greater than one at every point. In \cite{Sullivan1987ratio} and \cite{BedfordFisher1997Ratio} there is a classification of the equivalence classes of hyperbolic structures in $r=1+\alpha$ regularity. The classifying invariant is a {H}\"older function $\rho:Y\to\Delta$ called \emph{scaling function}, defined over an abstract Cantor set $Y$ called the \emph{dual Cantor set} of $X$ and with image in the unit simplex $\Delta=\{(t_1,t_2,t_3)\in\R^3:0\leq t_i\leq 1,\ t_1+t_2+t_3=1\}$. 

The construction of the scaling function is as follows: Let $\mathcal{D}$ be a hyperbolic $C^{1+\alpha}$-structure on $X$ and take a globally defined chart $\varphi:X\to\R$ embedding the Cantor set in the real line with extreme points $t=0$ and $t=1$. Since $\varphi(X)\subset\R$ is a binary Cantor set with a labeling on its cylinders, this embedding is completely determined by knowing the ratios $r(w)=(r_0(w),r_{\mathrm{gap}}(w),r_1(w))\in\Delta$ for every finite word $w$, where:
\begin{align*}
r_i(w)=\frac{\diam(\varphi(X_{wi}))}{\diam(\varphi(X_{w}))}\text{ for }i=0,1\ \text{ and }\ 
r_{\mathrm{gap}}(w)=1-\frac{\diam(\varphi(X_{w0}))+\diam(\varphi(X_{w0}))}{\diam(\varphi(X_{w}))}.
\end{align*}
The key point is that, if the shift map is $C^{1+\alpha}$-expanding, then it is possible to use the bounded variation of the derivative to prove that for every sequence $(a_n)_{n=1}^{\infty}$ with $a_n\in\{0,1\}$ there exists the limit $\lim_{n\to\infty} r(a_n\cdots a_1)\in\Delta$. Observe the detail that the sequence of words indexing this limit is constructed by the procedure of adding the letters to the left. (The limit could not exists if we add letters to the right.)

The set $\mathcal{W}$ of finite ordered strings in the alphabet $\{0,1\}$ can be thought of in two different ways: $(i)$- As being constructed from the empty word by adding letters to the right, $(ii)$- As being constructed from the empty word by adding letters to the left. Each construction allows to order the words in $\mathcal{W}$ in a one-sided tree, right-sided in case $(i)$ and left-sided in case $(ii)$, and each tree has a boundary Cantor set: $\prod_{i=1}^\infty\{0,1\}$ in case $(i)$ and $\prod_{-\infty}^{i=-1}\{0,1\}$ in case $(ii)$. Given now the Cantor set $X$ and the labeling $\{X_w:w\in\mathcal{W}\}$, observe that the boundary of the right-sided tree (case $(i)$) can be identified with $X$ itself. The boundary of the left-sided tree (case $(ii)$) will be denoted by $Y$ and will be called the dual Cantor set of $X$. It is the set over which the limits above are naturally defined.   

\begin{thm}[Classification of hyperbolic $C^{1+\alpha}$-structures, \cite{Sullivan1987ratio}-\cite{BedfordFisher1997Ratio}.]\label{thm_Sullivan-1}
Let $\mathcal{D}$ be a $C^{1+\alpha}$-structure on $X$ with \emph{bounded geometry}. Then $\sigma:(X,\mathcal{D})\to(X,\mathcal{D})$ is a $C^{1+\alpha}$-expanding map if and only if there exist the limit $\rho(y)=\lim_{n\to-\infty}r(y_{n}\cdots y_{-1})$ for all $y\in Y$, it is independent of the chart $\varphi$, and the limit function $\rho:Y\to\Delta$ is $\alpha$-{H}\"older. Moreover, the map that assigns to each $\mathcal{D}$ its scaling function $\rho:Y\to\Delta$ induces a one to one correspondence
$$\Diff^{\ 1+\alpha}(X,\sigma)\coloneqq\left\{[\mathcal{D}]:\ \sigma:(X,\mathcal{D})\to(X,\mathcal{D})\ \text{is}\ C^{1+\alpha}-\text{expanding}\ \right\}\xrightarrow[]{(1:1)} C^{\alpha}(Y,\Delta).$$
\end{thm}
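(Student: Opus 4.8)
The plan is to prove the statement in four steps: (i) the forward implication together with chart-independence of $\rho$; (ii) injectivity of $\mathcal D\mapsto\rho$; (iii) surjectivity; (iv) the reverse implication of the equivalence. Everything is powered by the \emph{bounded distortion} estimate for $C^{1+\alpha}$ expanding maps: if $\sigma$ is $C^{1+\alpha}$-expanding in a chart $\varphi$ with expansion constant $\kappa>1$, then for every word $w$ the inverse branch $\sigma_w^{-1}$ (composition of the elementary inverse branches read off the letters of $w$) satisfies $\bigl|\log(\sigma_w^{-1})'(u)-\log(\sigma_w^{-1})'(v)\bigr|\le C|u-v|^{\alpha}$ with $C$ independent of $w$, since $\sum_{k\ge 0}\kappa^{-\alpha k}<\infty$; in particular $\diam\varphi(K_w)\le\kappa^{-|w|}\diam\varphi(X)$ and any two cylinders of a common level have comparable diameters. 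The first step establishes the \emph{forward implication}: assuming $\sigma$ is $C^{1+\alpha}$-expanding in $\varphi$, the identities $K_{jw}=\sigma_j^{-1}(K_w)$, $K_{jwi}=\sigma_j^{-1}(K_{wi})$ and the mean value theorem give $r_i(jw)=\bigl(1+O(\diam\varphi(K_w)^{\alpha})\bigr)r_i(w)=\bigl(1+O(\kappa^{-\alpha|w|})\bigr)r_i(w)$, so along any left-growing sequence $w^{(N)}=y_{-N}\cdots y_{-1}$ the multiplicative corrections are summable and $r(w^{(N)})\to\rho(y)$, with $|r(w^{(N)})-\rho(y)|=O(\kappa^{-\alpha N})$; if $y,y'\in Y$ share their last $k$ entries then both limits are $O(\kappa^{-\alpha k})$-close to $r(y_{-k}\cdots y_{-1})$, which gives the $\alpha$-Hölder bound. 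For chart-independence, a second chart $\varphi'$ of the same structure differs from $\varphi$ by a $C^{1+\alpha}$ diffeomorphism, the same computation yields $r'_i(w)=\bigl(1+O(\diam\varphi(K_w)^{\alpha})\bigr)r_i(w)$, and since $\diam\varphi(K_{w^{(N)}})\to0$ the two limits coincide.

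Next, \emph{injectivity}. Let $\mathcal D,\mathcal D'$ be hyperbolic $C^{1+\alpha}$-structures with $\rho_{\mathcal D}=\rho_{\mathcal D'}=:\rho$, fix global charts $\varphi,\varphi'$, and let $h:=\varphi'\circ\varphi^{-1}$ be the increasing homeomorphism matching cylinders by their labels; bounded geometry makes $h$ a priori bi-Hölder. For $x\in X$ one has, with $x_1\cdots x_0:=e$,
$$\frac{\diam\varphi'(K_{x_1\cdots x_n})}{\diam\varphi(K_{x_1\cdots x_n})}=\frac{\diam\varphi'(X)}{\diam\varphi(X)}\prod_{k=1}^{n}\frac{r^{\varphi'}_{x_k}(x_1\cdots x_{k-1})}{r^{\varphi}_{x_k}(x_1\cdots x_{k-1})}.$$
Reducing each word $x_1\cdots x_{k-1}$ to its last $\lceil (k-1)/2\rceil$ letters at cost $O(\kappa^{-\alpha k/2})$ (the stripping estimate of the forward direction, valid here for both structures), and then to $\rho$ evaluated at a common extension, the two $\rho$-contributions cancel because $\rho_{\mathcal D}=\rho_{\mathcal D'}$; hence each factor is $1+O(\kappa^{-\alpha k/2})$ and the product converges to a positive number $h'(x)$. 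Comparing points agreeing up to level $k$ shows $h'$ is $\alpha$-Hölder on $X$, so $h$ extends to a $C^{1+\alpha}$ diffeomorphism of $[0,1]$ conjugating $\sigma$ to $\sigma$, and $[\mathcal D]=[\mathcal D']$.

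For \emph{surjectivity} (and the reverse implication), start from an $\alpha$-Hölder $\rho:Y\to\Delta$ with values in a fixed compact subset of the open simplex, and declare $r(w):=\rho(\widehat w)$ for every finite word $w$, where $\widehat w\in Y$ extends $w$ (read from the right) by a fixed symbol. These triples prescribe, at each cylinder, the sizes of the two sub-cylinders and the intervening gap relative to the parent, equivalently a pair of dynamical proportions $\{\lambda_0,\lambda_1\}$ on $\W$ whose cocycle $\Psi$ satisfies $1<\sum_{w\in\W}\Psi(w)<+\infty$; Lemma \ref{lemma_Cantor_proportions} then yields a unique zero-measure binary Cantor set $C_\rho\subset[0,1]$ realizing these proportions, and one takes $\varphi=\mathrm{id}$. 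The elementary inverse branches of $\sigma$ are defined on $C_\rho$ by the products $\diam(K_{jw})/\diam(K_w)$ (convergent in $\rho$), extended across the gaps of $C_\rho$ by a standard $C^{1+\alpha}$ interpolation; using Hölder continuity of $\rho$ and the geometric decay of cylinders one checks that the resulting derivatives are $\alpha$-Hölder and $>1$, so $\sigma$ is $C^{1+\alpha}$-expanding with scaling function $\rho$. The reverse implication is the same construction performed in place: given $\mathcal D$ with bounded geometry and ratios converging to an $\alpha$-Hölder $\rho$, telescoping produces an $\alpha$-Hölder derivative for the elementary inverse branches of $\sigma$ in the chart of $\mathcal D$, which one extends $C^{1+\alpha}$ over the gaps, so $\sigma$ is $C^{1+\alpha}$-expanding.

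The bounded-distortion bookkeeping behind the forward direction and chart-independence is routine; I expect the two genuinely delicate points to be: (a) the construction step, where one must produce honest $C^{1+\alpha}$ \emph{interval} maps — not merely maps on the Cantor set — with $\alpha$-Hölder derivatives, which is exactly where Hölder regularity of $\rho$ is used and where the interpolation over the gaps has to be done with care; and (b) the rigidity in the injectivity step, i.e.\ upgrading the a priori only bi-Hölder label-matching homeomorphism to a $C^{1+\alpha}$ diffeomorphism from the mere equality of scaling functions. Both are the heart of the theorem and follow the arguments of Sullivan \cite{Sullivan1987ratio} and Bedford--Fisher \cite{BedfordFisher1997Ratio}.
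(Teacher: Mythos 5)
The paper does not prove this theorem: it is imported verbatim from Sullivan \cite{Sullivan1987ratio} and Bedford--Fisher \cite{BedfordFisher1997Ratio} as background, so there is no in-paper argument to compare against. Your outline is the standard bounded-distortion proof from those references and is correct in its main lines, with the two points you flag (the $C^{1+\alpha}$ interpolation of the inverse branches across the gaps, and upgrading the label-matching homeomorphism to a $C^{1+\alpha}$ diffeomorphism from equality of scaling functions) being exactly where the real work sits; the only caveat worth recording is that surjectivity onto all of $C^{\alpha}(Y,\Delta)$ only holds if one reads the correspondence as restricted to $\rho$ taking values in a compact subset of the open simplex, which is the counterpart of the bounded-geometry hypothesis.
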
 

Moving to the case of regularity $s\geq 2$, a description of the $C^s$-equivalence classes of $C^s$-structures making the shift $C^s$-expanding becomes more subtle. In \cite{Sullivan1987ratio}, \cite{BedfordFisher1997Ratio} they prove the following fundamental result:

\begin{thm}[Rigidity, \cite{Sullivan1987ratio}-\cite{BedfordFisher1997Ratio}]
Any hyperbolic $C^s$-structure $\mathcal{D}$ satisfies $\diff^{\ 1}(X,\mathcal{D})=\diff^{\ s}(X,\mathcal{D})$, $\forall s\geq 1+\alpha$.
\end{thm}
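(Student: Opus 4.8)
The statement to prove is the Rigidity Theorem: any hyperbolic $C^s$-structure $\mathcal{D}$ on $X$ satisfies $\diff^{\,1}(X,\mathcal{D}) = \diff^{\,r}(X,\mathcal{D})$ for all $s \geq 1+\alpha$, i.e.\ a $C^1$-conjugacy between two $C^s$-hyperbolic Cantor sets automatically upgrades to a $C^s$-conjugacy. Since this is quoted from Sullivan \cite{Sullivan1987ratio} and Bedford--Fisher \cite{BedfordFisher1997Ratio}, my plan is to reconstruct the standard bootstrap argument. The key observation is that a $C^1$-conjugacy $h$ must preserve the scaling function: by Theorem \ref{thm_Sullivan-1}, the scaling function $\rho:Y\to\Delta$ is the complete $C^{1+\alpha}$-invariant, and a $C^1$-diffeomorphism between the two structures forces equality of the infinitesimal ratios $r(y_n\cdots y_{-1})$ in the limit along the dual tree, hence $\rho_{\mathcal{D}} = \rho_{\mathcal{D}'}$. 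So it suffices to show that the regularity of $h$ is governed by $\rho$ alone, plus the $C^s$-regularity of the two expanding shift maps.

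The plan is as follows. First, I would set up the conjugacy as a solution of a fixed-point / cohomological equation: if $\sigma, \sigma'$ are the two $C^s$-expanding shift maps and $h\circ\sigma = \sigma'\circ h$, then differentiating (formally) gives a transfer-operator equation for $\log h'$, namely $\log h'(\sigma x) = \log h'(x) + \log(\sigma')'(hx) - \log\sigma'(x)$. The right-hand side is a $C^{s-1}$ cocycle, and one wants to show the unique continuous solution $\log h'$ is in fact $C^{s-1}$ on the Cantor set in the sense of Milnor. Second, I would run the standard bootstrap: knowing $h$ is $C^{k+\alpha}$, the cocycle equation expresses $h'$ (or $\log h'$) as a fixed point of a contraction on the space of $C^{k+\alpha}$ functions whose contraction constant beats the expansion rate of $\sigma$ — because pulling back by $\sigma^{-1}$ contracts $C^{k+\alpha}$-norms when $k+\alpha \le s$ (here one uses that $\sigma$ is $C^s$-expanding, so its inverse branches contract the relevant H\"older norms of derivatives). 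This gains one full derivative at each step, from $k=0$ (given: $C^{1+\alpha}$, hence $h' \in C^\alpha$) up to $k = r-1$, yielding $h \in C^s$. The role of bounded geometry and of $\rho$ being $\alpha$-H\"older is exactly to make the base case $C^{1+\alpha}$ available and to control distortion uniformly along all cylinders.

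The main technical obstacle, which I would expect to consume most of the work, is the analytic estimate underlying the bootstrap: showing that the graph-transform (or Ruelle transfer) operator associated to the inverse branches of $\sigma'$, acting on derivatives of $h$ restricted to the Cantor set, is a contraction in the appropriate $C^{k,\alpha}$-seminorm for each $1 \le k \le r-1$. This requires (a) carefully handling the fact that $h$ is only defined on a Cantor set of zero measure, so all derivative bounds must be interpreted via Milnor-type $C^s$ extensions and Whitney-type estimates on incremental ratios along cylinders; (b) a Fa\`a di Bruno / chain-rule bookkeeping to see that the $k$-th derivative of the composition $h = (\text{branch of }\sigma')\circ h\circ(\text{branch of }\sigma)$ is governed, modulo lower-order already-controlled terms, by a genuinely contracting linear term $\big((\sigma'^{-1})'\big)\cdot(h^{(k)}\circ\sigma)\cdot(\sigma')^{k}$ whose coefficient has sup-norm $< 1$ once $k \le s$; and (c) verifying the limit function is independent of the chart and globally consistent, which is where the $\alpha$-H\"older control of $\rho$ re-enters. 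A clean way to organize (b)--(c) is to phrase everything in terms of the derivatives of the charts and the bounded-geometry constants, reducing the statement to a uniform estimate on a finite collection of $C^s$ maps (the generators and their inverses) and then iterating; the delicate point is that the estimate must be uniform over the whole (infinite) symbolic tree, which is precisely what bounded geometry and the H\"older scaling function buy us.

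\medskip
(For the purposes of this paper we only use the Rigidity Theorem as a black box in the stated regularity range, so we content ourselves with indicating the above scheme and refer to \cite{Sullivan1987ratio, BedfordFisher1997Ratio} for the complete argument.)
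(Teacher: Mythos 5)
The paper does not prove this theorem: it is stated as an imported result of Sullivan and Bedford--Fisher, with no argument given beyond the citations. So there is no internal proof to compare your write-up against; the only meaningful comparison is with the cited sources, and your sketch does follow their standard strategy. The two-step structure you propose is the right one: (i) a $C^1$-conjugacy forces equality of the scaling functions (because $|h(I_w)|/|I_w|\to h'(x_a)$ along shrinking cylinders, exactly the computation the paper itself runs in Section \ref{sec_conjugacy_classes}), which by Theorem \ref{thm_Sullivan-1} upgrades the conjugacy to $C^{1+\alpha}$; (ii) a bootstrap on the cohomological equation $\log h'(\sigma x)-\log h'(x)=\log(\sigma')'(hx)-\log\sigma'(x)$, whose right-hand side is $C^{s-1}$ data, pushes the regularity of $h'$ up to $C^{s-1}$ on $X$ in the Milnor sense. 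One small point worth making explicit in step (i): the classification theorem produces \emph{some} $C^{1+\alpha}$-diffeomorphism realizing the equality of scaling functions, but on a Cantor set the conjugacy is uniquely determined on $X$ by the coding, so it necessarily agrees with your given $h$ there.

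That said, what you have written is a proof \emph{scheme}, not a proof, and you say so yourself. The entire analytic content --- the contraction estimate for the transfer operator on $C^{k,\alpha}$-seminorms, the Fa\`a di Bruno bookkeeping showing the top-order term dominates, the uniformity over the infinite symbolic tree via bounded geometry, and the Whitney-type extension needed to interpret higher derivatives on a set of measure zero --- is named but not carried out, and these are precisely the points where the published proofs spend their effort. Since the paper only uses the Rigidity Theorem as a black box, deferring to \cite{Sullivan1987ratio, BedfordFisher1997Ratio} is the appropriate course of action, and your final parenthetical remark is the honest (and sufficient) resolution; just do not present the preceding two paragraphs as a proof.
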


The rigidity theorem implies that, for every $s\geq 1+\alpha$: $(i)$- if a $C^{1+\alpha}$-atlas $\mathcal{D}$ has a refinement of class $C^s$ for which the shift is $C^s$-expanding then $\mathcal{D}$ is already of class $C^s$; $(ii)$- the correspondence $\Diff^{\ s}(X,\sigma)\to C^{\alpha}(X,\Delta)$ sending $[\mathcal{D}]\mapsto\rho$ is injective.

\subsection{Some properties of self-conformal IFSs} \label{Section: self conformal}

Recall the definitions of self-conformal and hyperbolic IFSs from Section \ref{Section: Intro}, and let $\Phi = \lbrace f_0, f_1 \rbrace\subseteq C^1 ([0,1])$ be a hyperbolic IFS. For every $a\in \lbrace0,1\rbrace^\mathbb{N}$ define
$$x_a = \lim_{n\rightarrow \infty} f_{a_1}\circ \circ \circ f_{a_n} (0).$$
It is well known that $x_a\in X=X_\Phi$ and that this is map is actually a bijection. In fact, this map shows that $X$ is a binary Cantor set in the sense of Section \ref{Section: binary cantor set}. Furthermore, it is easy to see that, by strong separation, $\text{Leb}(X)=0$.

Let us discuss some more standard properties of hyperbolic Cantor sets; as before, all proofs can be worked out from e.g. \cite[Chapter 2]{bishop2013fractal}. For $w\in \mathcal{W}$ let $F_w$ be the cylinder map
$$F_w = f_{w_{1}}\circ \circ \circ f_{w_n}.$$
Note that if $y=F_w(x_a)$ then 
$$y =  x_{w_1\cdots w_n a_1 a_2 a_3 \cdots}.$$
Also, $F_w$ has a unique fixed point given by 
$$x_{w^\infty} =x_{(w_1\cdots w_n)(w_1\cdots w_n)(w_1\cdots w_n)\cdots}.$$  

Next, we have the usual labeling $\{I_w\}_{w\in\W}$ of the components of $[0,1]\setminus X$ defined by setting $I_w\coloneqq K_w\setminus(K_{w0}\cup K_{w1})$, $\forall\ w\in\W$. Then we always have 
$$F_w(I_u)=I_{wu}, \, \forall\ w,u\in\W.$$ 
Observe that the dynamical proportions of the invariant Cantor set must satisfy $0<\lambda_i(w)<1$, $\forall$ $w\in\mathcal{W}$ and $i=0,1$. We also note that  
$$|I_w|<\mu^{|w|},\, \forall w\in\mathcal{W},$$ 
where $0<\mu<1$ is any constant such that  $f_i'(t)<\mu<1$, $\forall\ 0<t<1$, $i=0,1$. Hence, $|I_w|$ converges exponentially fast to zero when $|w|\to\infty$.

Finally, we note that if $\widetilde{\Phi}=\lbrace g_0,g_1\rbrace$ is another hyperbolic IFS then there exists a homeomorphism $h:[0,1]\to[0,1]$ such that $h(K_\Phi)=K_{\widetilde{\Phi}}$ and $h\circ F_w(x)=\widetilde{F}_w\circ h(x)$, $\forall\ x\in X_\Phi$.

\begin{figure}[t]
\begin{center}
\includegraphics[width=\textwidth]{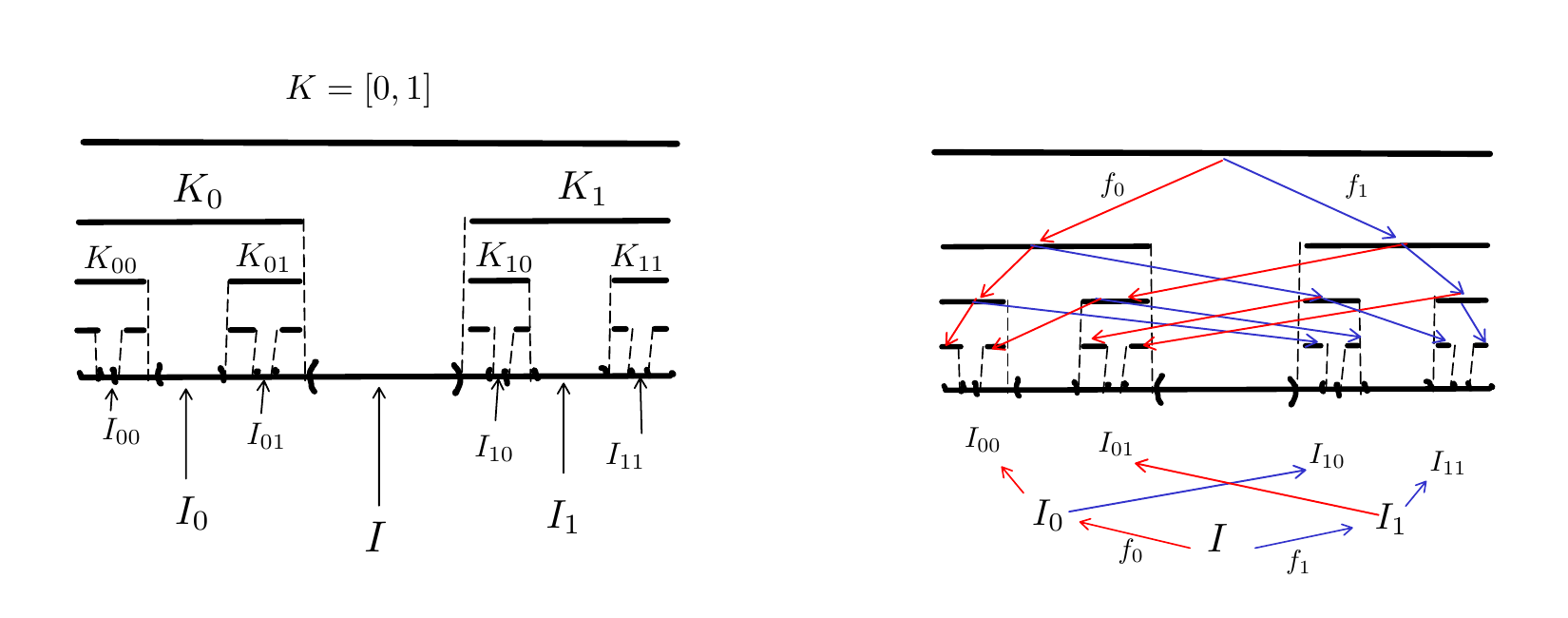}
\caption{On the left: Labeling of cylinders and gaps. On the right: The action of the IFS.}
\label{fig_IFS_action}
\end{center}
\end{figure}

%%%%%%%%%%%%%%%%%%%%%%%%%%%%%%%%%%%%%%%%%%%%%%%%%%%%%%%%%%%%%%%%%%%%%%%%%%%%%%%%%%%%%%%%%%%%%%%%%%%%%%%%%%%%%%%%%%%%%%%%%%%%%%%%%%%%%%%%%%%%%%%%%%%%
\section{pseudo-Affine hyperbolic IFS.}
\label{sec_pseudo-affine_IFS}
Recall from \eqref{defn_pseudo-affine_IFS} that a self-conformal IFS $\Phi$ of class $C^s$, where $1\leq s\leq \infty$ or $s=\omega$, is \emph{pseudo-affine} if there exists $0<\lambda<1/2$ such that $f_i'(x)=\lambda$ for all $x\in X_\Phi$ and $f\in \Phi$. In this section we give a complete classification of the $C^s$-conjugacy classes of pseudo-affine IFSs.

\subsection{Asymptotically constant dynamical proportions}
We begin by characterizing pseudo-affinity in terms of dynamical proportions.

\begin{defn}
We say that a binary Cantor set $X$ with gap-labelling $\{I_w\}_{w\in\mathcal{W}}$  has \emph{asymptotically constant dynamical proportions} if there exists $0<\lambda<1/2$ such that 
\begin{equation*}
\lambda_i(w)=\lambda+\theta_i(w),\ \forall\ w\in\mathcal{W}\ \text{for both}\ i=0,1,
\end{equation*}
and 
$$\lim_{n\to\infty}\theta_i(w_1\cdots w_n)=0,\quad \forall\ (w_n)_{n\geq 1} \in \lbrace0,1\rbrace^\mathbb{N}.$$ 
\end{defn}
Note that attractors of self-similar IFSs with a uniform contraction ratio $\lambda$  have  \textit{constant} dynamical proportions
$\lambda_i(w)=\lambda$, $\forall\ w\in\mathcal{W}$. 

%--------------------------------------------------------------------------------------------------------------------------------------

\begin{prop}\label{prop_classification_1}
Let $\Phi = \lbrace f_1, f_2 \rbrace$ be a  $C^1$ hyperbolic IFS. Then the following properties are equivalent:
\begin{enumerate}
\item $\Phi$ is pseudo-affine;
\item $X=X_\Phi$ has asymptotically constant dynamical proportions.
\end{enumerate}
\end{prop}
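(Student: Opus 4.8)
The plan is to prove the equivalence by relating the derivatives of the cylinder maps $F_w$ to the dynamical proportions $\lambda_i(w)$, using the bounded distortion that comes with $C^1$ hyperbolicity together with the exponential shrinking of gaps $|I_w|<\mu^{|w|}$.

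\medskip

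\textbf{(1) $\Rightarrow$ (2).} Assume $f_i'(x)=\lambda$ for all $x\in X$. First I would observe that $\lambda<1/2$: since $X\subseteq f_0(X)\cup f_1(X)$ and each $f_i$ is $C^1$ with derivative $\lambda$ on $X$, a compactness/mean-value argument on a neighborhood of $X$ forces $|K_{0}|+|K_{1}|\approx \lambda\cdot|K_e|+\lambda\cdot|K_e|<|K_e|$ in the limit along deep cylinders, so $\lambda<1/2$ (this is exactly the content cited as ``cf. proof of claim \ref{claim_1-prop_classification_2}''). Next, the key estimate: because $F_w=f_{w_1}\circ\cdots\circ f_{w_n}$ and $F_w(I_u)=I_{wu}$, the mean value theorem gives $|I_{wu}| = |F_w'(\xi_{u})|\,|I_u|$ for some $\xi_u\in K_{wu}$ — no, more carefully, for some $\xi_u$ in the interval $I_u$ mapped forward; but since $F_w$ is monotone, $|I_{wu}|=|F_w(b_u)-F_w(a_u)|=|F_w'(\xi)|\,|I_u|$ with $\xi$ between the endpoints $a_u,b_u$ of $I_u$, which lie in $K_u$. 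Therefore
$$
\lambda_i(w)=\frac{|I_{iw}|}{|I_w|}=\frac{|f_i'(\eta_w)|\,|I_w|}{|I_w|}=|f_i'(\eta_w)|,
$$
where $\eta_w$ is a point in $K_w=\mathrm{Conv}(X_w)$. Now I would use uniform continuity of $f_i'$ on $[0,1]$: given $\varepsilon>0$ there is $\delta>0$ with $|f_i'(u)-f_i'(t)|<\varepsilon$ whenever $|u-t|<\delta$; since $|K_w|\le \sup_u|I_u|$ over all $u$ with $|u|\ge ?$... more simply $\mathrm{diam}(K_w)\to 0$ uniformly as $|w|\to\infty$ (indeed $\mathrm{diam}(K_w)\le \mu^{|w|}+\sum_{k\ge|w|}(\text{gaps inside})$, which is $\le C\mu^{|w|}$), and $X_w\neq\emptyset$ picks up a point of $X$ inside $K_w$ where $f_i'=\lambda$. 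Hence $|f_i'(\eta_w)-\lambda|<\varepsilon$ once $|w|$ is large enough, uniformly in $w$. Setting $\theta_i(w):=\lambda_i(w)-\lambda=|f_i'(\eta_w)|-\lambda$ gives $\theta_i(w_1\cdots w_n)\to 0$ as $n\to\infty$, uniformly, which is stronger than the pointwise-along-sequences statement required.

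\medskip

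\textbf{(2) $\Rightarrow$ (1).} Assume $\lambda_i(w)=\lambda+\theta_i(w)$ with $\theta_i(w_1\cdots w_n)\to 0$ along every sequence. Fix $i$ and a point $x\in X$, say $x=x_a$ with $a=(a_n)$. I want $f_i'(x)=\lambda$. The idea is to recover $f_i'(x)$ as a limit of ratios $\lambda_i(\text{long word})$. Indeed, by the chain rule,
$$
F_{w}'(x_b)=\prod_{k=1}^{n} f_{w_k}'\big(x_{w_{k+1}\cdots w_n b}\big),
$$
and on the other hand, applying the mean value computation above to a gap $I_{a_{n+1}a_{n+2}\cdots}$-approximant — more precisely, comparing $|I_{w}|$ and $|I_{\sigma w}|$ type ratios along the orbit of $x$ — one obtains that for the word $w^{(n)}=a_1\cdots a_n$,
$$
\frac{|I_{i a_1\cdots a_n}|}{|I_{a_1\cdots a_n}|}=\lambda_i(a_1\cdots a_n)\ \xrightarrow[n\to\infty]{}\ \lambda .
$$
But the same ratio equals $|f_i'(\zeta_n)|$ for a point $\zeta_n\in K_{a_1\cdots a_n}$ (by the mean value theorem applied to $f_i$ on the interval with endpoints the gap endpoints of $I_{a_1\cdots a_n}$), and $K_{a_1\cdots a_n}\downarrow\{x\}$ since $\mathrm{diam}(K_{a_1\cdots a_n})\to 0$ and $x\in X_{a_1\cdots a_n}$ for all $n$. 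By continuity of $f_i'$, $|f_i'(\zeta_n)|\to |f_i'(x)|$, hence $|f_i'(x)|=\lambda$. Dropping absolute values (orientation is preserved, $f_i'>0$) gives $f_i'(x)=\lambda$ for all $x\in X$, i.e. $\Phi$ is pseudo-affine with slope $\lambda$. One should note $\lambda<1/2$ is forced here too, since $\lambda_0(w)+\lambda_1(w)<1$ always (the two subgaps plus the cylinders fill $I_w$... wait, rather $I_{0w},I_{1w}\subset I_w$ disjointly) — actually I would just invoke the hypothesis that in the definition of asymptotically constant proportions $\lambda<1/2$ is built in, or re-derive it from $\lambda_0(w)+\lambda_1(w)<1$.

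\medskip

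\textbf{Main obstacle.} The delicate point is the uniform control $\mathrm{diam}(K_w)\to 0$ as $|w|\to\infty$ and the clean identification $\lambda_i(w)=|f_i'(\eta_w)|$ for a genuine point $\eta_w\in K_w$: one must be careful that the mean value theorem is applied to $f_i$ restricted to an interval (the closure of $I_w$, or of $K_w$) on which $f_i$ is $C^1$, and that the intermediate point lies in $K_w$ so that it is close to $X_w\ni$ a point where one has information. The shrinking of $K_w$ follows from hyperbolicity ($|I_u|\le\mu^{|u|}$ and $K_w$ is covered by $K_{w0}\cup I_w\cup K_{w1}$, giving $\mathrm{diam}(K_w)\le\mathrm{diam}(K_{w0})+|I_w|+\mathrm{diam}(K_{w1})$ and an easy induction yields $\mathrm{diam}(K_w)\le C\mu^{|w|}$). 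Everything else is bookkeeping with the chain rule and uniform continuity of the $f_i'$ on the compact interval $[0,1]$.
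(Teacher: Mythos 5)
Your argument is correct and follows essentially the same route as the paper: both directions rest on identifying $\lambda_i(w)$ with a value of $f_i'$ at a point of the closure of the gap $I_w$ (you use the mean value theorem, the paper uses the integral average $\frac{1}{|I_w|}\int_{I_w}f_i'$, which is the same thing), combined with continuity of $f_i'$ and the exponential shrinking of gaps and cylinders. The only cosmetic difference is in (2)$\Rightarrow$(1), where you evaluate $f_i'$ at an arbitrary $x_a$ via the gaps $I_{a_1\cdots a_n}$, while the paper evaluates it on the dense set of periodic points $x_{w^\infty}$ via the gaps $I_{w^k}$ and then concludes by continuity.
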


\begin{proof}
Since the contractions $f_i:[0,1]\to\R$, $i=0,1$ are of class $C^1$ the proportions are given by 
\begin{equation*}
\lambda_i(w)=\frac{|I_{iw}|}{|I_w|}=\frac{1}{|I_w|}\int_{I_w}f_i'(t)dt.
\end{equation*}
\begin{itemize}
\item Suppose $f_i'(x)=\lambda$ for every $x\in X$, $i=0,1$.  Write $f_i'(t)=\lambda+\eta_i(t)$, $\forall\ 0\leq t\leq 1, i=0,1$. Then
\begin{equation*}
\lambda_i(w)=\lambda+\theta_i(w),\text{ where }\theta_i(w)=\frac{1}{|I_w|}\int_{I_w}\eta_i(t)dt.
\end{equation*}
Let's see that $\theta_i(w)\to 0$ \emph{uniformly} in $|w|\to\infty$. Fix some $\varepsilon>0$. Since $f_i'$, $i=0,1$ are both continuous then there exists $\delta>0$ such that $|f_i'(x)-f_i'(y)|<\varepsilon$ whenever $|x-y|<\delta$. Since $|I_w|\to 0$ when $|w|\to\infty$ (cf. Section \ref{Section: binary cantor set}), there  exists $n_0>0$ such that $|I_w|<\delta$ when $|w|>n_0$. Then $|\theta_i(w)|<\varepsilon$ for every $w$ with length $|w|>n_0$.

\item Reciprocally, assume that $\lambda_i(w)=\lambda+\theta_i(w)$ with $\theta_i(w)\to 0$ when $|w|\to\infty$. Recall that for every finite word $w=w_1\cdots w_n\in\mathcal{W}$ the cylinder $F_w:[0,1]\to[0,1]$ has a unique fixed point $x_w\in[0,1]$, that belongs to $X$ and has a periodic point coded by 
$$(w_1\cdots w_n)(w_1\cdots w_n)\cdots.$$ 
Recall that $\{x_w:w\in\mathcal{W}\}$ is  dense  in $X$. It suffices to show that $f_i'$ is constant on this set. Define $w^k=\prod_{i=1}^k(w_1\cdots w_n)$. Since $f_i'$ is continuous, its derivative on $x_w$ can be expressed by
\begin{equation}
f_i'(x_w)=\lim_{k\to\infty}\frac{1}{|I_{w^k}|}\int_{I_{w^k}}f_i'(t)dt=\lim_{k\to\infty}(\lambda+\theta_i(w^k))=\lambda.
\end{equation}
The claim is proved.
\end{itemize}
\end{proof}

%--------------------------------------------------------------------------------------------------------------------------------------

The next Proposition shows that the degree of regularity of the IFS is expressed in the convergence ratio of $|\theta_i(w)|\to 0$ when $|w|\to \infty$. 

\begin{prop}\label{prop_calssification_2}
Let $\Phi$ be a pseudo-affine hyperbolic IFS of class $C^1$ and let its dynamical proportions be 
$\{\lambda_i(w)=\lambda+\theta_i(w),\ w\in\mathcal{W}\}_{i=0,1}$. Then $\theta_i(w)\to 0$ uniformly in $|w|\to\infty$. Moreover:

\begin{enumerate}
\item If $\Phi$ is $C^{r,\alpha}$ for $1\leq r <\infty$, $0<\alpha\leq 1$, there exists $C>0$ such that $|\theta_i(w)|<C\cdot|I_w|^{r-1+\alpha}$, $\forall\ w\in\mathcal{W}$;
\item If $\Phi$ is $C^\infty$ then for every $1\leq r <\infty$ there exists $C(r)>0$ such that $|\theta_i(w)|<C(r)\cdot|I_w|^{r}$, $\forall\ w\in\mathcal{W}$.
\end{enumerate}
\end{prop}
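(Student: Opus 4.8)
The plan is to exploit the integral representation of the dynamical proportions already used in the proof of Proposition \ref{prop_classification_1}, namely $\lambda_i(w)=\frac{1}{|I_w|}\int_{I_w}f_i'(t)\,dt$, together with pseudo-affinity, which tells us $f_i'(x)=\lambda$ for every $x\in X_\Phi$, and in particular at the two endpoints of the gap $I_w$ (which lie in $X_\Phi$). Writing $\eta_i(t)=f_i'(t)-\lambda$, we have $\theta_i(w)=\frac{1}{|I_w|}\int_{I_w}\eta_i(t)\,dt$, and the key feature is that $\eta_i$ vanishes on $\partial I_w\subset X_\Phi$. So the whole problem reduces to estimating the average of a $C^{r-1,\alpha}$ function $\eta_i$ over an interval $I_w$ at whose endpoints $\eta_i=0$, and showing this average is $O(|I_w|^{r-1+\alpha})$.

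For the first statement I would argue by Taylor-type estimates on the antiderivative. Let $a_w,b_w$ be the endpoints of $I_w$, so $a_w,b_w\in X_\Phi$ and $|b_w-a_w|=|I_w|$. When $r=1$: since $f_i'$ is $\alpha$-Hölder with some constant $A$, and $\eta_i(a_w)=0$, we get $|\eta_i(t)|=|\eta_i(t)-\eta_i(a_w)|\leq A\,|t-a_w|^\alpha\leq A\,|I_w|^\alpha$ for all $t\in I_w$, hence $|\theta_i(w)|\leq A\,|I_w|^\alpha$, which is the claim with $C=A$. When $r\geq 2$: here $f_i'$ is $C^{r-1,\alpha}$, so $\eta_i$ is $C^{r-1,\alpha}$ and $\eta_i(a_w)=\eta_i(b_w)=0$. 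By Rolle's theorem there is $\xi_w\in I_w$ with $\eta_i'(\xi_w)=0$; more to the point, I would Taylor-expand $\eta_i$ around $a_w$ to order $r-1$ and integrate term by term over $I_w$. Each monomial $\frac{\eta_i^{(k)}(a_w)}{k!}(t-a_w)^k$ for $1\le k\le r-1$ contributes $\frac{\eta_i^{(k)}(a_w)}{(k+1)!}|I_w|^{k}$ to $\theta_i(w)$, and the Taylor remainder, which is $O(|I_w|^{r-1+\alpha})$ by the $\alpha$-Hölder continuity of $\eta_i^{(r-1)}$, contributes $O(|I_w|^{r-1+\alpha})$ to $\theta_i(w)$. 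So I must kill the lower-order terms, i.e. show $\eta_i^{(k)}(a_w)=0$ for $1\le k\le r-1$. This is the crux: it is \emph{not} immediate, since $X_\Phi$ has empty interior, so one cannot differentiate the relation $\eta_i\equiv 0$ on $X_\Phi$ naively. The right tool is the following: because $a_w\in X_\Phi$ is an accumulation point of $X_\Phi$ from (at least) one side, and $\eta_i$ vanishes on $X_\Phi$, and $\eta_i$ is $C^{r-1}$, a standard lemma (a Whitney-type / repeated mean-value argument on sequences of points of $X_\Phi$ clustering at $a_w$) forces all derivatives $\eta_i^{(k)}(a_w)=0$ for $k\le r-1$ — provided there are enough points of $X_\Phi$ near $a_w$, which there are, e.g. the left endpoints of the gaps $I_{w0^m}$, whose diameters shrink geometrically. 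With the lower-order terms gone, $\theta_i(w)$ equals just the remainder, giving $|\theta_i(w)|\le C|I_w|^{r-1+\alpha}$.

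The second statement is then a formal consequence of the first: if $\Phi$ is $C^\infty$, then for every integer $r\ge 1$ it is in particular $C^{r,\alpha}$ for any $0<\alpha\le 1$ (take $\alpha=1$, so $C^{r,1}$, which has regularity $r+1$ and certainly implies $C^{r,\alpha'}$ for all $\alpha'$); applying part (1) with exponent $r-1+1=r$ gives a constant $C(r)>0$ with $|\theta_i(w)|<C(r)|I_w|^{r}$ for all $w\in\mathcal{W}$, as desired. The uniform convergence $\theta_i(w)\to 0$ as $|w|\to\infty$ was already established in Proposition \ref{prop_classification_1} and also follows from either bound together with $|I_w|<\mu^{|w|}$.

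I expect the main obstacle to be the vanishing-of-derivatives step, $\eta_i^{(k)}(a_w)=0$ for $1\le k\le r-1$: one has to be careful that $X_\Phi$ accumulates at $a_w$ densely enough (geometrically shrinking scales) to run the iterated mean value / finite-difference argument, and to get the \emph{uniform} constant $C$ one needs the remainder estimate to be uniform in $w$, which follows from the uniform $\alpha$-Hölder bound on $f_i^{(r)}$ on $[0,1]$ but should be spelled out. The integration and collection-of-terms are routine once that step is in place.
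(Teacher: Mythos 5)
Your proposal is correct and follows essentially the same route as the paper: the paper also writes $\theta_i(w)$ as the average of $\eta_i=f_i'-\lambda$ over $I_w$, asserts that $\eta_i^{(k)}$ vanishes on $X_\Phi$ for $k=0,\dots,r-1$ (which, as you note, rests on $X_\Phi$ being perfect so that derivatives of a function vanishing on it also vanish there), deduces the pointwise bound $|\eta_i(s)|\leq C\,|s-a_w|^{r-1+\alpha}$ from the $\alpha$-H\"older continuity of $\eta_i^{(r-1)}$, integrates over $I_w$, and handles the $C^\infty$ case by reduction to $C^{r,1}$. The only quibble is your illustrative choice of accumulating points (the left endpoints of the gaps $I_{w0^m}$ accumulate at the left endpoint of $K_w$, not at $a_w$; one should use, say, the gaps $I_{w01^m}$), but this does not affect the argument since every point of $X_\Phi$ is an accumulation point.
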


\begin{proof}
For every  $w \in \mathcal{W}$ let us abuse notation and write $a_w<b_w$ for the extremities of the gap $I_w$. We retain the  notations as in the  proof of Proposition \ref{prop_classification_1}. In particular, write   $\theta_i(w)$ as the integral over the gap $I_w$ of $\eta_i(s):=f_i'(s)-f_i'(a_w)$, where $\lambda=f_i'(a_w)$. 
Now, that $|\theta_i(w)|\to 0$ uniformly in $|w|\to\infty$ has already been shown along the proof of Proposition \ref{prop_classification_1}. In addition we have:
\begin{enumerate}
\item If $\Phi$ is of class $C^{r,\alpha}$ with $1\leq r <\infty$ and $0<\alpha\leq 1$ then $\eta_i$ is of class $C^{r-1,\alpha}$ and $\eta_i^{(k)}(x)=0$, for every $k=0,\dots,r-1$ and $x\in X$. This implies that 
$$|\eta_i(s)|\leq C\cdot|s-a_w|^{r-1+\alpha} \text{ for every } a_w<s<b_w, \text{  where } C=A\cdot(\prod_{i=1}^{r-1}(\alpha+i))^{-1},$$
and  $A = \text{ the H\"older constant of } \eta_i^{(r-1)}.$ Integrating on $I_w=[a_w,b_w]$ we obtain that 
$$|\theta_i(w)|\leq C\cdot|I_w|^{r-1+\alpha}.$$
\item If $\Phi$ is of class $C^\infty$ then it is of class $C^{r,1}$ for every $1\leq r<\infty$, so the statement follows by applying the previous case on each degree of regularity. 
\end{enumerate}
\end{proof}

\begin{rem}
\label{rmk_exponential_convergence}
In any case, in regularity $s=r+\alpha>1$ we have that $|\theta_i(w)|\to 0$ exponentially fast in $|w|\to\infty$. To check this, recall that $|I_w|\leq K\cdot \mu^{|w|}$ for some $K>0$ and $0<\mu<1$ (cf. Section \ref{sec_preliminaries}), and hence in regularity $s=r+\alpha>1$ we have $|\theta_i(w)|<(CK^{s-1})\cdot \mu^{(s-1)|w|}$ that converges to zero exponentially fast with $|w|\to\infty$. 
\end{rem}

%--------------------------------------------------------------------------------------------------------------------------------------

\subsection{The key construction}
The next proposition, which is essentially the converse of the implication discussed in Remark \ref{rmk_exponential_convergence}, is the key to our construction of pseudo-affine IFS in Theorem \ref{Theorem A}.

\begin{prop}\label{prop_classification_3}
Fix $0<\lambda< 1/2$. Suppose  a given pair of functions $\theta_i:\mathcal{W}\to\R$, $i=0,1$ satisfy that:
\begin{enumerate}[(a)] % (a), (b), (c), ...
\item $|\theta_i(w)|\to 0$ uniformly in $|w|\to\infty$; and,

\item $0<\lambda+\theta_i(w)<1$ $\forall$ $w\in\mathcal{W}$
\end{enumerate}
Then there exists a pseudo-affine hyperbolic IFS $\Phi$ of class $C^1$ with dynamical proportions $\lambda_i=\lambda+\theta_i$. Moreover: 
\begin{enumerate}
\item If $|\theta_i(w)|<C\cdot|I_w|^{r-1+\alpha}$ for some $1\leq r<\infty$, $0<\alpha\leq 1$ and $C>0$, then $\Phi$ is of class $C^{r,\alpha}$;
\item If for every $1\leq r<\infty$ there exists $C(r)>0$ such that $|\theta_i(w)|<C(r)\cdot|I_w|^{r}$, then $\Phi$ is of class $C^\infty$;
\end{enumerate}
%\hl{In any case, if $|\theta_i(w)|\to 0$ exponentially fast then $\Phi$ has regularity $s=r+\alpha >1$.}
\end{prop}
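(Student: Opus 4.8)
The strategy is to build the Cantor set first, using Lemma \ref{lemma_Cantor_proportions}, and only afterwards construct the IFS maps and verify their regularity. The first task is to define the cocycle $\Psi$ from the prescribed proportions $\lambda_i = \lambda + \theta_i$ via equation \eqref{equation_defn_Psi}, and to check the summability hypothesis $1 < \sum_{w\in\mathcal{W}}\Psi(w) < \infty$ of Lemma \ref{lemma_Cantor_proportions}. Here condition (a) is the crucial input: since $|\theta_i(w)|\to 0$ uniformly in $|w|$, there is $N$ beyond which $\lambda + \theta_i(w) < \lambda' := \lambda + \varepsilon < 1/2$; choosing $\varepsilon$ small and splitting the product defining $\Psi(w_1\cdots w_n)$ into the first $N$ factors (bounded by a constant) and the rest (each bounded by $\lambda' < 1/2$), one gets $\Psi(w) \le M\cdot (\lambda')^{|w|-N}$ for $|w| > N$, and since there are $2^n$ words of length $n$ and $2\lambda' < 1$, the sum converges. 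That the sum exceeds $1$ can be arranged (or, as in the paper's convention, built into the normalization $L = (\sum\Psi)^{-1}$, which yields $|I_w| = \Psi(w)L$). So Lemma \ref{lemma_Cantor_proportions} delivers a zero-measure binary Cantor set $X$ with $\lambda_0,\lambda_1$ as its dynamical proportions and explicit gap lengths $|I_w| = \Psi(w)L$.

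The second task is to manufacture the IFS maps $f_0, f_1$ on $[0,1]$ of class $C^1$ realizing $X$ as attractor and pseudo-affine with slope $\lambda$. On the Cantor set $X$ itself there is a natural candidate: $f_i$ should send $x_a \mapsto x_{ia}$, i.e. it maps the cylinder structure $X_w \mapsto X_{iw}$ and the gaps $I_w \mapsto I_{iw}$. On $X$ we want $f_i' \equiv \lambda$; on each gap $I_w$ we interpolate so that $f_i$ maps the interval $K_w$ affinely-ish onto $K_{iw}$, but with derivative matching the target value $\lambda$ at the endpoints of $I_w$ (which lie in $X$). The cleanest way to do this is to define $f_i'$ directly as a continuous function on $[0,1]$: set $f_i' = \lambda$ on $X$, and on each complementary gap $I_w = (a_w, b_w)$ interpolate between the two endpoint values (both equal to $\lambda$) by any continuous profile whose integral over $I_w$ equals $|I_{iw}| = \lambda_i(w)|I_w|$; e.g. a bump-modified constant $\lambda + c_w\phi((t-a_w)/|I_w|)$ where $\phi$ is a fixed smooth bump with $\int_0^1\phi = 1$ supported in a subinterval, and $c_w$ is chosen so the integral matches — explicitly $c_w |I_w| = (\lambda_i(w) - \lambda)|I_w| = \theta_i(w)|I_w|$, so $c_w = \theta_i(w)$. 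One checks $f_i' > 0$ (using $\lambda + \theta_i < 1$ and a uniform lower bound from (a)), that $f_i := $ the antiderivative starting at the correct endpoint is a bijection of the right intervals, and continuity of $f_i'$ on $[0,1]$ follows because $\theta_i(w)\to 0$ uniformly so the bumps shrink as the gaps shrink. A short standard argument then shows $X = X_\Phi$ and $\Phi$ is hyperbolic.

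The third task — and the real content — is the regularity upgrade, parts (1) and (2). Here one must show that if $|\theta_i(w)| < C|I_w|^{r-1+\alpha}$ then the $f_i$ constructed above are actually $C^{r,\alpha}$, not merely $C^1$. The point is that $f_i'(t) - \lambda = \theta_i(w)\phi((t-a_w)/|I_w|)$ on the gap $I_w$, so the $k$-th derivative of $f_i'$ on $I_w$ has size $\lesssim |\theta_i(w)| \cdot |I_w|^{-k} \lesssim C|I_w|^{r-1+\alpha-k}$, which tends to $0$ as $|w|\to\infty$ precisely when $k \le r-1$; and the $(r-1)$-th derivative is $\alpha$-Hölder with the right constant. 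The delicate part is matching derivatives \emph{across} points of $X$: one must verify that the function on $[0,1]$ obtained by gluing these gap-profiles (and the constant $\lambda$ on $X$) is genuinely $C^{r-1,\alpha}$ at points of $X$, which are accumulation points of infinitely many gaps. This is where I expect the main obstacle to lie: a point $x\in X$ is approached by gaps $I_w$ with $|w|\to\infty$ and $|I_w|\to 0$, and one needs a Whitney-type / Taylor-estimate argument showing that because $f_i^{(k)} - [\text{the putative derivative}] \to 0$ fast enough relative to the distance scale $|I_w|$, all higher derivatives exist and are Hölder at $x$. The geometric input making this work is that the gaps are comparable to the cylinders (bounded geometry, from hyperbolicity) and decay exponentially, so the polynomial gain $|I_w|^{r-1+\alpha}$ dominates any polynomial loss $|I_w|^{-k}$ for $k \le r-1$. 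Part (2) is then immediate: $C^\infty$ follows by running part (1) for every $r$, since the hypothesis gives the required decay at every polynomial rate.
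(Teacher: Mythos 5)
Your proposal is correct and follows essentially the same route as the paper: verify the summability hypothesis of Lemma \ref{lemma_Cantor_proportions} using the uniform smallness of $\theta_i$ to get the Cantor set, define $f_i'$ as $\lambda$ on $X$ plus rescaled bumps $\theta_i(w)\,\rho((t-a_w)/|I_w|)$ on the gaps, integrate, and upgrade regularity via the estimate $\|\partial^k\varphi_w^i\|_\infty\lesssim|\theta_i(w)|\,|I_w|^{-k}\lesssim|I_w|^{r-1+\alpha-k}$. The ``delicate gluing at points of $X$'' you flag is handled in the paper exactly as you anticipate, by showing the partial sums of $\partial^{r-1}\varphi_w^i$ are $\alpha$-H\"older with a constant uniform in $n$ and passing to the uniform limit.
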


\begin{proof}
Consider the pair $\lambda_i:\mathcal{W}\to(0,1)$, $i=0,1$ defined as $\lambda_i(w)=\lambda+\theta_i(w)$. We first verify that $\{\lambda_0,\lambda_1\}$ satisfies the admissibility condition of Lemma \ref{lemma_Cantor_proportions}:

\begin{cl}\label{claim_1-prop_classification_2}
We have
\begin{align*}
1<\sum_{w\in\mathcal{W}}\Psi(w)<\infty,\ \text{where}\ \Psi(w)=\prod_{k=1}^{|w|}\lambda_{w_{k}}(w_{k+1}\cdots w_{|w|}).
\end{align*}
\end{cl}

\begin{proof}[Proof of Claim \ref{claim_1-prop_classification_2}]
Pick $\varepsilon>0$ such that $\lambda+\varepsilon<1/2$. Since $\theta_i\to 0$ uniformly there exists $N_0>0$ such that if $|w|>N_0$ then $|\theta_i(w)|<\varepsilon$. Then there are constants $C_0,C_1>0$ such that
\begin{align*}
1<\sum_{n=0}^{\infty}
\sum_{|w|=n}\Psi(w) &=\sum_{n=0}^{\infty}\sum_{|w|=n}\left(\prod_{k=1}^{|w|}\lambda_{w_{k}}(w_{k+1}\cdots w_n)\right)\\
	&=C_0+\sum_{n=N_0}^\infty\sum_{|w|=n}\left(\prod_{k=1}^{N_0}\lambda_{w_{k}}(w_{k+1}\cdots w_n)\cdot\prod_{k=N_0+1}^{n}(\lambda+\theta_i(w_{k+1}\cdots w_n))\right)\\
	&\leq C_0+\sum_{n=N_0}^\infty\sum_{|w|=n}\left(\prod_{k=1}^{N_0}\lambda_{w_{k}}(w_{k+1}\cdots w_n)\cdot\prod_{k=N_0+1}^{n}(\lambda+\varepsilon)\right)\\
	&\leq C_0+C_1\cdot \sum_{n=N_0}^\infty\sum_{|w|=n}\left(\prod_{k=1}^{n}(\lambda+\varepsilon)\right)=C_0+C_1\cdot\sum_{n=N_0}^\infty[2(\lambda+\varepsilon)]^n<\infty. 
\end{align*}
\end{proof}

By Lemma \ref{lemma_Cantor_proportions}, there exists a binary Cantor set $X\subset[0,1]$ with a marking $\{I_w\}_{w\in\mathcal{W}}$ and proportions satisfying $|I_{iw}|/|I_w|=\lambda_i(w)$. We want to show that $X$ is the attractor of an hyperbolic IFS $\Phi=\left\lbrace f_0,f_1\right\rbrace$ of class $C^1$. To this end, we will first construct the derivatives $f_i':[0,1]\to\R$ and then obtain the branches of $\Phi$ by integration. Each derivative $f_i'\in C^0([0,1])$ will be obtained as an infinite sum
\begin{equation}\label{derivatives_of_branches-prop_classification_2}
f_i'(t)=\lambda+\sum_{w\in\mathcal{W}}\varphi_w^i(t),
\end{equation}
where each $\varphi_w^i:[0,1]\to\R$ is a continuous functions with support in $\overline{I_w}$ as in Figure \eqref{fig_prop_classification_2}.  The functions $\varphi_w^i$ will be obtained by re-scaling the domain and range of the graph of a fixed smooth function $\rho:[0,1]\to\R$ in the following manner: Fix a function $\rho:[0,1]\to[0,+\infty)$ of class $C^\infty$ satisfying that:
\begin{itemize}
\item $\frac{d^k}{d t^k}\rho(0)=\frac{d^k}{d t^k}\rho(1)=0$, for every $k\geq 0$;
\item $\int_0^1\rho(t)dt=1$.
\end{itemize}
Given an interval $J=[a,b]$ and a constant $\alpha\in\R$, define a smooth function $\rho_{(\alpha,J)}:\R\to\R$ with support in $J$ and $\int_J\rho(t)dt=\alpha\cdot|J|$ by 
\begin{equation*}
\rho_{(\alpha,J)}(t)=\alpha\cdot\rho\left(\frac{t-a}{b-a}\right).
\end{equation*}

\begin{figure}[t]
\begin{center}
\includegraphics[scale=0.5]{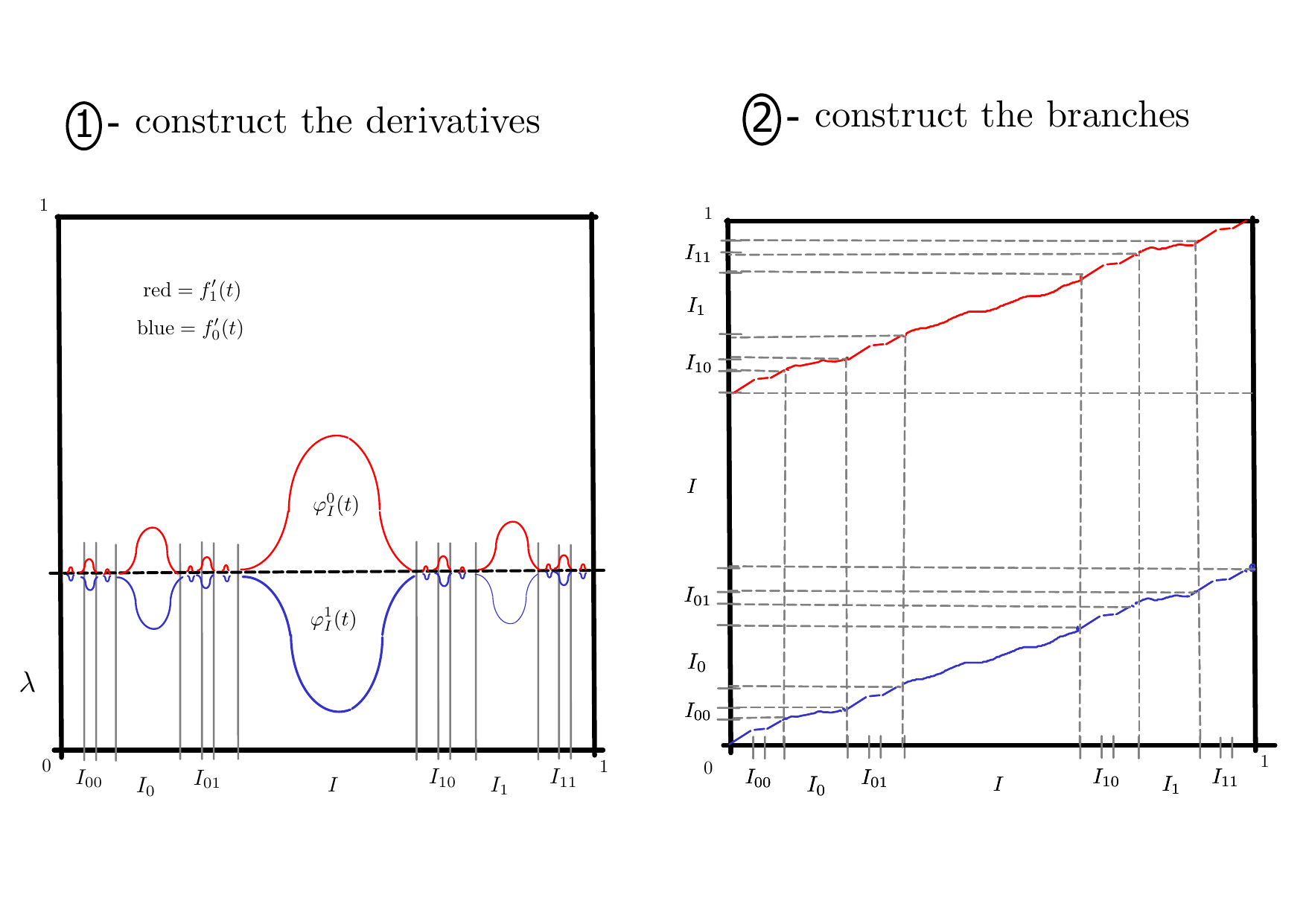}
\label{fig_prop_classification_2}
\caption{The proportions $\lambda_i:\mathcal{W}\to(0,1)$, $i=0,1$ define a unique Cantor set $X$ with a marking $\{I_w\}_{w\in\mathcal{W}}$ of its gaps such that $|I_{iw}|/|I_w|=\lambda_i(w)$. We see here how to define an IFS $\Phi=\left\lbrace f_0,f_1 \right\rbrace$ that preserves this cantor set.}
\end{center}
\end{figure}

\begin{cl}\label{claim_2-prop_classification_2}
\emph{
If we choose $\varphi_w^i\coloneqq\rho_{(\theta_i(w),I_w)}$ for each gap $I_w$ and $i=0,1$, then the pair of functions $\{f_0',f_1'\}$ given by the expression 
\begin{equation*}
f_i'(t)= \left\{ \begin{array}{ccc} \lambda+\varphi_w^i(t) & \text{if}\ & t\in I_w,\ w\in\mathcal{W};\\ 
\\ \lambda & \text{if}\ & t\in X. \\
\end{array} \right.
\end{equation*}
satisfies that:
\begin{enumerate}
\item Each $f_i':[0,1]\to\R$ is well-defined and continuous;
\item The maps $f_i:[0,1]\to[0,1]$ given by: \newline
\noindent $f_0(t)=\int_0^t f_0'(s)ds$; and, \\
\noindent $f_1(t)=\tau+\int_0^t f_1'(s)ds$, where $\tau=1-\int_0^1 f_1'(s)ds$;\\
form a  $C^1$ hyperbolic IFS $\Phi$ such that  $X = X_\Phi$.
\end{enumerate}
}
\end{cl}

\begin{proof}[Proof of Claim \ref{claim_2-prop_classification_2}]
The functions $f_i'$ are well-defined because they are set to be constant $=\lambda$ along the boundary of the domain of definition, which is the Cantor set $X$. If we write
\begin{equation*}
f_i'(t)=\lambda+\lim_{n\to\infty}\sum_{|w|\leq n}\varphi_w^i(t)
\end{equation*}
then for $n<m$ we have that:
\begin{equation*}
\left\Vert\sum_{|w|=n}^m \varphi_w^i \right\Vert_{\infty}
=\sup_{n\leq |w|\leq m}\left\{ \Vert\varphi_w^i\Vert_\infty\right\}
\leq \Vert\rho\Vert_\infty\cdot\sup_{n\leq |w|\leq m}\left\{|\theta_i(w)|\right\},
\end{equation*}
which converges to zero with $n\to\infty$ due to the uniform convergence $|\theta_i(w)|\to 0$. Hence the functions $f_i'$, $i=0,1$ are continuous, being the limit of a Cauchy sequence of continuous functions. 

For the $f_i:[0,1]\to[0,1]$, $i=0,1$ defined above it is clear that both are $C^1$ contractions satisfying $f_0(0)=0$ and $f_1(1)=1$. To check that the Cantor set $X$ is invariant under the IFS $\Phi=\left\lbrace f_0,f_1\right\rbrace$, consider a point $x_a\in X$ with coding $a=(a_n)_{n\geq 1}$ and consider the points $x_{ia}\in X,\ i=0,1$ where $ia=( ia_1 a_2 a_3 \cdots )$. Given a finite word $w=w_1\cdots w_n$ and a sequence $a= (a_n)_{n\geq 1}\in\{0,1\}^\mathbb{N}$ we denote $w<a$ if the gap $I_w$ is located to the left of the point $x_a$, or $a<w$ otherwise. Since the Cantor set is constructed by iteratively removing the gap $I_w$ from a closed interval $K_w$, $w\in\mathcal{W}$, then $iw<ia$ if and only if $w<a$, for both $i=0,1$. Using that the Cantor set $X$ has zero Lebesgue measure we get that the points $x_a, x_{0a}, x_{1a}$ are determined by the expressions
\begin{align*}
x_a         &=|x_a|   =\sum_{w< a}|I_w|=1-\sum_{a< w}|I_w|,\\
x_{0a}      &=|x_{0a}|=\sum_{w< 0a}|I_w|=\sum_{w< a}|I_{0w}|,\\
x_{1a}      &=|x_{1a}|=1-\sum_{1a< w}|I_w|=1-\sum_{a< w}|I_{1w}|.
\end{align*}

On the other hand, for each $i=0,1$ let $y_i=f_i(x_a)$. Then
\begin{align*}
y_0      &=\int_0^{x_a} f_0'(t)dt =\int_{[0,{x_a}]}\left(\sum_{w\in\mathcal{W}}(f_0')|_{I_w}\right)dt=\sum_{w< a}\int_{I_w}f_0'(t)dt,\\
y_1      &=\tau+\int_0^{x_a} f_1'(t)dt =1-\int_{[x_a,1]}\left(\sum_{w\in\mathcal{W}}(f_1')|_{I_w}\right)dt=1-\sum_{a< w}\int_{I_w}f_1'(t)dt;
\end{align*}
Since $\int_{I_w}\varphi_w^i(t)dt=\theta_i(w)|I_w|$ then by construction we have that 
$$\int_{I_w}f_i'(t)dt=\int_{I_w}(\lambda+\varphi_w^i(t))dt=(\lambda+\theta_i(w))|I_w|=\lambda_i(w)|I_w|.$$
It follows that
\begin{align*}
y_0      &=\sum_{w< a}\lambda_0(w)|I_w|=\sum_{w< a}|I_{0w}|=x_{0a},\\
y_1      &=1-\sum_{a< w}\lambda_1(w)|I_w|=1-\sum_{a< w}|I_{1w}|=x_{1a}.
\end{align*}
By the last two expressions we see that $y_i\in X$ for both $i=0,1$. Therefore $\Phi$ preserves $X$, so $X_\Phi \subseteq X$; that $X=X_\Phi$ follows by iterating this  argument to show that every point coded by the IFS belongs to $X$ (recall that $X$ is closed by definition). The proof is complete. 
\end{proof}

\begin{rem} In this Remark we study the regularity of the IFS $\Phi$ obtained above. Write $f_i'=\lambda+\lim_{n\to\infty}\sum_{|w|\leq n}\varphi_w^i$ where $i=0,1$. 

\begin{enumerate}
\item Suppose there exist $C>0$ and $1\leq r<\infty$, $0<\alpha\leq 1$ such that $|\theta_i(w)|<C\cdot|I_w|^{r-1+\alpha}$, $\forall w\in\mathcal{W}$. We first show  that for every $k=0,\dots,r-1$ the $k$-th derivative of $\sum_{|w|=1}^n \varphi_w^i$   is a Cauchy sequence. By construction we have that
\begin{equation}
\left\Vert\partial^k\varphi_w^i\right\Vert_\infty=\left\Vert\frac{d^k}{dt^k}\left[\theta_i(w)\rho\left(\frac{t-a_w}{b_w-a_w}\right)\right]\right\Vert_\infty\leq \frac{\vert\theta_i(w)\vert}{|I_w|^k}\cdot\left\Vert\partial^k \rho\right\Vert_\infty\leq C\cdot|I_w|^{r-1+\alpha-k},\ \forall\ k=0,\dots,r-1;
\end{equation}
then for $n<m$ we have 
\begin{equation*}
\left\Vert\sum_{|w|=n}^m \partial^k\varphi_w^i \right\Vert_{\infty}
=\sup_{n\leq |w|\leq m}\left\{ \Vert \partial^k\varphi_w^i\Vert_\infty\right\}
\leq C\cdot \sup_{n\leq |w|\leq m}\left\{ |I_w|^{r-1+\alpha-k}\right\},
\end{equation*}
which converges to zero when $n\to\infty$ for $k=0,\dots,r-1$. This implies that $f_i$ is of class $C^r$ and 
$$\partial^{k+1} f_i =\lim_{n\to\infty}\sum_{|w|\leq n}\partial^{k} \varphi_w^i,\ \forall\ k=0,\dots,r-1.$$ 

We now show that for each $i=0,1$ the derivatives $\partial^{r} f_i:[0,1]\to\R$ are $\alpha$-H\"older functions. First, observe that for each finite word $w$ the function $\partial^{r-1}\varphi_w^i$ is $\alpha$-H\"older on the closure of the gap $I_w=(a_w,b_w)$. To check this observe that $\partial^{r-1}\rho$ is an $\alpha$-H\"older function for some constant $A$, and so for two points $a_w\leq t,u\leq b_w$ we have that
\begin{equation*}
\vert\partial^{r-1}\varphi_w^i(u)-\partial^{r-1}\varphi_w^i(t)\vert\leq C\cdot|I_w|^\alpha\cdot\left\vert\partial^{r-1}\rho\left(\frac{u-a_w}{b_w-a_w}\right)-\partial^{r-1}\rho\left(\frac{t-a_w}{b_w-a_w}\right)\right\vert<(CA)\cdot\vert u-t\vert^{\alpha}. 
\end{equation*}  
Next, observe that for every $n\geq 1$ the function $\psi_n:[0,1]\to\R$ defined by $\psi_n=\sum_{|w|\leq n}\partial^{r-1}\varphi_w^i$ is $\alpha$-H\"older. To check this, take two points $0\leq t< u\leq 1$ on the support of $\psi_n$ and assume that $t$ and $u$ belong to different gaps $I_v=(a_v,b_v)$ and $I_w=(a_w,b_w)$, $|v|,|w|\leq n$. Then
\begin{align*}
\vert \psi_n(u)-\psi_n(t) \vert
& \leq\vert \partial^{r-1}\varphi_w^i(u)-\partial^{r-1}\varphi_w^i(a_w)\vert+\vert\partial^{r-1}\varphi_w^i(a_w)-\partial^{r-1}\varphi_w^i(b_v)\vert\\
& +\vert\partial^{r-1}\varphi_w^i(b_v)-\partial^{r-1}\varphi_w^i(t)\vert\\
& \leq (CA)\cdot\left(|u-a_w|^\alpha+|a_w-b_v|^\alpha+|b_v-t|^\alpha\right)\leq (K_\alpha CA)\cdot|t-u|^\alpha,
\end{align*}
where $K_\alpha$ is taken to satisfy $a^\alpha+b^\alpha+c^\alpha\leq K_\alpha\cdot (a+b+c)^\alpha$, for every $0,\leq a,b,c\leq 1$. In the same way, it is possible to check that $\vert \psi_n(u)-\psi_n(t) \vert\leq (K_\alpha CA)|t-u|^{\alpha}$ for every $t,u\in[0,1]$. Finally, since the sequence of functions $\psi_n:[0,1]\to\R$ converges uniformly to $\partial^{r}f_i$ and each $\psi_n$ is $\alpha$-H\"older with constant $K_\alpha CA>0$ (independent of $n\geq 1$) we conclude that $\partial^r f_i$ is $\alpha$-H\"older with the same constant. 

\item If the condition $|\theta_i(w)|<C(r)\cdot|I_w|^r$, $\forall\ w\in\mathcal{W}$ with $C(r)>0$ is satisfied for every $r\geq 1$, then by the previous item we have that the $r$-th derivative of $f_i$, $i=0,1$ exists and is continuous for every $r\geq 1$. Thus, the contractions $f_i$ are $C^\infty$. 
\end{enumerate}

\end{rem}
Taking tally of the previous claims and remarks, Proposition \ref{prop_classification_3} is proved.
\end{proof}

%--------------------------------------------------------------------------------------------------------------------------------------

\subsection{$C^s$-conjugacy classes of pseudo-affine hyperbolic IFSs}
\label{sec_conjugacy_classes}
Let $0<\lambda<1/2$ and fix $1\leq s\leq \infty$ or $s=\omega$. In this section we describe the $C^s$-conjugacy classes in the family of pseudo-affine hyperbolic IFSs of slope $\lambda$. 

For $1\leq s<\infty$ denote by $\mathcal{E}_s$ the set of pairs $(\theta_0,\theta_1)$, where $\theta_i:\mathcal{W}\to\R$ are functions satisfying:
\begin{enumerate}[(i)]
\item $0<\lambda+\theta_i(w)<1$, $\forall\ w\in\mathcal{W}$ and $i=0,1$,
\item $|\theta_i(w)|\to 0$ uniformly in $|w|\to\infty$, 
\item $\exists\ C>0$ such that $|\theta_i(w)|\leq C\cdot\left(\Psi_{(\theta_0,\theta_1)}(w)\right)^{s-1}$, $\forall\ w\in\mathcal{W}$ and $i=0,1$,
\end{enumerate} 
and $\Psi_{(\theta_0,\theta_1)}:\mathcal{W}\to(0,1)$ is as in Lemma \ref{lemma_Cantor_proportions} defined by the expression 
$$\Psi_{(\theta_0,\theta_1)}(w_1\cdots w_n)=\left(\prod_{i=1}^{n-1}(\lambda+\theta_{w_i}(w_{i+1}\cdots w_n))\right)\cdot(\lambda+\theta_{w_n}(e)),\ \forall\ w\in\mathcal{W}.$$
For $s=\infty$ denote by $\mathcal{E}_\infty$ the set of pairs satisfying (i) and (ii) above, and satisfying:
\begin{itemize}
\item[(iv)] $\forall\ s\geq 1$, $\exists\ C_s>0$ such that $|\theta_i(w)|\leq C_s\cdot\left(\Psi_{(\theta_0,\theta_1)}(w)\right)^{s-1}$, $\forall\ w\in\mathcal{W}$ and $i=0,1$.
\end{itemize}
Finally, for $s=\omega$ let $\mathcal{E}_\omega$ be the set containing only the trivial pair $(\theta_1\equiv 0,\theta_2\equiv 0)$.

\begin{defn}
For every $1\leq s\leq \infty$ or $s=\omega$, we define an equivalence relation $\sim$ on the set $\mathcal{E}_s$ by setting that $(\theta_0,\theta_1)\sim(\eta_0,\eta_1)$ if and only if there exists a continuous function $\chi:\{0,1\}^\N\to(0,+\infty)$ such that, for every sequence $a=(a_n)_{n\geq 1}$ in $\{0,1\}^\N$, we have that
\begin{equation}
\label{equation_condition-diff}
\chi(a)=\lim_{n\to+\infty}\frac{\Psi_{(\eta_0,\eta_1)}(a_1\cdots a_n)}{\Psi_{(\theta_0,\theta_1)}(a_1\cdots a_n)}.
\end{equation}
\end{defn}
It is straightforward to check that $\sim$ is indeed an equivalence relation on the set $\mathcal{E}_s$. The function $\chi$ can be interpreted as the "derivative’" of a conjugation defined along symbolic cylinders, even when the conjugation is \emph{a priori} not known to be differentiable. When the limit defining $\chi$ exists and remains bounded, it encodes the actual derivative of the conjugation, thereby characterizing the existence and  regularity  of a smooth conjugation. Formally:

\begin{thm} \label{Theorem 27}
For every $0<\lambda<1/2$ and $1\leq s\leq\infty$ or $s=\omega$, there is a one-to-one correspondence between the set of $C^s$-conjugacy classes of pseudo-affine hyperbolic IFSs of class $C^s$ with slope $=\lambda$, and the set of equivalence classes determined by $\sim$ on $\mathcal{E}_s$.
\end{thm}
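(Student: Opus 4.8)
The plan is to build the correspondence in three stages: first from $C^s$-conjugacy classes of pseudo-affine IFSs to equivalence classes in $\mathcal{E}_s$, then the reverse direction, and finally checking that the two constructions are mutually inverse.

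First I would show that a pseudo-affine hyperbolic IFS $\Phi$ of class $C^s$ with slope $\lambda$ determines a well-defined element of $\mathcal{E}_s$. By Proposition \ref{prop_classification_1} the attractor $X_\Phi$ has asymptotically constant dynamical proportions, giving functions $\theta_i = \lambda_i - \lambda$ that satisfy conditions (i) and (ii). By Lemma \ref{lemma_Cantor_proportions} we have $|I_w| = \Psi_{(\theta_0,\theta_1)}(w)\cdot(\sum_u \Psi(u))^{-1}$, so $\Psi_{(\theta_0,\theta_1)}(w)$ is comparable to $|I_w|$ up to a fixed constant; Proposition \ref{prop_calssification_2} then yields exactly condition (iii) (for $1\le s<\infty$) or (iv) (for $s=\infty$), while for $s=\omega$ analyticity forces $\theta_i \equiv 0$ by a unique-continuation argument on the perfect set $X_\Phi$. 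Next I would verify that two $C^s$-conjugate IFSs give $\sim$-equivalent pairs: if $h$ is the conjugating diffeomorphism then $h$ maps gaps of $X_\Phi$ to gaps of $X_{\widetilde\Phi}$ respecting the labelling, so $|\widetilde I_{a_1\cdots a_n}| / |I_{a_1\cdots a_n}|$ is the ratio of lengths of two nested intervals both shrinking to the point $x_a$; by the mean value theorem this ratio converges to $h'(x_a) > 0$, and one checks this limit equals the limit in \eqref{equation_condition-diff} (using again that $\Psi$ and $|I_w|$ are comparable on each side up to constants). Continuity of $a \mapsto \chi(a) = h'(x_a)$ follows from continuity of $h'$ and of $a\mapsto x_a$. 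So we get a well-defined map from conjugacy classes to $\mathcal{E}_s/\!\sim$.

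For the reverse direction, given $(\theta_0,\theta_1)\in\mathcal{E}_s$, Proposition \ref{prop_classification_3} produces a pseudo-affine hyperbolic IFS $\Phi$ of class $C^s$ (for $s=\omega$ this is just the self-similar IFS of slope $\lambda$) with dynamical proportions $\lambda + \theta_i$; passing to its conjugacy class gives the inverse map, \emph{once we know it is well-defined on $\sim$-classes}. This is the crux: I must show that if $(\theta_0,\theta_1)\sim(\eta_0,\eta_1)$, with witnessing function $\chi$, then the resulting IFSs $\Phi$ and $\widetilde\Phi$ are $C^s$-conjugate. Here I would invoke the rigidity theorem of Bedford--Fisher--Sullivan (stated in the excerpt): it suffices to produce a $C^1$ conjugacy, because both IFSs have regularity $s \ge 1+\alpha$ and any $C^1$ conjugacy then upgrades automatically to $C^s$. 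To build the $C^1$ conjugacy, define $h$ on $X_\Phi$ by $h(x_a) = \widetilde x_a$ (the labelling-preserving homeomorphism from Section \ref{Section: self conformal}), and extend it affinely across each gap $I_w$ to the corresponding gap $\widetilde I_w$. Then $h$ is a homeomorphism of $[0,1]$; on the interior of each gap $h' $ is the constant $|\widetilde I_w|/|I_w| = \widetilde\Psi(w)/\Psi(w)$ up to the fixed normalizing constants. Condition \eqref{equation_condition-diff} says precisely that these gap-slopes, along any nested sequence of gaps converging to a point $x_a \in X_\Phi$, converge to $\chi(a)$; combined with the uniform (exponential, by Remark \ref{rmk_exponential_convergence}) control on $|\theta_i|, |\eta_i|$ one shows $h$ is differentiable at every point of $X_\Phi$ with derivative $\chi(a)>0$ and that $h'$ extends continuously across $[0,1]$ — so $h \in C^1$ and is a diffeomorphism since $h' > 0$ everywhere. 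Finally $h \circ f_i = \widetilde f_i \circ h$ on $X_\Phi$ by construction (both map $x_a \mapsto \widetilde x_{ia}$), and since this is a relation between $C^1$ maps that holds on the perfect set $X_\Phi$... actually one checks it on all of $[0,1]$ directly from the explicit affine-on-gaps formulas; in any case, a $C^1$ conjugacy between the IFSs results.

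The last step is to check the two maps are mutually inverse. Starting from $\Phi$, extracting $(\theta_0,\theta_1)$, and re-running Proposition \ref{prop_classification_3} recovers a Cantor set with the \emph{same} dynamical proportions, hence (by uniqueness in Lemma \ref{lemma_Cantor_proportions}, and Leb$(X)=0$) the same Cantor set up to affine normalization, and the IFS is recovered up to $C^1$ (hence $C^s$) conjugacy — here the diffeomorphism $h$ of the previous paragraph with $\theta = \eta$ is literally affine on each gap and the identity on the Cantor set, so it is smooth, confirming the two IFSs are $C^s$-conjugate. Conversely, starting from $(\theta_0,\theta_1)$, building $\Phi$, and reading off its dynamical proportions returns $(\lambda+\theta_i) - \lambda = \theta_i$ exactly, by the computation at the end of the proof of Claim \ref{claim_2-prop_classification_2}. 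The main obstacle I anticipate is the $C^1$-regularity of the conjugacy $h$ built by affine interpolation across gaps: one needs to show differentiability of $h$ \emph{at points of the Cantor set} with the prescribed derivative $\chi(a)$, which requires carefully estimating $h(y)-h(x_a)-\chi(a)(y-x_a)$ for $y$ near $x_a$ by decomposing $[x_a,y]$ into gaps and using that the gap-slopes converge to $\chi(a)$ together with the exponential smallness of the $\theta$'s from Remark \ref{rmk_exponential_convergence}; everything else is bookkeeping with $\Psi$ and the order $\prec$, or an appeal to the already-cited rigidity theorem.
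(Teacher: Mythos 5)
Your proposal is correct and follows essentially the same route as the paper: reduce $C^s$-conjugacy to $C^1$-conjugacy via the Bedford--Fisher--Sullivan rigidity theorem, get surjectivity onto $\mathcal{E}_s$ from Propositions \ref{prop_calssification_2} and \ref{prop_classification_3}, and prove that $C^1$-conjugacy is equivalent to condition \eqref{equation_condition-diff} by identifying $\Psi_{(\eta_0,\eta_1)}(w)/\Psi_{(\theta_0,\theta_1)}(w)$ with the gap-length ratio $|J_w|/|I_w|$ (up to the normalizing constant $|I|/|J|$), using the mean value theorem in one direction and the incremental-quotient/weighted-average argument in the other. The only cosmetic difference is that you realize the conjugacy by explicit affine interpolation across gaps while the paper computes $h'$ directly on the Cantor set and leans on the standing identification of $C^s$-conjugacy of IFSs with $C^s$-diffeomorphism of their attractors --- which is also what rescues your parenthetical claim that $h\circ f_i=\widetilde f_i\circ h$ could be checked on gap interiors (it cannot, since $f_i$ is not affine there, but equality on $X_\Phi$ suffices).
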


\begin{proof}
Let's start by noting that the statement is trivial when $s=\omega$. As we have already remarked at Section \ref{sec_pseudo-affine_IFS}, every analytic pseudo-affine hyperbolic IFS with slope $\lambda$, coincides over the minimal invariant set with the self-similar IFS of slope $\lambda$, and therefore it has constant proportions $\lambda_i(w)=\lambda$, i.e. $\theta_i\equiv 0$, $i=0,1$.  

When $1\leq s\leq\infty$, given a hyperbolic pseudo-affine IFS $\Phi=\lbrace f_0,f_1 \rbrace$ of class $C^s$ and slope $\lambda$, write its dynamical proportions in the form $\lambda_i(w)=\lambda+\theta_i(w)$, $\forall\ w\in\mathcal{W}$, $i=0,1$. In Proposition \ref{prop_calssification_2} we showed that the correspondence $\Phi\mapsto(\theta_0,\theta_1)$ takes values in $\mathcal{E}_s$. In Proposition \ref{prop_classification_3} we showed how to construct, given any pair $(\theta_0,\theta_1)$ in $\mathcal{E}_s$, an hyperbolic pseudo-affine IFS $\Phi=\lbrace f_0,f_1 \rbrace$ of class $C^s$ with proportions $\lambda_i=\lambda+\theta_i(w)$, $\forall\ w\in\mathcal{W}$, $i=0,1$, and hence $\Phi\mapsto(\theta_0,\theta_1)$ is surjective.

Let $\Phi=\lbrace f_0,f_1 \rbrace$ and $\widetilde{\Phi}=\lbrace g_0,g_1 \rbrace$ be two pseudo-affine hyperbolic IFSs of class $C^s$ and slope $=\lambda$. Write their dynamical proportions in the form $\lambda_i^\Phi=\lambda+\theta_i$ and $\lambda_i^{\widetilde{\Phi}}=\lambda+\eta_i$, respectively. Let $X$ and $Y$ be their respective attractors, and recall from Section \ref{Section: self conformal} that, since they are binary Cantor sets, their points have a labeling $X=\{x_a:a\in\{0,1\}^\N\}$ and $Y=\{y_a:a\in\{0,1\}^\N\}$. We will denote by $\{I_w\}_{w\in\mathcal{W}}$ and $\{J_w\}_{w\in\mathcal{W}}$ the set of gaps in $[0,1]\setminus X$ and $[0,1]\setminus Y$.

As we remarked in Section \ref{Section: self conformal}, there always exist a homeomorphism $h:[0,1]\to[0,1]$ such that $h(X)=Y$ and $G_w\circ h(x)=h\circ F_w(x)$, $\forall\ x\in X$ and every finite word $w$ (here we use $G_w$ for cylinders in $Y$). Several conjugating homeomorphism $h$ can be defined, but all of them coincide over $X$ with the map taking $x_a\mapsto y_a$, $\forall a\in\{0,1\}^\N$. By means of the rigidity theorems of Bedford-Fisher \cite{BedfordFisher1997Ratio}, determining the regularity of $h:X\to Y$ reduces to determine whether $h$ is of class $C^1$ or not. Because, in the case $X$ and $Y$ are $C^s$-Hyperbolic Cantor sets with $1\leq s\leq \infty$, every $C^1$-diffeomorphism $h:X\to Y$ is actually a $C^s$-diffeomorphism. As a consequence, to conclude the theorem it is enough to prove the following claim.

\begin{cl}
\emph{
The homeomorphism $h:X\to Y$ is a $C^1$-diffeomorphism if and only if condition \eqref{equation_condition-diff} holds.
}
\end{cl}

To prove the claim, assume first that $h:X\to Y$ is a $C^1$-diffeomorphism. Without loss of generality, we can further assume that this map is the restriction onto $X$ of a $C^1$-diffeomorphism $h:[0,1]\to[0,1]$ (cf. Section \ref{sec_preliminaries}). For every sequence $a=(a_n)_{n\geq 1}$ in $\{0,1\}^\N$ we have that
\begin{align*}
\frac{\Psi_{(\eta_0,\eta_1)}(a_1\cdots a_n)}{\Psi_{(\theta_0,\theta_1)}(a_1\cdots a_n)} &= \frac{\Psi_{(\eta_0,\eta_1)}(a_1\cdots a_n)|J|}{\Psi_{(\theta_0,\theta_1)}(a_1\cdots a_n)|I|}\cdot\frac{|I|}{|J|}=\frac{|J_{a_1\cdots a_n}|}{|I_{a_1\cdots a_n}|}\cdot\frac{|I|}{|J|} =\frac{|h(I_{a_1\cdots a_n})|}{|I_{a_1\cdots a_n}|}\cdot\frac{|I|}{|J|} \\
&=\left(\frac{1}{|I_{a_1\cdots a_n}|}\int_{I_{a_1\cdots a_n}}h'(t)dt\right) \cdot\frac{|I|}{|J|}=h'(z_{a_1\cdots a_n})\cdot\frac{|I|}{|J|},
\end{align*}
where $z_{a_1\cdots a_n}$ is some point in the closure of the gap $I_{a_1\cdots a_n}$. As $n\to+\infty$, the distance between the gap $I_{a_1\cdots a_n}$ and the point $x_a\in X$ converges to zero, so in particular $z_{a_1\cdots a_n}\to x_a$ and 
$$\lim_{n\to+\infty}\frac{\Psi_{(\eta_0,\eta_1)}(a_1\cdots a_n)}{\Psi_{(\theta_0,\theta_1)}(a_1\cdots a_n)}=\lim_{n\to+\infty}h'(z_{a_1\cdots a_n})\cdot\frac{|I|}{|J|}=h'(x_a)\cdot\frac{|I|}{|J|}=\chi(a),$$
which is a continuous function.

Reciprocally, assume that \eqref{equation_condition-diff} holds and let $x_a,x_b$ be points in $X$ indexed by sequences $a,b\in\{0,1\}^\N$. To show that $h$ is differentiable over the Cantor set $X$ we have to study the limit of the incremental quotients
\begin{equation}
\label{eq_incremental_quotients}
\lim_{x_b\to x_a}\frac{h(x_b)-h(x_a)}{x_b-x_a}=\lim_{x_b\to x_a}\frac{y_b-y_a}{x_b-x_a}.
\end{equation}
Let's assume without loss of generality that $x_a$ can be accumulated from above by points $x_a<x_b$, and write 
$$\frac{y_b-y_a}{x_b-x_a}=\frac{\sum_{a<w<b}\Psi_{(\eta_0,\eta_1)}(w)|J|}{\sum_{a<w<b}\Psi_{(\theta_0,\theta_1)}(w)|I|}=\frac{\sum_{a<w<b}\Gamma(w)\cdot\Psi_{(\theta_0,\theta_1)}(w)}{\sum_{a<w<b}\Psi_{(\theta_0,\theta_1)}(w)}\cdot\frac{|J|}{|I|},$$ 
where $\Gamma(w)=\Psi_{(\eta_0,\eta_1)}(w)/\Psi_{(\theta_0,\theta_1)}(w)$. Observe that, since by hypothesis $\lim_{n\to+\infty}\Gamma(a_1\cdots a_n)=\chi(a)>0$, then given $\varepsilon>0$ there exists $N>0$ such that $\chi(a)-\varepsilon<\Gamma(a_1\cdots a_n)<\chi(a)+\varepsilon$, $\forall\ n\geq N$. On the other hand, there exists $\delta>0$ such that if $|x_b-x_a|<\delta$ then $a_i=b_i$, $\forall\ i=1,\dots,N$. This implies that all the gaps $I_w$ satisfying $x_a<I_w<x_b$ have indexation satisfying $w_i=a_i$, $\forall\ i=1,\dots,N$. It follows that
$$(\chi(a)-\varepsilon)<\frac{\sum_{a<w<b}\Gamma(w)\cdot\Psi_{(\theta_0,\theta_1)}(w)}{\sum_{a<w<b}\Psi_{(\theta_0,\theta_1)}(w)}<(\chi(a)+\varepsilon),$$
whenever $|x_b-x_a|<\delta$. This implies that for every $x_a\in X$ the limit in \eqref{eq_incremental_quotients} above exists an is equal to $\chi(a)\cdot|J|/|I|$, from where we see that $h'$ exists and is continuous over the Cantor set $X$.
\end{proof}

%%%%%%%%%%%%%%%%%%%%%%%%%%%%%%%%%%%%%%%%%%%%%%%%%%%%%%%%%%%%%%%%%%%%%%%%%%%%%%%%%%%%%%%%%%%%%%%%%%%%%%%%%%%%%%%%%%%%%%%%%%%%%%%%%%%%%%%%%%%%%%%%%%%%
\section{On the proof of Theorem \ref{Theorem A}} \label{sec_proof_Thm_A}
In this section we  prove Theorem \ref{Theorem A}: For each degree of regularity $1\leq s\leq \infty$ and $0<\lambda<1/2$ we will construct a pseudo-affine hyperbolic IFS with slope $\lambda$ that is of class $C^s$ and not $C^t$, $\forall\ t>s$. Moreover, we will construct both cases (a)- $C^s$-conjugated to self-similar, and (b)- non $C^s$-conjugated to self-similar. 

%--------------------------------------------------------------------------------------------------------------------------------------
\subsection{Case (a): $C^s$-conjugated to self-similar }
Using Proposition \ref{prop_classification_3} we can construct the IFS using a pair of functions $\{\theta_i:\mathcal{W}\to\R\}_{i=0,1}$ that must satisfy the criteria therein in order to have a specified degree of regularity. For obtaining IFS conjugated to self-similar, we will use functions whose values on words $w\in\mathcal{W}$ depend only on the size $n=|w|$ of the word. That is, we will define $\theta_i(w)=\varepsilon_{|w|}$ for some real sequence $(\varepsilon_n)_{n\geq 0}$. Recall from Lemma \ref{lemma_Cantor_proportions} that the lengths of the gaps $\{I_w\}_{w\in\mathcal{W}}$ associated to the Cantor set $X_\Phi$ must be given by the expression $|I_w|=\Psi(w)\cdot|I|$, $\forall\ w\in\mathcal{W}$, where $0<|I|<1$ is the length of the gap corresponding to the empty word. In this particular case the function $\Psi:\mathcal{W}\to(0,1)$ will take the form $\Psi(w)=\Psi(n)=\prod_{i=0}^{n-1}(\lambda+\varepsilon_i)$ for every word of length $|w|=n\geq 1$. 
%Observe that, in this case, by lemma \ref{lemma_symmetric_proportions} we can see that the scaling functions of the associated Cantor set $X_F$ are constant.
Let's see first how to determine the sequences $(\varepsilon_n)_{n\geq 0}$ for each degree of regularity, and then the conjugation with the self-similar model.

\paragraph{Case $s=1$.}
Choose $(\varepsilon_n)_{n\geq 0}$ to be any sequence satisfying that $0<\lambda+\varepsilon_n<1$ for every $n\geq 0$, and $\varepsilon_n\to 0$ sub-exponentially. That is, $\sup\{\varepsilon_n\cdot\mu^{-n}:{n\geq 0}\}=\infty$ for all $0<\mu<1$. For instance, take the sequence $\varepsilon_0=0$ and $\varepsilon_n=\lambda n^{-\gamma}$ for $n\geq 1$, where $\gamma>0$ is some constant. Then, by Proposition \ref{prop_classification_3} there exists a pseudo-affine IFS of class $C^1$ with proportions $\lambda_i(w)=\lambda+\varepsilon_{|w|}$, $\forall\ w\in\mathcal{W}$. Using Remark \ref{rmk_exponential_convergence} above we see that this IFS cannot be of class $C^{s}$ for any $s>1$, since the functions $|\theta_i(w)|=|\varepsilon_n|$ do not converge exponentially to zero with $|w|=n\to\infty$.

\paragraph{Case $1<s<\infty$.}
Write $s=r+\alpha$ with $r\in\mathbb{N}$ and $0<\alpha\leq 1$.  It is enough to show the following:
\begin{cl}\label{cl_sequence_epsilon_n_case_S>1}
\emph{
There exists a sequence $(\varepsilon_n)_{n\geq 0}$ such that
\begin{enumerate}[a.]
\item $\varepsilon_n\to 0$ with $n\to+\infty$ and $0<\lambda+\varepsilon_{n}<1$, $\forall\ n\geq 0$;
\item $|\epsilon_{n}|=\Psi(n)^{s-1}$, $\forall\ n\geq 1$;
\end{enumerate}
}
\end{cl}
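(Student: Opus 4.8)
The claim asks for a sequence $(\varepsilon_n)$ satisfying two constraints simultaneously: a soft one (convergence to zero, staying in the admissible window $0<\lambda+\varepsilon_n<1$) and a rigid self-referential one, $|\varepsilon_n| = \Psi(n)^{s-1}$, where $\Psi(n) = \prod_{i=0}^{n-1}(\lambda+\varepsilon_i)$ itself depends on $\varepsilon_0,\dots,\varepsilon_{n-1}$. So the natural approach is a \emph{recursive construction}: define $\varepsilon_0$ first (e.g. $\varepsilon_0=0$, so $\Psi(1)=\lambda$), then having chosen $\varepsilon_0,\dots,\varepsilon_{n-1}$ the quantity $\Psi(n)$ is already determined, and we simply \emph{set} $\varepsilon_n := \Psi(n)^{s-1}$ (taking the positive sign, say, so condition b.\ holds with equality and without absolute-value ambiguity). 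The only thing to check is that this recursion is well-posed and that the soft conditions in a.\ hold.

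The verification splits into two points. First, admissibility: since $s>1$ we have $s-1>0$, and as long as each $\lambda+\varepsilon_i$ lies in $(0,1)$ the product $\Psi(n)\in(0,1)$, hence $\varepsilon_n = \Psi(n)^{s-1}\in(0,1)$, so $\lambda < \lambda+\varepsilon_n < \lambda+1$; the upper bound $\lambda+\varepsilon_n<1$ needs $\varepsilon_n<1-\lambda$, which holds because $\varepsilon_n<1$ and... actually more carefully: $\varepsilon_n = \Psi(n)^{s-1}$ and $\Psi(n)\le (\lambda+\varepsilon_0)\cdots$; if I start with $\varepsilon_0=0$ then $\Psi(1)=\lambda<1/2<1-\lambda$, and inductively $\Psi(n)\le \Psi(1) = \lambda$ provided each subsequent factor $\lambda+\varepsilon_i \le 1$ — but I want the factors to be small, not just $\le 1$. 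In fact once $\varepsilon_n$ is small the factors $\lambda+\varepsilon_i$ are close to $\lambda<1/2$, so $\Psi(n)$ decays at least geometrically like $(\lambda+o(1))^n$, forcing $\Psi(n)\to 0$ and hence $\varepsilon_n = \Psi(n)^{s-1}\to 0$. To make this rigorous I would argue by induction that $\varepsilon_n \le C\mu^n$ for suitable $C$ and $\lambda<\mu<1/2$: once $n$ is large enough that $\varepsilon_i<\mu-\lambda$ for all $i\ge n_0$, every factor after $n_0$ is $<\mu$, so $\Psi(n)\le \Psi(n_0)\mu^{n-n_0}$, giving the geometric bound; the base case and the finitely many small $n$ are handled by hand, using $\varepsilon_0=0$ and the monotonicity-type estimate that the $\varepsilon_i$ stay bounded by, say, $\lambda$ throughout (which one checks is consistent with the recursion since $\lambda^{s-1}\le \lambda$ when $s\ge 2$, and a slightly weaker bound works for $1<s<2$).

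Once Claim~\ref{cl_sequence_epsilon_n_case_S>1} is established, the rest of Case $1<s<\infty$ follows by feeding the pair $\theta_i(w) := \varepsilon_{|w|}$ into Proposition~\ref{prop_classification_3}: condition (a) there is the uniform convergence $\varepsilon_n\to 0$, condition (b) is the window condition, and the regularity upgrade in item (1) of that Proposition is exactly $|\theta_i(w)| = \varepsilon_{|w|} = \Psi(|w|)^{s-1}$, which equals $(|I_w|/|I|)^{s-1}$, so $|\theta_i(w)| \le C|I_w|^{r-1+\alpha}$ with $C=|I|^{-(s-1)}$; this yields a $C^{r,\alpha}=C^s$ IFS. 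The non-$C^t$ direction for $t>s$ would come from a converse estimate: by Remark~\ref{rmk_exponential_convergence} / Proposition~\ref{prop_calssification_2}, higher regularity would force $|\theta_i(w)| \lesssim |I_w|^{t-1}$, i.e.\ $\Psi(n)^{s-1}\lesssim \Psi(n)^{t-1}$ for all $n$, which is false since $\Psi(n)\to 0$ and $t-1>s-1$. The conjugacy-to-self-similar statement (part a.\ of Theorem~\ref{Theorem A}) follows because $\theta_i$ depends only on word length, so the ratio $\Psi_{(\theta_0,\theta_1)}(a_1\cdots a_n)/\Psi_{(0,0)}(a_1\cdots a_n) = \prod_{i=0}^{n-1}(1+\varepsilon_i/\lambda)$ is independent of $a$ and converges (the log-sum $\sum \log(1+\varepsilon_i/\lambda)$ converges absolutely because $\varepsilon_i$ decays geometrically), so the matching condition \eqref{equation_condition-diff} holds against the self-similar model $(\theta_0,\theta_1)=(0,0)$.

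\textbf{Main obstacle.} The only real subtlety is that the recursion defining $\varepsilon_n$ is genuinely implicit in a mild way — $\varepsilon_n$ depends on the whole history $\varepsilon_0,\dots,\varepsilon_{n-1}$ through $\Psi(n)$ — so one must be a little careful to set up a clean induction hypothesis (e.g.\ "$0\le\varepsilon_k\le\lambda$ and $\Psi(k+1)\le\lambda^{\,k+1}\cdot(\text{something})$") strong enough to propagate and to deliver the geometric decay needed both for $\varepsilon_n\to0$ and for the absolute convergence of $\sum\log(1+\varepsilon_i/\lambda)$ used in the conjugacy step. Everything else is a routine unwinding of the definitions and an appeal to Proposition~\ref{prop_classification_3}.
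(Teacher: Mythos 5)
Your construction is exactly the paper's: with $\varepsilon_0=0$, setting $\varepsilon_n:=\Psi(n)^{s-1}$ forces the one-step recursion $\varepsilon_{n+1}=\Psi(n)^{s-1}(\lambda+\varepsilon_n)^{s-1}=\varepsilon_n(\lambda+\varepsilon_n)^{s-1}$, which is precisely the paper's iteration of $T_{(\lambda,s-1)}(u)=u(\lambda+u)^{s-1}$, and the decay is obtained there by the slightly cleaner observation that $T_{(\lambda,s-1)}(u)<u$ on $(0,1/2)$ (monotone decrease to the unique fixed point $0$) rather than your geometric bound. The one point you hedge on --- admissibility of the first step for $1<s<2$, where $\varepsilon_1=\lambda^{s-1}$ can exceed $1-\lambda$ when $\lambda$ is near $1/2$ --- is glossed over in the paper as well (it asserts $0<\varepsilon_1<1/2$, which fails for $s$ near $1$); it is repaired by choosing $\varepsilon_0\in(-\lambda,0)$ close enough to $-\lambda$ that $(\lambda+\varepsilon_0)^{s-1}<1-\lambda$, after which the decreasing property propagates and both a.\ and b.\ hold.
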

By Proposition \ref{prop_classification_3}, item a. above implies the existence of a pseudo-affine IFS with dynamical proportions $\lambda_i(w)=\lambda+\theta_i(w)$, where $\theta_i(w)=\varepsilon_{|w|}$, $\forall\ w\in\mathcal{W}$ and $i=0,1$. Item b. above implies that $|\theta_i(w)|=D\cdot|I_w|^{s-1}$ for the constant $D=(1/|I|)^{s-1}>0$. Hence, the IFS is of class $C^{r,\alpha}$. Finally, using Proposition \ref{prop_calssification_2} we can check that the IFS is not $C^t$ for $t>s$, since 
$$\sup_{w\in\mathcal{W}}\left\{\frac{|\theta_i(w)|}{|I_w|^{t-1}}\right\}=\sup_{w\in\mathcal{W}}\left\{\frac{D}{|I_w|^{t-s}}\right\}=+\infty,\ \text{whenever}\ t>s.$$

\begin{proof}[Proof of Claim \ref{cl_sequence_epsilon_n_case_S>1}]
Let $\varepsilon_0=0$, $\varepsilon_1=\lambda^{s-1}$, and for $n\geq 1$ define $\varepsilon_{n+1}=T_{(\lambda,s-1)}(\varepsilon_n)$, where the map $T_{(\lambda,s-1)}:\R\to\R$ is given by $T_{(\lambda,s-1)}(u)=u\cdot(\lambda+u)^{s-1}$. 
To check item b. proceed by induction. For $n=1$ then $\varepsilon_1=(\lambda+\varepsilon_0)^{s-1}=\Psi(1)^{s-1}$ is verified, and assuming $\varepsilon_n=\Psi(n)^{s-1}$ for some $n\geq 1$ then 
$$\varepsilon_{n+1}=T_{(\lambda,s-1)}(\varepsilon_{n})=\varepsilon_{n}(\lambda+\varepsilon_n)^{s-1}=\Psi(n)^{s-1}(\lambda+\varepsilon_n)^{s-1}
=\left(\prod_{i=0}^{n-1}(\lambda+\varepsilon_i)^{s-1}\right)\cdot(\lambda+\varepsilon_n)^{s-1}=\Psi(n)^{s-1}.$$
To check item a. observe that for every $0<\lambda<1/2$ and $s>1$ we have the strict inequality $T_{(\lambda,s-1)}(u)<u$ in the open interval $0<u<1/2$. Thus, every sequence $x_{n}=T_{(\lambda,s-1)}(x_0)$ satisfies that $0<x_n<1/2$ and $x_n\to 0$, provided $0<x_0<1/2$. Applying this to the sequence $(\varepsilon_n)_{n\geq 0}$ we have that $0<\lambda+\varepsilon_n<1$ for $n=0,1$, and since $0<\varepsilon_1<1/2$ then $\varepsilon_n\to 0$ and $0<\lambda+\varepsilon_n<1$ for every $n\geq 2$. This proves the claim. 
\end{proof}

\paragraph{Case $s=\infty$.}
This case follows the same argument as before.
\begin{cl}\label{cl_sequence_epsilon_n_case_S=infty}
\emph{
There exists a sequence $(\varepsilon_n)_{n\geq 0}$ such that
\begin{enumerate}[a.]
\item $\varepsilon_n\to 0$ with $n\to+\infty$ and $0<\lambda+\varepsilon_{n}<1$, $\forall\ n\geq 0$;
\item $|\epsilon_{n}|=\Psi(n)^{n}$, $\forall\ n\geq 1$;
\end{enumerate}
}
\end{cl}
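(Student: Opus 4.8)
The plan is to build $(\varepsilon_n)_{n\ge 0}$ by the same recursive device used for Claim~\ref{cl_sequence_epsilon_n_case_S>1}, the only change being that the exponent now grows with the index. Set $\varepsilon_0=0$; having defined $\varepsilon_0,\dots,\varepsilon_{n-1}$, put $\Psi(n)=\prod_{i=0}^{n-1}(\lambda+\varepsilon_i)$ and then $\varepsilon_n:=\Psi(n)^{\,n}$. Since $\Psi(n+1)=\Psi(n)\cdot(\lambda+\varepsilon_n)$ this is a legitimate recursion, and item~b. holds by construction, as $|\varepsilon_n|=\varepsilon_n=\Psi(n)^{\,n}$ for every $n\ge 1$. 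So everything reduces to item~a.

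For item~a. I would run one (strong) induction on $n$ establishing the two coupled estimates
\[
0\le\varepsilon_n\le\lambda
\qquad\text{and}\qquad
\Psi(n)\le\lambda\,(2\lambda)^{\,n-1}\quad(n\ge 1).
\]
The base case is $\varepsilon_0=0$, $\Psi(1)=\lambda$ and $\varepsilon_1=\Psi(1)^{1}=\lambda$. For the inductive step, if $\varepsilon_i\le\lambda$ for all $i<n$ then $\lambda+\varepsilon_i\le 2\lambda<1$ for such $i$ (with $\lambda+\varepsilon_0=\lambda$), hence $\Psi(n)\le\lambda(2\lambda)^{n-1}$; and then $\varepsilon_n=\Psi(n)^{\,n}\le\lambda^{\,n}(2\lambda)^{\,n(n-1)}\le\lambda$, using $0<2\lambda<1$ and $0<\lambda<1$. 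This gives $0<\lambda\le\lambda+\varepsilon_n\le 2\lambda<1$ for every $n$, which is the inequality demanded in a.; and since $\Psi(n)\le\lambda(2\lambda)^{n-1}\to 0$ and $\varepsilon_n=\Psi(n)^{\,n}\le\Psi(n)$ for $n\ge 1$, we also obtain $\varepsilon_n\to 0$ (in fact super-exponentially fast). This proves a., and with it the claim.

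I do not expect any genuine obstacle here: the one point needing care is that the bound $\varepsilon_n\le\lambda$ and the bound on $\Psi(n)$ must be carried through the induction simultaneously, since each is used to control the other, and once this coupling is arranged the hypothesis $\lambda<\tfrac12$ does the rest. For orientation toward the use of this sequence: taking $\theta_i(w):=\varepsilon_{|w|}$ one has $|\theta_i(w)|=\Psi(|w|)^{|w|}=(|I_w|/|I|)^{|w|}$, so for any fixed $1\le r<\infty$ and all $|w|\ge r$ one has $|\theta_i(w)|\le|I_w|^{r}/|I|^{r}$, and enlarging the constant over the finitely many shorter lengths yields $|\theta_i(w)|\le C(r)|I_w|^{r}$ for all $w$; Proposition~\ref{prop_classification_3}(2) then produces a $C^\infty$ pseudo-affine IFS, which moreover is not real analytic since $\theta_i\not\equiv 0$ (for instance $\varepsilon_1=\lambda\ne 0$), by the classification discussed in Section~\ref{sec_conjugacy_classes}.
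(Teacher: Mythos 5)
Your proof is correct and follows essentially the same route as the paper: the paper defines the identical sequence via the one-step recursion $\varepsilon_{n+1}=\varepsilon_n^{(n+1)/n}(\lambda+\varepsilon_n)^{n+1}$ (so that $\varepsilon_n=\Psi(n)^n$ holds by a short computation rather than by definition), and verifies item a.\ with the elementary bound $T_{(\lambda,n)}(u)<u(\tfrac12+u)$ on $(0,\tfrac12)$ in place of your coupled induction on $\varepsilon_n$ and $\Psi(n)$. Both verifications are routine and yield the same conclusion, so no substantive difference.
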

Taking $\theta_i(w)=\varepsilon_{|w|}$ the statement in Proposition \ref{prop_classification_3} gives again pseudo-affine IFS, that in this case is of class $C^\infty$ since  
$$\sup_{w\in\mathcal{W}}\left\{\frac{|\theta_i(w)|}{|I_w|^{k}}\right\}=\sup_{w\in\mathcal{W}}\left\{\frac{1}{|I|^k}\cdot\frac{\Psi(|w|)^{|w|}}{\Psi(|w|)^{k}}\right\}=0,\ \text{for every}\ k\geq 1.$$
The IFS is not analytic, because the functions $\theta_i(w)$ are not constant $=0$. 

\begin{proof}[Proof of Claim \ref{cl_sequence_epsilon_n_case_S=infty}]
Let $\varepsilon_0=0$, $\varepsilon_1=\lambda$, and for $n\geq 1$ define $\varepsilon_{n+1}=T_{(\lambda,n)}(\varepsilon_n)$, where in this case the map $T_{(\lambda,n)}:\R\to\R$ is given by $T_{(\lambda,n)}(u)=u^{\frac{n+1}{n}}\cdot(\lambda+u)^{n+1}$. To check item b. we have
$$\varepsilon_{n+1}=T_{(\lambda,n)}(\varepsilon_{n})=\varepsilon_{n}^{\frac{n+1}{n}}(\lambda+\varepsilon_n)^{n+1}=\Psi(n)^{n+1}(\lambda+\varepsilon_n)^{n+1}
=\left(\prod_{i=0}^{n-1}(\lambda+\varepsilon_i)^{n+1}\right)\cdot(\lambda+\varepsilon_n)^{n+1}=\Psi(n)^{n+1}.$$
To check item a. observe that for every $0<\lambda<1/2$ and $n\geq 1$, on the open interval $0\leq u\leq 1/2$ we have the inequality $T_{(\lambda,n)}(u)<u(1/2+u)$, and hence $0<\varepsilon_{n+1}=T_{(\lambda,n)}(\varepsilon_n)<1/2$, $\forall n\geq 1$ and $\varepsilon_n\to 0$ when $n\to+\infty$, since $0<\varepsilon_1=\lambda<1/2$.
\end{proof}

To check conjugation with the self-similar IFS we will use Theorem \ref{Theorem 27}. Denote by $\Phi$ the IFS with proportions $\lambda_i^\Phi(w)=\lambda+\varepsilon_{|w|}$, for the sequences $(\varepsilon_n)_{n\geq 0}$ constructed above, and denote by $\widetilde{\Phi}$ the self-similar IFS of slope $\lambda$, that has constant proportions $\lambda_i^{\widetilde{\Phi}}(w)=\lambda$. Let $a=(a_n)_{n\geq 0}$ be a sequence in $\{0,1\}^\N$. Then, for every $n\geq 0$ the expression \eqref{equation_condition-diff} from Theorem \ref{thm_B_conjugation} takes the form:
\begin{equation*}
\chi(a_1\cdots a_n)=\frac{\Psi_{(\varepsilon,\varepsilon)}(a_1\cdots a_n)}{\Psi_{(0,0)}(a_1\cdots a_n)}=\prod_{i=0}^{n-1}\left(1+\frac{\varepsilon_i}{\lambda}\right).
\end{equation*}
Since the right hand side in the equality above only depends on $n=|w|$ and not in the particular sequence $a$, we see that the limit 
$\lim_{n\to\infty}\chi(a_1\cdots a_n)$ exists if $\sum_{i=0}^\infty|\varepsilon_i|<\infty$, and it is constant in case of existence. Therefore, in the case of regularity $1<s\leq\infty$, since $\varepsilon_n\to 0$ exponentially, we obtain that $\chi:\{0,1\}^\N\to\R_+$ is a constant function, and thus $\Phi$ is $C^s$-conjugated to $\widetilde{\Phi}$. In the case of regularity $s=1$, the pseudo-affine IFS $\Phi$ is $C^1$-conjugated to $\widetilde{\Phi}$ only when $\varepsilon_n$ is summable, e.g. when $\varepsilon_n\simeq \lambda n^{-\gamma}$, for some constant $\gamma>1$. 

\begin{rem}
The hyperbolic Cantor sets $X_\Phi$ obtained above have constant scaling function $\rho=(\lambda,1-2\lambda,\lambda)$. This follows from the fact that they are $C^s$-conjugated to self-similar, but it may also be checked directly from the definition of scaling function. Indeed, since all the gaps at stage $n\geq 1$ in the construction of the binary Cantor set have the same size, then all the cylinders of the Cantor set at level $n$ have the same size. It follows that the geometric ratios $r_i(a_n\cdots a_1)$ (cf. Section \ref{sec_preliminaries}) only depend on $n$, and thus taking limit when $n\to+\infty$ we obtain constant scaling functions.    
\end{rem}

%--------------------------------------------------------------------------------------------------------------------------------------
\subsection{Case (b): non-$C^s$-conjugated to self-similar}

\begin{figure}[t]
\begin{center}
\includegraphics[width=\textwidth]{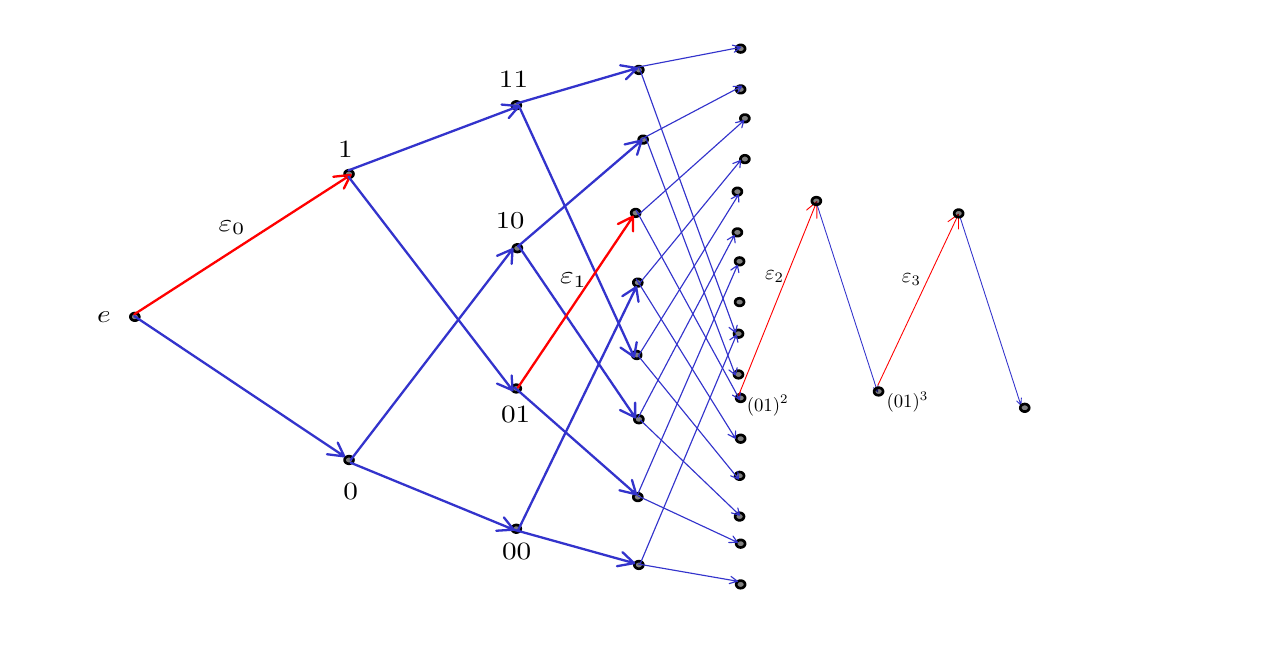}
\caption{Edges in blue correspond to $\theta_i(w)=0$, otherwise colored in red.}
\label{fig_non-conjugated_example}
\end{center}
\end{figure}

\paragraph{Case $s=1$.}
Let's start by observing that, in the case $s=1$, the examples from the previous subsection are not conjugated to self-similar, provided the sequence $(\varepsilon_n)_{n\geq 0}$ is not summable. For example, if we choose $\varepsilon_n\simeq \lambda/n$ in the setting above, we obtain a pseudo-affine hyperbolic IFS of class $C^1$, that is not $C^1$-conjugated to self-similar.

For the case $1<s\leq\infty$ we will be forced to consider a pair of functions $\{\theta_i:\mathcal{W}\to\R\}_{i=0,1}$ that depend non trivially on the branches $i=0,1$ of the IFS. For this, we will choose suitable sequences $(\varepsilon_k)_{k\geq 0}$ of positive reals and define the functions $\theta_i(w)$ in the following way: For the empty word $e\in\mathcal{W}$ we set $\theta_0(e)=0$, $\theta_1(e)=\varepsilon_0$, and for every finite word $w=w_1\cdots w_n$, $n\geq 1$ we set
\begin{align}
\label{eq_theta_i-non-symmetric}
& \theta_i(w_1\cdots w_n) = \left\lbrace
\begin{matrix}
&\varepsilon_k & ;\ \text{if}\ \ n=2k,\ w=(01)^k,\ \text{and}\ i=1;\\
& 0       & ;\ \text{otherwise}.
\end{matrix}
\right.
\end{align}
The corresponding proportions $\lambda_i(w)=\lambda+\theta_i(w)$ are represented in the graph at Figure \eqref{fig_non-conjugated_example}. We explain first how to choose the sequences $(\varepsilon_k)_{k\geq 0}$ for each degree of regularity, and we analyze the conjugation problem at the end.

\paragraph{Case $1<s<\infty$.}
The construction relies on the following claim:
\begin{cl}\label{cl_non-conjugated_case_s>1}
\emph{
There exists a sequence $(\varepsilon_k)_{k\geq 0}$ of positive reals such that
\begin{enumerate}[a.]
\item $\varepsilon_k\to 0$ with $k\to+\infty$ and $0<\lambda+\varepsilon_{k}<1$, $\forall\ k\geq 0$;
\item $\varepsilon_{k}=\left(\lambda^k\cdot\prod_{r=0}^{k-1}(\lambda+\varepsilon_{r})\right)^{s-1}$, $\forall\ k\geq 1$;
\end{enumerate}
}
\end{cl}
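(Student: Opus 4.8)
The plan is to construct the sequence $(\varepsilon_k)_{k\geq 0}$ recursively, in direct analogy with the proof of Claim \ref{cl_sequence_epsilon_n_case_S>1}, but now tracking a product that skips levels because the functions $\theta_i$ are supported only on the words $(01)^k$. Introduce the auxiliary quantity $P_k = \lambda^k \cdot \prod_{r=0}^{k-1}(\lambda+\varepsilon_r)$ for $k\geq 1$ (with $P_0 = 1$, say), which is (up to a normalizing factor) the length $|I_{(01)^k}|$ of the relevant gap: since $\theta_i$ vanishes except at the special words, the cocycle $\Psi$ evaluated on the word $(01)^k$ of length $2k$ is exactly $\prod_{r=0}^{k-1} \lambda (\lambda+\varepsilon_r) = P_k$. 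So item b. is asking precisely that $\varepsilon_k = P_k^{\,s-1}$, i.e. that $\theta_1((01)^k) = |I_{(01)^k}|^{s-1}$ up to the constant $(1/|I|)^{s-1}$, which is what the $C^{r,\alpha}$-criterion of Proposition \ref{prop_classification_3} will later demand.

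First I would set $\varepsilon_0 = 0$ and $\varepsilon_1 = \lambda^{s-1}$ (consistent with $P_1 = \lambda(\lambda+\varepsilon_0) = \lambda^2$... — here one must be slightly careful about the exact normalization of $P_k$; I will fix $P_k$ so that $\varepsilon_1 = P_1^{s-1}$ holds, e.g. absorbing the leading $\lambda$ appropriately). Then define $\varepsilon_{k+1}$ from $\varepsilon_k$ by the one-step map obtained from the relation $P_{k+1} = P_k\cdot\lambda\cdot(\lambda+\varepsilon_k)$: namely $\varepsilon_{k+1} = P_{k+1}^{s-1} = \big(P_k \lambda(\lambda+\varepsilon_k)\big)^{s-1} = \varepsilon_k \cdot \big(\lambda(\lambda+\varepsilon_k)\big)^{s-1}$, which exhibits $\varepsilon_{k+1}$ as $T(\varepsilon_k)$ for the explicit map $T(u) = u\cdot\big(\lambda(\lambda+u)\big)^{s-1}$, a direct cousin of the map $T_{(\lambda,s-1)}$ used before. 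Item b. then follows by induction exactly as in Claim \ref{cl_sequence_epsilon_n_case_S>1}: the inductive hypothesis $\varepsilon_k = P_k^{s-1}$ propagates through the recursion by the displayed identity.

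For item a., the point is that $T$ is strictly contracting toward $0$ on the relevant interval. Since $0<\lambda<1/2$ and $s>1$, for $0<u<1/2$ we have $\lambda(\lambda+u) < \tfrac12\cdot 1 = \tfrac12 < 1$, hence $\big(\lambda(\lambda+u)\big)^{s-1} < 1$ and therefore $T(u) < u$; also $T(u)>0$. Thus starting from $0<\varepsilon_1 = \lambda^{s-1} < 1/2$ (using $\lambda<1/2$), the sequence $(\varepsilon_k)_{k\geq 1}$ stays in $(0,1/2)$, is eventually monotone decreasing, and converges. To see the limit is $0$: the limit $\ell$ must be a fixed point of $T$ in $[0,1/2)$, and $T(u)=u$ forces $u=0$ since $\big(\lambda(\lambda+u)\big)^{s-1}<1$ there; alternatively $\varepsilon_k \le \big(\lambda(\lambda+\varepsilon_1)\big)^{(s-1)(k-1)}\varepsilon_1 \to 0$ geometrically. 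Finally $0<\lambda+\varepsilon_k<1$ holds since $0\le \varepsilon_k < 1/2$ and $\lambda<1/2$ give $\lambda+\varepsilon_k < 1$, and $\lambda+\varepsilon_k \ge \lambda > 0$.

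I do not expect any genuine obstacle here — the argument is a routine adaptation of Claim \ref{cl_sequence_epsilon_n_case_S>1}. The only point requiring care is the bookkeeping of the normalizing constant in the definition of $P_k$ (whether the leading factor $\lambda^k$ matches the product of $\lambda$'s coming from the even letters of $(01)^k$ in the cocycle $\Psi$), so that item b. as literally stated — with $\varepsilon_k = \big(\lambda^k\prod_{r=0}^{k-1}(\lambda+\varepsilon_r)\big)^{s-1}$ — is what the recursion produces; once the seed $\varepsilon_1$ is chosen to be consistent with $k=1$, the induction closes automatically. A secondary, equally mild point is verifying that $\varepsilon_1 = \lambda^{s-1}$ indeed lies in $(0,1/2)$, which is immediate from $\lambda<1/2$ and $s-1\ge 0$ (in fact $\lambda^{s-1}<1$, and one can shrink the choice of seed or note $\lambda^{s-1}<1/2$ whenever needed, or simply observe the sequence enters $(0,1/2)$ after finitely many steps and argue from there).
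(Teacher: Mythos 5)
Your proposal is, in substance, the paper's own proof: the one-step map you derive, $T(u)=u\bigl(\lambda(\lambda+u)\bigr)^{s-1}$, is exactly the paper's $T_{(\lambda,s-1)}(u)=\lambda^{s-1}u(\lambda+u)^{s-1}$, the induction for item b.\ is identical, and item a.\ is obtained in both cases from the strict inequality $T(u)<u$ on $(0,1/2)$ together with monotonicity. Two bookkeeping remarks. First, choose $\varepsilon_0>0$ rather than $\varepsilon_0=0$: the claim asks for positive reals, and positivity of $\varepsilon_0=\theta_1(e)$ is what later yields the lower bound $\chi\bigl((01)^k\bigr)\geq 1+\varepsilon_0/\lambda>1$ in the oscillation argument ruling out conjugacy to self-similar. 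Second, the normalization issue you flag is real: the seed consistent with item b.\ at $k=1$ is $\varepsilon_1=\bigl(\lambda(\lambda+\varepsilon_0)\bigr)^{s-1}$ rather than $\lambda^{s-1}$ (the paper's own seed $\varepsilon_1=\lambda(\lambda+\varepsilon_0)$ likewise omits the exponent $s-1$); with this seed the induction closes exactly as you describe. The one genuinely delicate point, which neither your argument nor the paper's addresses, is that for $s-1$ small and $\lambda$ close to $1/2$ the corrected seed satisfies $\varepsilon_1\geq\lambda^{2(s-1)}$, which may exceed $1-\lambda$ and hence violate item a.\ at $k=1$; this is harmless for the intended application, since Proposition \ref{prop_classification_3} only needs $\varepsilon_k$ comparable to $\Psi\bigl((01)^k\bigr)^{s-1}$ up to a multiplicative constant, so one may take $\varepsilon_k=c\cdot\bigl(\lambda^k\prod_{r=0}^{k-1}(\lambda+\varepsilon_r)\bigr)^{s-1}$ with $c>0$ small, but the claim with exact equality should be read with that flexibility in mind.
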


Assuming this claim, we have to check that the $\theta_i(w)$ given in expression \eqref{eq_theta_i-non-symmetric} satisfy the hypothesis of Proposition \ref{prop_classification_3}. Since $\theta_i(w)$ is either zero or equal to some $\varepsilon_k$, by item a. we have $0<\lambda+\theta_i(w)<1$, $\forall w\in\mathcal{W}$. Using item b. for every integer $k\geq 0$ we have 
\begin{align*}
\theta_1\left((01)^k\right)=\varepsilon_{k}
&=\left(\lambda^k\cdot\prod_{r=0}^{k-1}(\lambda+\varepsilon_{r})\right)^{s-1}
=\left(\prod_{r=0}^{k-1}(\lambda+0)(\lambda+\varepsilon_{k-r-1})\right)^{s-1}\\
&=\left(\prod_{r=0}^{k-1}\left(\lambda+\theta_{0}\left(1(01)^{k-r-1}\right)\right)\cdot\left(\lambda+\theta_{1}\left((01)^{k-r-1}\right)\right)\right)^{s-1}\\
&=\left(\prod_{l=1}^{2k}(\lambda+\theta_{w_l}(w_{l+1}\cdots w_{2k}))\right)^{s-1}=\left(\Psi_{(\theta_1,\theta_2)}\left((01)^k\right)\right)^{s-1}
\end{align*}
Since $\theta_i(w)=0$ for all other choices of $i$ and $w$, we see that $|\theta_i(w)|\leq \cdot(\Psi_{(\theta_1,\theta_2)}(w))^{s-1}$, $\forall$ $w\in\mathcal{W}$ and $i=0,1$. Hence, by Proposition \ref{prop_classification_3} we conclude the existence of a pseudo-affine IFS of class $C^s$ and dynamical proportions $\lambda_i(w)=\lambda+\theta_i(w)$. Finally, using Proposition \ref{prop_calssification_2} we can check that the IFS is not $C^t$ for $t>s$. Observe that for the word $w=(01)^k$, $k\geq 0$ it is satisfied that $|\theta_1((01)^k)|=D\cdot|I_{(01)^k}|^{s-1}$ for the constant $D=(1/|I|)^{s-1}>0$. Hence 
$$\sup_{w\in\mathcal{W}}\left\{\frac{|\theta_i(w)|}{|I_w|^{t-1}}\right\}\geq \sup_{k\geq 0}\left\{\frac{D}{|I_{(01)^k}|^{t-s}}\right\}=+\infty,\ \text{whenever}\ t>s.$$

\begin{proof}[Proof of Claim \ref{cl_non-conjugated_case_s>1}]
Choose any $0<\varepsilon_0<1/2$, so $0<\lambda+\varepsilon_0<1$. Define $\varepsilon_1= \left( \lambda\cdot(\lambda+\varepsilon_0) \right)^{s-1}$, which necessarily satisfies that $0<\lambda+\varepsilon_1<1$. For $k\geq 1$ define $\varepsilon_{k+1}=T_{(\lambda,s-1)}(\varepsilon_k)$, where $T_{(\lambda,s-1)}(u)=\lambda^{s-1}u(\lambda+u)^{s-1}$. To check item b. we have that
\begin{align*}
\varepsilon_{k+1}
=T_{(\lambda,s-1)}(\varepsilon_{k})=\lambda^{s-1}\varepsilon_{k}(\lambda+\varepsilon_k)^{s-1}
&=\lambda^{s-1}\left(\lambda^{k}\prod_{r=0}^{k-1}(\lambda+\varepsilon_r)\right)^{s-1}(\lambda+\varepsilon_k)^{s-1}
=\left(\lambda^{k+1}\prod_{r=0}^{k}(\lambda+\varepsilon_r)\right)^{s-1}.
\end{align*}
Finally, since $T_{(\lambda,s-1)}(u)<u$ in the open interval $0<u<1/2$ and $0<\varepsilon_1<1/2$, we conclude that $0<\lambda+\varepsilon_k<1$, $\forall k\geq 0$ and $\varepsilon_k\to 0$ when $k\to+\infty$. 
\end{proof}

\paragraph{Case $s=\infty$.}
The argument in this case is analogous to the previous one. 
\begin{cl}\label{cl_non-conjugated_case_s=infty}
\emph{
There exists a sequence $(\varepsilon_k)_{k\geq 0}$ of positive reals such that
\begin{enumerate}[a.]
\item $\varepsilon_k\to 0$ with $k\to+\infty$ and $0<\lambda+\varepsilon_{k}<1$, $\forall\ k\geq 0$;
\item $\varepsilon_{k}=\left(\lambda^k\cdot\prod_{r=0}^{k-1}(\lambda+\varepsilon_{r})\right)^{k}$, $\forall\ k\geq 1$;
\end{enumerate}
}
\end{cl}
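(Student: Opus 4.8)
The plan is to follow the recipe used in Claims \ref{cl_sequence_epsilon_n_case_S>1}, \ref{cl_sequence_epsilon_n_case_S=infty} and \ref{cl_non-conjugated_case_s>1}: define $(\varepsilon_k)_{k\geq 0}$ recursively through an auxiliary one‑variable map and then read off properties a.\ and b.\ from the dynamics of that map. Concretely, I would set $Q_k:=\lambda^k\prod_{r=0}^{k-1}(\lambda+\varepsilon_r)$, so that b.\ is exactly the requirement $\varepsilon_k=Q_k^{\,k}$ for all $k\geq 1$, together with the elementary one‑step identity $Q_{k+1}=\lambda(\lambda+\varepsilon_k)\,Q_k$. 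Writing $Q_k=\varepsilon_k^{1/k}$ (a positive real, once we know $\varepsilon_k>0$) and substituting gives
\begin{equation*}
\varepsilon_{k+1}=Q_{k+1}^{\,k+1}=\bigl(\lambda(\lambda+\varepsilon_k)\bigr)^{k+1}Q_k^{\,k+1}=\varepsilon_k^{\frac{k+1}{k}}\bigl(\lambda(\lambda+\varepsilon_k)\bigr)^{k+1}.
\end{equation*}
Accordingly I would choose any $0<\varepsilon_0<1/2$, put $\varepsilon_1:=\lambda(\lambda+\varepsilon_0)$, and define $\varepsilon_{k+1}:=T_{(\lambda,k)}(\varepsilon_k)$ for $k\geq 1$, where $T_{(\lambda,k)}(u):=u^{\frac{k+1}{k}}\bigl(\lambda(\lambda+u)\bigr)^{k+1}$.

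With this definition, condition b.\ is a routine induction: it holds at $k=1$ by the choice of $\varepsilon_1$, and if $\varepsilon_k=Q_k^{\,k}$ then $T_{(\lambda,k)}(\varepsilon_k)=\bigl(\lambda(\lambda+\varepsilon_k)\bigr)^{k+1}Q_k^{\,k+1}=\bigl(\lambda(\lambda+\varepsilon_k)Q_k\bigr)^{k+1}=Q_{k+1}^{\,k+1}$, which is b.\ at $k+1$. For condition a.\ I would first note that $0<\varepsilon_0<1/2$ forces $0<\lambda+\varepsilon_0<1$ and $0<\varepsilon_1=\lambda(\lambda+\varepsilon_0)<1/2$ (using $\lambda<1/2$), and then show that $T_{(\lambda,k)}$ contracts $(0,1/2)$ into itself: for $0<u<1/2$ one has $\lambda(\lambda+u)<1/2$, hence $\bigl(\lambda(\lambda+u)\bigr)^{k+1}<(1/2)^{k+1}\leq 1/2$, while $u^{\frac{k+1}{k}}<u$ because $u<1$ and $\frac{k+1}{k}>1$; therefore $0<T_{(\lambda,k)}(u)<\tfrac12 u<1/2$. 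By induction $0<\varepsilon_k<1/2$ for every $k$, so $0<\lambda+\varepsilon_k<1$, and since $\varepsilon_{k+1}<\tfrac12\varepsilon_k$ for $k\geq 1$ we obtain $\varepsilon_k\to 0$ (geometrically fast).

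I do not expect a genuine obstacle: the only point requiring a moment's care is that $T_{(\lambda,k)}$ contains the fractional power $u^{(k+1)/k}$, so the construction implicitly needs $\varepsilon_k>0$ for the root $\varepsilon_k^{1/k}=Q_k$ to be meaningful — but positivity is already guaranteed by the contraction estimate. Once the sequence is in hand, one concludes exactly as in the $1<s<\infty$ case: b.\ says $\varepsilon_k=\bigl(\Psi_{(\theta_0,\theta_1)}((01)^k)\bigr)^{k}$, so for each fixed degree $r\geq 1$ one has $|\theta_i(w)|\leq C(r)\bigl(\Psi_{(\theta_0,\theta_1)}(w)\bigr)^{r}$ for a suitable constant (the only bad terms being the finitely many words $(01)^k$ with $k<r$), and Proposition \ref{prop_classification_3} then produces a pseudo-affine hyperbolic IFS of class $C^\infty$; it is not analytic since $\theta_1((01)^k)=\varepsilon_k>0$ for all $k$, so $(\theta_0,\theta_1)$ is not the trivial pair, while $|\theta_1((01)^k)|=D\cdot|I_{(01)^k}|^{k}$ with $|I_{(01)^k}|\to 0$ shows, via Proposition \ref{prop_calssification_2}, that the IFS fails to be $C^t$ for any $t>s$.
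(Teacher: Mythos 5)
Your construction is correct and is essentially identical to the paper's: you use the same recursion $\varepsilon_{k+1}=\lambda^{k+1}\varepsilon_k^{(k+1)/k}(\lambda+\varepsilon_k)^{k+1}$ (merely grouping the factor as $\bigl(\lambda(\lambda+u)\bigr)^{k+1}$), the same induction for item b.\ via $Q_{k+1}=\lambda(\lambda+\varepsilon_k)Q_k$, and an equivalent invariance/contraction estimate on $(0,1/2)$ for item a. If anything, your explicit bound $T_{(\lambda,k)}(u)<\tfrac12 u$ gives the decay $\varepsilon_k\to 0$ a bit more directly than the paper's inequality $T_{(\lambda,k)}(u)<u(1/2+u)$.
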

By means of Proposition \ref{prop_classification_3}, taking $\theta_i(w)$ as defined in \eqref{eq_theta_i-non-symmetric} gives a pseudo-affine IFS, that in this case is of class $C^\infty$ since  
$$\sup_{w\in\mathcal{W}}\left\{\frac{|\theta_i(w)|}{|I_w|^{k}}\right\}=\sup_{m\geq 0}\left\{\frac{1}{|I|^k}\cdot\frac{\Psi((01)^m)^{m}}{\Psi((01)^m)^{k}}\right\}=0,\ \text{for every}\ k\geq 1.$$
The IFS is not analytic, because the functions $\theta_i(w)$ are not constant $=0$. 

\begin{proof}[Proof of Claim \ref{cl_non-conjugated_case_s=infty}]
Let $0<\varepsilon_0<1/2$ and $\varepsilon_1=\lambda(\lambda+\varepsilon_0)$, which satisfy $0<\lambda+\varepsilon_k<1$ for $k=0,1$. For every $k\geq 1$ define $\varepsilon_{k+1}=T_{(\lambda,k)}(\varepsilon_k)$, where $T_{(\lambda,k)}(u)=\lambda^{k+1}u^{\frac{k+1}{k}}\cdot(\lambda+u)^{k+1}$. To check item b. we have
\begin{align*}
\varepsilon_{k+1}
=T_{(\lambda,k)}(\varepsilon_{k})=\lambda^{k+1}\varepsilon_{k}^{\frac{k+1}{k}}(\lambda+\varepsilon_k)^{k+1}
&=\lambda^{k+1}\left(\lambda^{k}\prod_{r=0}^{k-1}(\lambda+\varepsilon_r)\right)^{k+1}(\lambda+\varepsilon_k)^{k+1}
=\left(\lambda^{k+1}\prod_{r=0}^{k}(\lambda+\varepsilon_r)\right)^{k+1}.
\end{align*}
To check item a. observe that for every $0<\lambda<1/2$ and $k\geq 1$, on the open interval $0\leq u\leq 1/2$ we have $T_{(\lambda,k)}(u)<u(1/2+u)$, and hence $0<\varepsilon_{k+1}=T_{(\lambda,n)}(\varepsilon_k)<1/2$, $\forall k\geq 1$ and $\varepsilon_k\to 0$ when $k\to+\infty$, since $0<\varepsilon_1<1/2$.
\end{proof}

Finally, using Theorem \ref{thm_B_conjugation}, we will show that the IFSs $\Phi$ constructed above are not $C^s$-conjugated to the self-similar IFS $\widetilde{\Phi}$, for every $1<s\leq\infty$. For this, we will show the non-existence of the limit $\lim_{n \to\infty}\chi(a_1\cdots a_n)$ given in the expression \eqref{equation_condition-diff} on the word $a=(01)^\infty$. For every $k\geq 0$ this expression can be written as
\begin{align*}
&\chi(a_1\cdots a_{2k})= \chi\left((01)^k\right)=
\prod_{r=0}^{k-1}\underbrace{\left(1+\frac{1}{\lambda}\theta_{0}\left(1(01)^{k-r-1}\right)\right)}_{1}\cdot\underbrace{\left(1+\frac{1}{\lambda}\theta_{1}\left((01)^{k-r-1}\right)\right)}_{1+\frac{1}{\lambda}\varepsilon_{k-r-1}}\geq (1+\frac{1}{\lambda}\varepsilon_0)>1;\\
&\chi(a_1\cdots a_{2k+1})= \chi\left((01)^k 0\right)=
\prod_{r=0}^{k-1}\underbrace{\left(1+\frac{1}{\lambda}\theta_{1}\left(1(01)^{k-r-1}\right)\right)}_{1}\cdot\underbrace{\left(1+\frac{1}{\lambda}\theta_{0}\left((01)^{k-r-1}\right)\right)}_{1}=1. 
\end{align*}
Thus, the sequence $\chi(a_1\cdots a_n)$ oscillates and has no limit, which implies that $\Phi$ cannot be $C^s$-conjugated to the self-similar IFS $\widetilde{\Phi}$. 

%%%%%%%%%%%%%%%%%%%%%%%%%%%%%%%%%%%%%%%%%%%%%%%%%%%%%%%%%%%%%%%%%%%%%%%%%%%%%%%%%%%%%%%%%%%%%%%%%%%%%%%%%%%%%%%%%%%%%%%%%%%%%%%%%%%%%%%%%%%%%%%%%%%%

\section{On the proof of Theorem \ref{Threom C}}

\subsection{On the proof of Theorem \ref{Threom C} Part (2)}
\label{sec_Livshitz-I}

Let $\widetilde{\Phi}= \lbrace g_0,g_1 \rbrace$ be a hyperbolic IFS  of class $C^s$, $s\geq 1$, and let $Y$ be its attractor. Recall that for each finite word $w\in\mathcal{W}$ we denote by $y_w$ the fixed point of the contraction $G_w:[0,1]\to[0,1]$. In this section we investigate whether $\widetilde{\Phi}$ is $C^s$-conjugated to some hyperbolic pseudo-affine IFS $\Phi= \lbrace f_0,f_1 \rbrace$. 

In this Section we prove Theorem \ref{Threom C} Part (2). Let us recall its statement:
\begin{thm}
\label{thm_B_conjugation}
Suppose $\widetilde{\Phi}$ is of class $C^{r,\alpha}$ for some $r\geq 1$ and $0<\alpha\leq 1$. If there exists $0<\lambda<1$ such that
\begin{equation}
\label{livshitz_condition}
G_w'(y_w)=\lambda^{|w|}, \forall w\in\mathcal{W},
\end{equation}
then $\widetilde{\Phi}$ is $C^{r,\alpha}$-conjugated to a hyperbolic pseudo-affine IFS $\Phi= \lbrace f_0,f_1 \rbrace$ of class $C^{r,\alpha}$, with slope $=\lambda.$
\end{thm}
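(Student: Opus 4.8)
\section*{Proof proposal}

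The plan is to realize the conjugacy $h$ as the primitive of $e^{\phi}$, where $\phi$ solves a Livsic-type cohomological equation, and then to upgrade the regularity of $\phi$. If $h\in\text{Diff}^{\,r,\alpha}([0,1])$ fixes $0$ and $1$ and we set $\phi:=\log h'$, then for $f_i:=h\circ g_i\circ h^{-1}$ the chain rule gives $f_i'(h(y))=e^{\phi(g_i(y))}\,g_i'(y)\,e^{-\phi(y)}$; hence $\Phi=\{f_0,f_1\}$ is pseudo-affine with slope $\lambda$ precisely when
\begin{equation}\label{eq_plan_cohom}
\phi(g_i(y))-\phi(y)=\log\lambda-\log g_i'(y),\qquad\forall\,y\in Y,\ i=0,1.
\end{equation}
So the problem reduces to solving \eqref{eq_plan_cohom} with a solution $\phi$ that is $C^{r-1,\alpha}$ on $Y$ in the sense of Milnor, so that $h'=e^{\phi}$ admits a positive $C^{r-1,\alpha}$ extension to $[0,1]$ and $h(t):=\big(\int_0^1 e^{\phi}\text{-extension}\big)^{-1}\int_0^t(e^{\phi}\text{-extension})$ is the desired $C^{r,\alpha}$ diffeomorphism.

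Equation \eqref{eq_plan_cohom} is exactly a Livsic cohomological equation for the expanding map $T\colon Y\to Y$ defined by $T|_{g_i(Y)}=g_i^{-1}$: writing $z=g_i(y)$ one has $\log g_i'(y)=-\log T'(z)$, so \eqref{eq_plan_cohom} becomes $\phi(z)-\phi(Tz)=\Delta(z)$ with $\Delta:=\log\lambda+\log T'$, and $\Delta$ is $C^{r-1,\alpha}$ on $Y$ because $T'>0$ is $C^{r-1,\alpha}$ (as $g_i\in C^{r,\alpha}$). The periodic obstructions of this equation vanish by hypothesis \eqref{livshitz_condition}: if $z=y_w$ is the fixed point of $G_w$ with $|w|=n$, then $T^{n}z=z$ and $(T^{n})'(z)=1/G_w'(y_w)=\lambda^{-n}$, so $\sum_{j=0}^{n-1}\Delta(T^{j}z)=n\log\lambda+\log(T^{n})'(z)=0$; note also that $0=y_0$ is a $T$-fixed point with $\Delta(0)=\log\lambda-\log g_0'(0)=0$. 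Hence $\phi$ is produced by the usual telescoping: for $y=y_a$ put
\begin{equation*}
\phi(y_a)=\phi(0)+\lim_{n\to\infty}\Big(n\log\lambda-\log G_{a_1\cdots a_n}'(0)\Big),
\end{equation*}
the limit existing because $n\log\lambda-\log G_{a_1\cdots a_n}'(0)=\log G_{a_1\cdots a_n}'(y_{a_1\cdots a_n})-\log G_{a_1\cdots a_n}'(0)$ expands as a sum $\sum_{k\le n}\big[\log g_{a_k}'(\xi_k)-\log g_{a_k}'(\xi_k')\big]$ whose two arguments $\xi_k,\xi_k'$ lie in a common cylinder of diameter $\le\mu^{\,n-k}$, so the series converges geometrically by (Hölder) continuity of $\log g_i'$; this also gives Hölder continuity of $\phi$ on $Y$, and one checks directly that it solves \eqref{eq_plan_cohom}.

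The main obstacle is the passage from Hölder to $C^{r-1,\alpha}$ regularity of $\phi$ — the ``smooth Livsic'' step. In the telescoping series the base point $y$ enters only through the inverse branches $G_{a_1\cdots a_k}^{-1}$, which contract exponentially, against which the derivatives up to order $r-1$ of $\log g_i'$ are bounded; differentiating term by term therefore yields geometrically convergent series for $\partial^{j}\phi$, $j\le r-1$, with the top one $\alpha$-Hölder, and — arguing across gaps as in the proof of Proposition \ref{prop_classification_3} — one obtains genuine $C^{r-1,\alpha}$ bounds that persist under Milnor extension. Equivalently one can bootstrap directly from \eqref{eq_plan_cohom}, using that on each cylinder $Y_w$, $|w|=n$, one has $\phi|_{Y_w}=\phi\circ G_w^{-1}+\big(n\log\lambda-\log((G_w')\circ G_w^{-1})\big)$. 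Once $\phi\in C^{r-1,\alpha}(Y)$ is available, define $h$ as above and $\Phi=\{f_0,f_1\}$ with $f_i=h\circ g_i\circ h^{-1}$: then $\Phi\subseteq C^{r,\alpha}([0,1])$, it is $C^{r,\alpha}$-conjugate to $\widetilde{\Phi}$, and $f_i'\equiv\lambda$ on $X_\Phi=h(Y)$. Finally one verifies $\Phi$ is a hyperbolic IFS: $h$ increasing and fixing $0,1$ gives $0=f_0(0)<f_0(1)<f_1(0)<f_1(1)=1$, while $f_i'\equiv\lambda<1$ on $X_\Phi$ forces $f_i'<1$ on a neighbourhood of $X_\Phi$, hence on all but finitely many gaps of $X_\Phi$ (the distortion of $h$ over deep gaps tends to $1$); on the remaining finitely many gaps one simply takes the extension of $e^{\phi}$ close enough to its boundary values that $\sup_{[0,1]}|f_i'|<1$ as well. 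This completes the plan; the regularity bootstrap in the third paragraph is where the real work lies.
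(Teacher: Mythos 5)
Your overall strategy --- solve the cohomological equation $\phi\circ g_i-\phi=\log\lambda-\log g_i'$ on $Y$, extend $e^{\phi}$, and integrate to get $h$ --- is a genuinely different route from the paper's, and it founders exactly at the step you flag as ``where the real work lies.'' The assertion that ``differentiating term by term yields geometrically convergent series for $\partial^{j}\phi$'' because ``the base point enters only through the inverse branches $G_{a_1\cdots a_k}^{-1}$, which contract exponentially'' has the geometry backwards: the inverse branches of contractions are \emph{expansions}. Concretely, on a cylinder $Y_w$ with $|w|=n$ your own identity reads $\phi(z)=\phi(\zeta)+n\log\lambda-\log G_w'(\zeta)$ with $\zeta=G_w^{-1}(z)$, and formal differentiation gives $\phi'(z)=\bigl(\phi'(\zeta)-G_w''(\zeta)/G_w'(\zeta)\bigr)\cdot(G_w^{-1})'(z)$, where $(G_w^{-1})'(z)\asymp\lambda^{-n}$ is exponentially \emph{large}; boundedness of $\phi'$ therefore hinges on a cancellation $\phi'(\zeta)-G_w''(\zeta)/G_w'(\zeta)=O(\lambda^{n})$ that you never establish, and the same problem recurs at every order. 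On top of this, $\phi$ lives on a Cantor set, so ``$C^{r-1,\alpha}$'' must mean a full Whitney jet with Taylor remainder estimates before any extension theorem applies; none of that is set up. (Even at the Hölder level your convergence argument for $n\log\lambda-\log G_{a_1\cdots a_n}'(0)$ is not a genuine series argument: both points $\xi_k,\xi_k'$ in the $k$-th term depend on $n$, so you have shown the approximants are uniformly bounded, not that they are Cauchy. For $r=1$ this is fixable by the standard Livsic closing argument, and your plan then does go through; for $r\geq 2$ the gap is substantive.)

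The paper sidesteps all of this. It first linearizes the \emph{single} branch $g_0$ by the Poincar\'e--Siegel (Koenigs) theorem, which for regularity $r+\alpha>1$ already produces a $C^{r,\alpha}$ diffeomorphism $h$ with $h\circ g_0\circ h^{-1}=E_\lambda$ --- equivalently, it solves your cohomological equation for the branch $i=0$ only, via the one-sided sum $\phi(y)=\phi(0)-\sum_{k\geq0}\Delta_0(g_0^{k}y)$, which \emph{does} differentiate term by term because $(g_0^{k})'\leq\mu^{k}$ is genuinely contracting. The entire content of the theorem is then that the periodic data forces the \emph{other} branch to comply: Proposition \ref{prop_fundamental_conjugation} shows by induction, evaluating the chain rule along the periodic orbits coded by $(0^{m}1)^{\infty}$ and $w(0^{m}1w)^{\infty}$ and letting $m\to\infty$, that $f_0'\equiv\lambda$ together with $F_w'(x_w)=\lambda^{|w|}$ implies $f_1'=\lambda$ on the attractor. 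If you want to rescue your route for $r\geq2$ you would essentially have to reprove a smooth Livsic theorem on the Cantor set; restricting your telescoping to the single branch $g_0$ and then supplying the paper's periodic-point induction is the efficient repair.
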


The proof of Theorem \ref{thm_B_conjugation} relies on the following Proposition.

\begin{prop}
\label{prop_fundamental_conjugation}
Let $\Phi= \lbrace f_0,f_1 \rbrace$ be a hyperbolic IFS  of class $C^1$, and let $X$ be its attractor. If there exists $0<\lambda<1$ such that:
\begin{enumerate}[(i)]
\item Livsic condition: $F_w'(x_w)=\lambda^{|w|}$, $\forall\ w\in\mathcal{W}$, where $x_w=F_w(x_w)$; and
\item $f_0'(x)=\lambda$, $\forall x\in X_F$. 
\end{enumerate}
Then $f_1'(x)=\lambda$, $\forall x\in X$. That is, the IFS is pseudo-affine of slope $\lambda$.
\end{prop}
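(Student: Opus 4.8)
The plan is to pass to the expanding map dual to $\Phi$, recast the conclusion as the vanishing of a natural cocycle built from $\log f_1'$, and then obtain that vanishing by an induction on word length fed by a carefully chosen family of periodic orbits.

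First I would introduce the expanding map $T\colon X\to X$ with $T|_{f_i(X)}=f_i^{-1}$, so that $T(x_a)=x_{\sigma a}$ for all $a\in\{0,1\}^{\N}$ (here $\sigma$ is the shift). Since the branch domains $f_0(X)$ and $f_1(X)$ are separated, $T$ is of class $C^1$ on $X$, with $T'(x_a)=1/f_{a_1}'(x_{\sigma a})$. Set $\chi(x):=\log T'(x)+\log\lambda$ for $x\in X$; it is continuous. Three facts: (1) if $a_1=0$ then $x_{\sigma a}\in X$, so hypothesis (ii) gives $f_0'(x_{\sigma a})=\lambda$ and hence $\chi(x_a)=0$, i.e.\ $\chi$ vanishes on the cylinder $X_0$; (2) for a periodic point $z=x_{w^\infty}$ of period $p=|w|$, the chain rule gives $(T^p)'(z)=\prod_{j=0}^{p-1}T'(T^jz)$, while $T^p$ agrees with $F_w^{-1}$ near $z$ and $F_w(z)=z$, so $(T^p)'(z)=1/F_w'(z)=\lambda^{-p}$ by hypothesis (i); therefore $\sum_{j=0}^{p-1}\chi(T^jz)=\log(T^p)'(z)+p\log\lambda=0$; (3) if $a_1=1$ then $\chi(x_a)=\log\big(\lambda/f_1'(x_{\sigma a})\big)$. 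By (3), the proposition is equivalent to $\chi\equiv 0$ on $X$.

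Next, since $f_1'$ is continuous and the set $\{x_{u0^\infty}:u\ \text{a finite word (possibly empty)}\}$ is dense in $X$, it suffices to show that $\beta(u):=\log\big(\lambda/f_1'(x_{u0^\infty})\big)=\chi(x_{1u0^\infty})$ vanishes for every finite word $u$. I would prove $\beta(u)=0$ by induction on $|u|$. Fix $u=u_1\cdots u_m$; for $n\ge m$ let $z_n:=x_{(1u0^{n-m})^\infty}$, the fixed point of $F_{1u0^{n-m}}$. By (2), $\sum_{j=0}^{n}\chi(T^jz_n)=0$, and by (1) the $j$-th term vanishes unless the $j$-th letter of $1u0^{n-m}$ is $1$, i.e.\ unless $j\in\{0\}\cup S$ with $S:=\{i:1\le i\le m,\ u_i=1\}$, a finite set independent of $n$. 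A look at the codings shows that for each such $j$ one has $T^jz_n\to x_{1u_{j+1}\cdots u_m0^\infty}$ as $n\to\infty$ (reading $u_{j+1}\cdots u_m=u$ when $j=0$), and hence, by continuity of $\chi$, $\chi(T^jz_n)\to\beta(u_{j+1}\cdots u_m)$. Letting $n\to\infty$ in the finite relation $\chi(z_n)+\sum_{i\in S}\chi(T^iz_n)=0$ gives
$$\beta(u_1\cdots u_m)+\sum_{i\in S}\beta(u_{i+1}\cdots u_m)=0.$$
Each $i\in S$ satisfies $i\ge 1$, so every word $u_{i+1}\cdots u_m$ occurring in the sum is strictly shorter than $u$; by the inductive hypothesis those terms vanish, leaving $\beta(u)=0$. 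The base case $u=e$ (where $S=\emptyset$) reads $\beta(e)=0$. Thus $\beta\equiv 0$, and by density and continuity $f_1'\equiv\lambda$ on $X$.

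The routine parts are: that $T$ is $C^1$ on the separated pieces $f_0(X),f_1(X)$ (so $\chi$ is continuous) together with the chain-rule identity for $(T^p)'$, and the density of $\{x_{u0^\infty}\}$ in $X$. The one genuinely delicate choice is the family of periodic orbits. Approximating an arbitrary point of $X$ by arbitrary periodic points produces Birkhoff-sum relations in which $f_1'$ is evaluated at uncontrollably many points that cannot be disentangled; the resolution is to first reduce to the dense set $\{x_{u0^\infty}\}$, whose codings carry only finitely many $1$'s, and then to use orbits of the special shape $(1u0^{\mathrm{long}})^\infty$, so that only finitely many $\beta$-terms survive in the limit and, crucially, all but one of them involve \emph{strictly shorter} words — making the relation triangular and closing the induction.
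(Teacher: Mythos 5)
Your proof is correct and follows essentially the same route as the paper's: the same dense set of points coded by a finite word followed by $0^\infty$, the same induction on word length, and the same family of periodic orbits (your $(1u0^{N})^\infty$ is a cyclic rotation of the paper's $(w0^{m}1)^\infty$), with hypothesis (ii) annihilating the zero-letters and continuity closing the limit. The only difference is cosmetic: you phrase the multiplicative chain-rule identity additively via the expanding map and the cocycle $\chi=\log T'+\log\lambda$, whereas the paper manipulates the product of derivatives directly.
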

We remark that the following proofs are related to the arguments given in \cite[Section 6]{algom2023polynomial}.
\begin{proof}[Proof of Theorem \ref{thm_B_conjugation} assuming Proposition \ref{prop_fundamental_conjugation}]
If we consider the action on the interval generated only by the contraction $g_0:[0,1]\to[0,1]$, under the hypothesis of regularity $s=r+\alpha>1$, the Poincar\'{e}-Siegel Theorem \cite[Theorem 2.8.2]{Katok1995Hass} asserts that $g_0$ is $C^s$-conjugated to an affine contraction. That is, there exists a $C^s$-diffeomorphism $h:[0,1]\to[0,1]$ such that $E_\lambda\circ h=h\circ g_0$, where $E_\lambda:t\mapsto\lambda\cdot t$ and $\lambda=g_0'(0)$.

Consider the IFS $\Phi= \lbrace f_0,f_1 \rbrace$ defined by $f_0=E_\lambda$ and $f_1=h\circ g_1\circ h^{-1}$, which is of class $C^s$. It is direct to see that $F_w\circ h=h\circ G_w$, for every finite word $w\in\mathcal{W}$. Applying the chain rule we get that 
$$F_w'(x_w)=G_w'(y_w)=\lambda^{|w|},\ \forall\ w\in\mathcal{W},\ \text{where}\ x_w=h(y_w).$$
Hence, the IFS $\Phi$ satisfies the hypothesis of Proposition \ref{prop_fundamental_conjugation}. Thus, it is pseudo-affine, as required.  
\end{proof}

\begin{proof}[Proof of Proposition \ref{prop_fundamental_conjugation}]
For every finite word $w=w_1\cdots w_n$ let $p_w\in X$ be the point whose coding in $\{0,1\}^\N$ is the sequence starting with $w$ followed by infinitely many zeros. For the empty word this corresponds, by our definition of an hyperbolic IFS, to $p_e=0\in[0,1]$. For any other word $w$ finishing with $w_n=1$ the point $p_w$ is the left boundary point of the closed interval $K_w=F_w([0,1])$ (cf. Section \ref{Section: binary cantor set}). Since the set $\{p_w:w\in\mathcal{W}\}$ is dense in $X$, to prove the Proposition it is enough to show that $f_1'(p_w)=\lambda$, for every $w\in\mathcal{W}$. We will proceed by induction on the size $n=|w|$ of the words.

\smallskip
\noindent
\textbf{Base case:} \emph{We have that $f_1'(0)=\lambda$.}
\smallskip

Observe that from the Livsic condition $(i)$ we directly obtain that $f_1'(1)=\lambda$, but we do not have a priori information about $f_1'(0)$. To prove this, we will approximate the point $p_e=0$ with some special periodic points where we can calculate the derivative of $f_1$. 

For every $m\geq 0$ let $x_m\in X$ be the point with coding
$ (\overbrace{0\cdots 0}^{m}1)^{\infty}$. That is, the coding of $x_m$ is the periodic sequence obtained by infinite concatenation with itself of the string consisting in $m$ consecutive symbols equal to $0$ followed by one $1$. We have that
\begin{equation*}
\lambda^{m+1}=F_{(0\cdots 0 1)}'(x_m)=\left(\prod_{k=0}^{m-1}f_0'\left(f_0^k\circ f_1(x_w)\right)\right)\cdot f_1'(x_m)=\lambda^{m}\cdot f_1'(x_m),
\end{equation*} 
from where it follows that $f_1'(x_m)=\lambda$. Since $x_m\to 0$ with $m\to+\infty$ and $f_1'$ is continuous, we conclude that $f_1'(0)=\lambda$.

\smallskip
\noindent
\textbf{Inductive step:} 
\emph{Given $n\geq 1$, assume that $f_1'(p_u)=\lambda$ for every word $u\in\mathcal{W}$ of length $|u|<n$. Then, it is verified that $f_1'(p_w)=\lambda$ for every word $w$ of length $|w|=n$.}
\smallskip

Let $w=w_1\cdots w_n$ be a finite word and let $x_m\in X_F$ be the point with coding $x_m\simeq w\cdot(\overbrace{0\cdots 0}^{m}1w)^\infty$. Since $F_{(w 0\cdots 01)}(x_m)=x_m$ there is a closed orbit
\begin{align*}
x_m\xmapsto[]{f_1} x_m^1\xmapsto[]{f_0^m} z_m\xmapsto[]{f_{w_n}} z_m^1 \mapsto\cdots\mapsto z_m^{n-1}\xmapsto[]{f_{w_1}} x_m,
\end{align*}
where $x_m^1=f_1(x_m)$, $z_m  =f_0^m(x_m^1)$ and $z_m^k =f_{w_{n+1-k}}(z_m^{k-1})$, $\forall\ k=1,\cdots,n-1$. Applying the chain rule on this expression we obtain  
\begin{align*}
\lambda^{n+m+1}=\left(\prod_{i=1}^n f_{w_i}'(z_m^{n-i})\right)\cdot\lambda^m\cdot f_1'(x_m).
\end{align*}
and hence 
\begin{equation}
\label{eq_estimate_Df_1}
\frac{f_1'(x_m)}{\lambda}=\left(\prod_{i=1}^n\frac{f_{w_i}'(z_m^{n-i})}{\lambda}\right)^{-1}.
\end{equation}

We can estimate now the value of each $f_{w_i}'(z_m^{k})$, $k=0,\dots,n-1$, by applying the inductive hypothesis. For this, observe that the coding of the points $z_m^k$ are given by the expressions
\begin{align*}
z_m &=(\underbrace{0\cdots 0}_{m}1w)^\infty, \\
z_m^1 &=w_n\cdot(\underbrace{0\cdots 0}_{m}1w)^\infty, \\
& \vdots \\
z_m^{n-1} &=w_2\cdots w_n\cdot(\underbrace{0\cdots 0}_{m}1w)^\infty.
\end{align*}
Define $q_0=0$ and for each $k=1,\dots n-1$ define $q_k\in X$ to be point $q_k=p_{w_{n+1-k}\cdots w_n}$ whose coding is $q_k\simeq w_{n+1-k}\cdots w_n \cdot (0)^\infty$. Observe that for each $k=0,\dots,n-1$ the coding of $z_m^{k}$ and $q_k$ coincide at least up to order $m$. This means that $z_m^k\to q_k$ when $m\to+\infty$, and since $f_1'$ is uniformly continuous then also $f_1'(z_m^k)\to \lambda$ when $m\to+\infty$. Using this together with the expression in equation \eqref{eq_estimate_Df_1} above, we obtain that
$$\lim_{m\to+\infty}\left(\prod_{i=1}^n\frac{f_{w_i}'(z_m^{n-i})}{\lambda}\right)^{-1}=1,$$
and since $x_m\to p_w$ when $m\to+\infty$ we conclude that $f_1'(p_w)=\lambda$. This concludes the inductive step, and the proof of the Proposition.
\end{proof}

\begin{rem}
It is interesting to note that in this proof we do not actually need the hypothesis of $C^s$-regularity with $s>1$ for both branches of the IFS. Instead, it is enough to have one branch $f_i$ that is $C^1$-conjugated to an affine contraction.   
\end{rem}

%%%%%%%%%%%%%%%%%%%%%%%%%%%%%%%%%%%%%%%%%%%%%%%%%%%%%%%%%%%%%%%%%%%%%%%%%%%%%%%%%%%%%%%%%%%%%%%%%%%%%%%%%%%%%%%%%%%%%%%%%%%%%%%%%%%%%%%%%%%%%%%%%%

\subsection{On the proof of Theorem \ref{Threom C} Part (1)} \label{sec_Livshitz-II}
In this section we prove Theorem \ref{Threom C} Part (1). Namely, we show that for a $C^1$ hyperbolic IFS, the Livsic condition
\eqref{livshitz_condition} does not ensure $C^1$-conjugacy to a pseudo-affine IFS.
 Formally,
\begin{thm} \label{Theorem livsic fails}
There exists a $C^1([0,1])$ hyperbolic IFS $\Phi= \lbrace g_0,g_1 \rbrace\subseteq C^1([0,1])$ such that:
\begin{enumerate}
\item It satisfies the Livsic condition: for some $\lambda \in (0,\frac{1}{2})$,
$$G_w'(y_w)=\lambda^{|w|}, \forall w\in\mathcal{W}, $$
\item It is not $C^1$-conjugate to a pseudo-affine IFS. 
\end{enumerate}

\end{thm}

\medskip
We begin by setting our framework: Let $T:[0,1]\to [0,1]$ be the {\em angle-tripling map} 
$$T(x)=3\cdot x\text{ mod }1,$$ 
and let $\Lambda\subseteq [0,1]$ denote the ($T$ invariant) middle thirds Cantor set. 

\begin{thm}\label{mainThm}
There exist an increasing homeomorphism $h:[0,1]\to [0,1]$ and a continuous function $\varphi:\Lambda \rightarrow \mathbb{R}$ such that:
\begin{enumerate}
\item $\varphi$ is not a $T|_\Lambda$ coboundary: There does \textbf{not} exist a continuous function $u:\Lambda\rightarrow \mathbb{R}$ such that
$$\varphi=u-u\circ T|_\Lambda.$$
Nonetheless, $\int_\Lambda \varphi\, d\nu = 0$ for every $T|_\Lambda$--invariant
Borel probability measure $\nu$ on $\Lambda$.

\item Writing $\widehat{T}:=h^{-1}\circ T\circ h$, we have
$\widehat{T}\in C^1([0,1])$, $\widehat{T}(h^{-1}(\Lambda))=h^{-1}(\Lambda)$,
and there exists some $\log 2<P<\log 3$ such that for every $x\in h^{-1}(\Lambda)$,
$$
\log \widehat{T}'(x)=P-\varphi(h(x))>0.
$$
\end{enumerate}	
\end{thm}
We proceed to deduce Theorem \ref{Theorem livsic fails} from Theorem \ref{mainThm}. Set $\widehat T:=h^{-1}\circ T\circ h$ and  $\widehat\Lambda:=h^{-1}(\Lambda)$ as in Theorem \ref{mainThm}. We will show two statements: First,
that the restriction $\widehat T|_{\widehat\Lambda}$ satisfies a %n ``expanding"
Livsic %-type
condition:
\begin{equation} \label{eq expandin livsic}
(\widehat T^n)'(p)=e^{nP}, \qquad \text{for all } p\in \mathrm{Per}_n(\widehat T),
\end{equation}
where $\mathrm{Per}_n(\widehat T) = \lbrace p\in \widehat\Lambda:\, \widehat T^n p =p \rbrace$ denotes the set of $n$-periodic points. Second, we will show that $\widehat T$ is not $C^1$-conjugate to any expanding map on a compact invariant set that has constant derivative on that set.

Consequently, taking the inverse branches of $\widehat T$, we obtain a $C^1([0,1])$ hyperbolic IFS satisfying
the %bona-fide 
Livsic condition \eqref{livshitz_condition}, but not $C^1$-conjugate to a pseudo--affine IFS. Note that for this purpose the fact that $P>\log 2$ is only used to ensure that the derivatives of both branches are bounded by $e^{-P}<\frac{1}{2}$. Since $h$ is a homeomorphism conjugating it to the middle-$1/3$ Cantor set, the first branch fixes $0$ and the second fixes $1$, whence this IFS is indeed hyperbolic. This is Theorem~\ref{Theorem livsic fails}.

We thus begin by establishing \eqref{eq expandin livsic}. By Theorem \ref{mainThm} Part (2),
\[
\log \widehat T'(x)=P-\varphi(h(x))
\quad \text{for all } x\in\widehat\Lambda .
\]
If $p\in\mathrm{Per}_n(\widehat T)$, then $h(p)\in\Lambda$
is $n$-periodic for $T$. So, by summing along its orbit, we get
\[
\log (\widehat T^n)'(p)
=\sum_{k=0}^{n-1}\log \widehat T'(\widehat T^k p)
= n\cdot P-\sum_{k=0}^{n-1}\varphi(T^k h(p)).
\]
By Theorem~\ref{mainThm} Part (1), the function $\varphi$
has zero integral against every $T|_\Lambda$-invariant probability measure.
In particular,  $q:=h(p)$ has period $n$, so the
periodic orbit measure
\[
\nu_q:=\frac1n\sum_{k=0}^{n-1}\delta_{T^k q}
\]
is $T$-invariant. Therefore
\[
0=\int \varphi\,d\nu_q
=\frac1n\sum_{k=0}^{n-1}\varphi(T^k q).
\]
We thus obtain
\[
\log (\widehat T^n)'(p)=n\cdot P,
\]
which proves \eqref{eq expandin livsic}.

Now, assume for contradiction that $\widehat T=g^{-1}\circ A\circ g$
where $g\in C^1([0,1])$ is a diffeomorphism,  $A\in C^1([0,1])$  preserves and is expanding on the compact set $\Lambda_A$, and furthermore has constant derivative $a\in \mathbb{R}$ on $\Lambda_A$. In particular, $g'$ is non vanishing on $\widehat{\Lambda}$, and so $\log |g'|$ is continuous on $\widehat{\Lambda}$.

We first claim that $a=e^P$. Since $0\in\Lambda$ and $T(0)=0$, the map
$T|_\Lambda$ has a fixed point, hence $\widehat T|_{\widehat\Lambda}$ has
the fixed point $p:=h^{-1}(0)\in\widehat\Lambda$.
Let $q:=g(p)\in\Lambda_A$. Then $A(q)=q$, and differentiating
$\widehat T=g^{-1}\circ A\circ g$ at $p$ gives
\[
\widehat T'(p)= (g^{-1})'(A(g(p)))\cdot A'(g(p))\cdot g'(p)
= \frac{A'(q)\,g'(p)}{g'(\widehat T(p))}.
\]
Since $p$ is a fixed point of $\widehat T$, we have $\widehat T(p)=p$, hence
\[
\widehat T'(p)=A'(q)=a.
\]
On the other hand, by \eqref{eq expandin livsic} with $n=1$ we have
$\widehat T'(p)=e^P$. Hence $a=e^P$.

We therefore obtain on $\widehat\Lambda=g^{-1}(\Lambda_A)$ that
\[
\log \widehat T'= \log|\widehat T'|
=\log|g'|-\log|g'|\circ \widehat T+\log|a|.
\]
Combining this with Theorem~\ref{mainThm} Part (2) (noting that
$\widehat T'(x)>0$ for $x\in\widehat\Lambda$) and with $|a|=e^P$ yields
\[
\varphi\circ h
= -\log|g'| + \log|g'|\circ \widehat T .
\]
Setting $u:=-(\log|g'|)\circ h^{-1}\in C(\Lambda)$,
we deduce
\[
\varphi=u-u\circ T \quad \text{on } \Lambda,
\]
contradicting Theorem~\ref{mainThm} Part (1). This completes the proof of
Theorem~\ref{Theorem livsic fails}.

\subsubsection{On the proof of Theorem \ref{mainThm}}
We begin by recalling %Kocsard's example 
\cite{example2}, which we use in order to construct the function $\varphi$ in Theorem~\ref{mainThm}.
Write $X:=\{0,1,2\}^{\mathbb N}$ and let $\sigma:X\to X$ be the left shift.
By %Kocsard's classification of cohomological $C^0$--stability
\cite[Thm.~A]{example2}, there exists a function
$\phi_0\in C(X)$ such that:
\begin{enumerate}
\item[(a)] $\phi_0$ lies in the uniform closure of the space of coboundaries
$\{u-u\circ\sigma:\,u\in C(X)\}$, but $\phi_0$ itself is not a coboundary;
\item[(b)] for every $\sigma$--invariant Borel probability measure $\nu$,
\[
\int_X \phi_0\,d\nu=0,
\]
since each coboundary integrates to zero against invariant measures and
$\phi\mapsto\int\phi\,d\nu$ is continuous in the $\|\cdot\|_\infty$-norm.
\end{enumerate}
In particular, by (a) there is no $u\in C(X)$ with
$\phi_0=u-u\circ\sigma$.

\medskip
We now transfer this information to the Cantor set.
Let $\Sigma:=\{0,2\}^{\mathbb N}\subset X$ denote the full shift on two symbols,
and let $\pi:\Sigma\to\Lambda$ be the standard coding map via the triadic expansion,
which satisfies
\[
\pi\circ\sigma|_{\Sigma} = T|_\Lambda\circ\pi .
\]
Restrict $\phi_0$ to $\Sigma$ and define
\[
\varphi_0:=\phi_0|_{\Sigma}\circ\pi^{-1}\in C(\Lambda).
\]
Then $\varphi_0$ is not a $T|_\Lambda$--coboundary, while for every
$T|_\Lambda$--invariant probability measure $\eta$ on $\Lambda$,
\[
\int_\Lambda \varphi_0\,d\eta
=\int_{\Sigma} \phi_0\,d(\pi^{-1}_*\eta)=0,
\]
since $\pi^{-1}_*\eta$ is $\sigma$--invariant.

Fix a continuous extension $\varphi\in C([0,1])$ of $\varphi_0$.
Since $\varphi$ is bounded, we may replace $\varphi$ by $t\varphi$ for some $t\in(0,1)$.
This scaling does not affect the coboundary properties as above on $\Lambda$, and it preserves
the vanishing of $\int_\Lambda \varphi\,d\eta$ for all $T|_\Lambda$--invariant measures $\eta$.

\medskip
Let $P(\varphi)$ denote the topological pressure of $\varphi$
(see e.g.\ \cite{Walters1982ergodic}).
Since $P(0)=h_{\mathrm{top}}(T)=\log 3$ and $P(\cdot)$ depends continuously
(in fact, $1$--Lipschitz) on the potential in the $\|\cdot\|_\infty$ norm,
for every $\varepsilon>0$ we may choose $t>0$ sufficiently small so that, after the
replacement $\varphi\leftarrow t\varphi$, we have
\begin{equation}\label{eq:pressure-close}
P(\varphi)>\log 3-\varepsilon.
\end{equation}
In particular, we may choose $\varepsilon\in(0,\log\frac{3}{2})$ and ensure that $P(\varphi)>\log 2$. 

\medskip
By applying thermodynamic formalistic tools to study the %for expanding Markov maps (such as $T$) applied to the normalized 
properties of the potential $\varphi-P(\varphi)$, we have the following Lemma:
\begin{lem}\label{lem:conformal-measure}
 There exists a Borel probability measure $\mu$ on $[0,1]$ such that for every Borel set
$E$ contained in a single monotone branch of $T$,
\begin{equation}\label{eq:conformal}
\mu(T[E])=\int_E e^{P(\varphi)-\varphi(y)}\,d\mu(y).
\end{equation}
Moreover, $\mu$ is non-atomic and is fully-supported (hence $\mu(I)>0$ for every nonempty interval $I$).
\end{lem}
The proof is similar to \cite[Thm.~5.1]{InvFams}, while for the sake of completeness we present full details adapted to our specific context. As to not disrupt the flow of the proof, we postpone its proof, and proceed to deduce Theorem \ref{mainThm} from it.
\medskip

Define the conjugacy by
\[
h^{-1}(z):=\mu([0,z]),\qquad z\in[0,1].
\]
Since $\mu$ is non-atomic and fully supported, $h^{-1}$ is continuous
and strictly increasing, hence has a continuous inverse
$h:[0,1]\to[0,1]$; thus $h$ is a homeomorphism.
Set $\widehat T:=h^{-1}\circ T\circ h$ and
$\widehat\Lambda:=h^{-1}(\Lambda)$.

Fix $x\in\widehat\Lambda$ and write $z:=h(x)\in\Lambda$.
Working inside a monotone branch of $T$ and letting $z'\to z$,
\eqref{eq:conformal} applied to $E=[z,z']$ gives
\[
h^{-1}(T(z'))-h^{-1}(T(z))
=\int_{[z,z']} e^{P(\varphi)-\varphi(y)}\,d\mu(y).
\]
By the continuity of $\varphi$,
\[
= e^{P(\varphi)-\varphi(z)+o(1)}(h^{-1}(z')-h^{-1}(z)).
\]
Setting $z'=h(x+\varepsilon)$, dividing by $\varepsilon$, and taking $\varepsilon$ to $0$, yields
\[
\widehat T'(x)=e^{P(\varphi)-\varphi(h(x))}.
\]
In particular,
\[
\log\widehat T'(x)=P(\varphi)-\varphi(h(x)).
\]
Since $P(\varphi)>\log 3-\varepsilon$ and $\|\varphi\|_\infty$ can be made
arbitrarily small by the same scaling, we may (after possibly shrinking $t$ further)
assume that $P(\varphi)-\varphi(h(x))>0$ for all $x\in\widehat\Lambda$. Define this $P(\varphi)$ as our $P$. 
This proves the claimed positivity of $\widehat T'$ on $\widehat\Lambda$.
$$ $$

It remains to prove Lemma \ref{lem:conformal-measure}.
\begin{proof}
Fix $\varepsilon>0$. Since $[0,1]$ is compact and $\varphi$ is continuous, there exists
a Lipschitz function on $[0,1]$, $\varphi_\varepsilon\in\mathrm{Lip}([0,1])$, such that
\begin{equation}\label{eq:approx}
\|\varphi_\varepsilon-\varphi\|_\infty\le\varepsilon.
\end{equation}
(For instance, take the inf/sup-convolution or a piecewise-linear approximation.)

Next, let 
$$\Big\{ I_j:=[\frac{j}{3},\frac{j+1}{3}): \, j\in\{0,1,2\} \Big\}$$ 
be the Markov partition for $T$, and let $T_j^{-1}:[0,1]\to I_j$ be the inverse branches.
Define the transfer operator $\mathcal{L}_{\varphi_\varepsilon}:C([0,1])\rightarrow C([0,1])$ by
\[
(\mathcal{L}_{\varphi_\varepsilon}f)(x)=\sum_{j=0}^2 e^{\varphi_\varepsilon(T_j^{-1}x)}\,f(T_j^{-1}x).
\]
By standard thermodynamic formalism for expanding Markov maps for Lipschitz potentials
(see \cite{RuelleTDFExpanding}), there exists a Borel probability measure $\mu_\varepsilon$ such that
\begin{equation}\label{eq:eigenmeasure}
\mathcal{L}_{\varphi_\varepsilon}^*\mu_\varepsilon = e^{P(\varphi_\varepsilon)}\,\mu_\varepsilon.
\end{equation}
Equivalently, for all $f\in C([0,1])$,
\begin{equation}\label{eq:dual}
\int \mathcal{L}_{\varphi_\varepsilon}f\,d\mu_\varepsilon
= e^{P(\varphi_\varepsilon)}\int f\,d\mu_\varepsilon.
\end{equation}

Next, fix $j\in\{0,1,2\}$ and let $E\subset I_j$ be Borel.
We first prove that
\begin{equation}\label{eq:conformal-eps}
\mu_\varepsilon(T[E])=\int_E e^{P(\varphi_\varepsilon)-\varphi_\varepsilon(y)}\,d\mu_\varepsilon(y).
\end{equation}
To do this without applying $\mathcal{L}_{\varphi_\varepsilon}$ to an indicator function,
approximate $\mathbf{1}_{T[E]}$ from below by continuous functions.
Choose $f_n\in C([0,1])$ such that $0\le f_n\uparrow \mathbf{1}_{T[E]}$ pointwise
(e.g.\ take $f_n(x)=\max\{0,1-n\,\mathrm{dist}(x,T[E]^c)\}$).
Set
\[
g_n := (f_n\circ T)\cdot \mathbf{1}_{I_j}.
\]
Then $g_n$ is bounded and Borel, and on each $x\in[0,1]$ we have
\[
(\mathcal{L}_{\varphi_\varepsilon} g_n)(x)
=\sum_{i=0}^2 e^{\varphi_\varepsilon(T_i^{-1}x)}\, g_n(T_i^{-1}x)
= e^{\varphi_\varepsilon(T_j^{-1}x)}\, f_n(x),
\]
since $T_i^{-1}x\in I_i$ and thus $\mathbf{1}_{I_j}(T_i^{-1}x)=0$ for $i\neq j$.
Now apply \eqref{eq:dual} to the continuous function
\[
h_n(x):=e^{-\varphi_\varepsilon(T_j^{-1}x)}\,(\mathcal{L}_{\varphi_\varepsilon} g_n)(x)
= f_n(x),
\]
i.e.\ simply apply \eqref{eq:dual} to $f_n$ and rewrite it via the identity above:
\[
\int e^{\varphi_\varepsilon(T_j^{-1}x)} f_n(x)\,d\mu_\varepsilon(x)
= \int \mathcal{L}_{\varphi_\varepsilon} g_n\,d\mu_\varepsilon
= e^{P(\varphi_\varepsilon)}\int g_n\,d\mu_\varepsilon.
\]
Therefore
\begin{equation}\label{eq:prelimit}
\int e^{\varphi_\varepsilon(T_j^{-1}x)} f_n(x)\,d\mu_\varepsilon(x)
= e^{P(\varphi_\varepsilon)}\int_{I_j} f_n(Ty)\,d\mu_\varepsilon(y).
\end{equation}
Now restrict the right-hand integral to $E$ by replacing $g_n$ with
$(f_n\circ T)\cdot \mathbf{1}_{E}$ (still Borel and bounded); the same computation gives
\[
\int e^{\varphi_\varepsilon(T_j^{-1}x)} f_n(x)\,\mathbf{1}_{T[E]}(x)\,d\mu_\varepsilon(x)
= e^{P(\varphi_\varepsilon)}\int_{E} f_n(Ty)\,d\mu_\varepsilon(y).
\]
Letting $n\to\infty$ and using monotone convergence (all integrands are nonnegative and bounded by
$e^{\|\varphi_\varepsilon\|_\infty}$) yields
\[
\int_{T[E]} e^{\varphi_\varepsilon(T_j^{-1}x)}\,d\mu_\varepsilon(x)
= e^{P(\varphi_\varepsilon)}\,\mu_\varepsilon(E).
\]
Finally, since $T|_{I_j}$ is a bijection $I_j\to[0,1]$, the change of variables $x=T(y)$
with $y\in E\subset I_j$ simply rewrites the last identity as \eqref{eq:conformal-eps}.
Thus \eqref{eq:conformal-eps} holds for all Borel $E\subset I_j$.

Next, by compactness of probability measures on $[0,1]$ in the weak-$*$ topology,
choose $\varepsilon_k\downarrow0$ such that $\mu_{\varepsilon_k}\to\mu$ weak-$*$.

We also need $P(\varphi_{\varepsilon_k})\to P(\varphi)$. This follows from the standard
$\|\cdot\|_\infty$--continuity of pressure for expanding maps:
\begin{equation}\label{eq:pressure-cont}
|P(\psi)-P(\psi')|\le \|\psi-\psi'\|_\infty
\quad\text{for }\psi,\psi'\in C([0,1]),
\end{equation}
hence by \eqref{eq:approx}, $P(\varphi_{\varepsilon_k})\to P(\varphi)$.

Fix a branch $I_j$ and a Borel set $E\subset I_j$.
Apply \eqref{eq:conformal-eps} with $\varepsilon=\varepsilon_k$:
\[
\mu_{\varepsilon_k}(T[E])
=\int_E e^{P(\varphi_{\varepsilon_k})-\varphi_{\varepsilon_k}(y)}\,d\mu_{\varepsilon_k}(y).
\]
Let
\[
F_k(y):= e^{P(\varphi_{\varepsilon_k})-\varphi_{\varepsilon_k}(y)},\qquad
F(y):= e^{P(\varphi)-\varphi(y)}.
\]
By \eqref{eq:approx} and $P(\varphi_{\varepsilon_k})\to P(\varphi)$ we have
\[
\|F_k-F\|_\infty \to 0,
\]
and the functions $F_k$ are uniformly bounded and continuous.
Therefore, using weak-$*$ convergence and uniform convergence of the integrands,
\[
\int_E F_k\,d\mu_{\varepsilon_k}\longrightarrow \int_E F\,d\mu.
\]
Also, since $T[E]$ is Borel and $\mu_{\varepsilon_k}\to\mu$ weak-$*$, we have
$\mu_{\varepsilon_k}(T[E])\to\mu(T[E])$ for all $E$ such that $\mu(\partial T[E])=0$.
To remove this boundary issue for general Borel $E$, first prove \eqref{eq:conformal}
for half-open intervals $E=[a,b)\subset I_j$ (for which $\partial T[E]$ is finite and
$\mu$ is non-atomic, see Step 5), and then extend to all Borel sets inside $I_j$
by a standard monotone class argument.
Thus we obtain \eqref{eq:conformal}.

It remains to study the stated properties of $\mu$. First, 
fix $x\in[0,1]$ and pick $j$ such that $\{x\}\subset I_j$.
Applying \eqref{eq:conformal} to $E=\{x\}$ yields
\[
\mu(\{T(x)\})=e^{P(\varphi)-\varphi(x)}\mu(\{x\}).
\]
Iterating,
\[
\mu(\{T^n(x)\})=\exp\!\Bigl(nP(\varphi)-\sum_{k=0}^{n-1}\varphi(T^k x)\Bigr)\mu(\{x\}).
\]
Since $\varphi$ is bounded, if $\mu(\{x\})>0$ then the right-hand side grows exponentially
in $n$ provided $P(\varphi)>\|\varphi\|_\infty$. In our application we arrange this
by scaling $\varphi$ before starting the construction. As $\mu(\{T^n(x)\})\le 1$, it follows that
$\mu(\{x\})=0$. Hence $\mu$ is non-atomic.

Finally, let $U\subset[0,1]$ be a nonempty open interval. Since $T(x)=3x\!\!\mod1$
is topologically exact, there exists $n\ge1$ such that $T^n[U]=[0,1]$.
Partition $U$ into finitely many intervals $U=\bigsqcup_{\ell=1}^m U_\ell$ on which
$T^n$ is monotone (hence injective). Then
\[
[0,1]=T^n[U]=\bigcup_{\ell=1}^m T^n[U_\ell],
\]
so for some $\ell_0$ the set $T^n[U_{\ell_0}]$ contains a nonempty open interval $J$.
Since $\mu$ is a probability measure, there exists such a $J$ with $\mu(J)>0$.

Because $T^n$ is injective on $U_{\ell_0}$, iterating \eqref{eq:conformal} along the
$n$ steps (branch by branch) yields, for every Borel $E\subset U_{\ell_0}$,
\[
\mu(T^n[E])
=\int_E \exp\!\Bigl(nP(\varphi)-\sum_{k=0}^{n-1}\varphi(T^k y)\Bigr)\,d\mu(y).
\]
Apply this with $E:=U_{\ell_0}\cap T^{-n}[J]$ (which is nonempty and open in $U_{\ell_0}$).
Using $\varphi\ge -\|\varphi\|_\infty$ we get
\[
\mu(J)=\mu(T^n[E])
\le e^{nP(\varphi)+n\|\varphi\|_\infty}\,\mu(E),
\]
hence $\mu(E)>0$. Since $E\subset U$, this gives $\mu(U)>0$.
As $U$ was arbitrary, $\mu$ has full support.

\end{proof}

%%%%%%%%%%%%%%%%%%%%%%%%%%%%%%%%%%%%%%%%%%%%%%%%%%%%%%%%%%%%%%%%%%%%%%%%%%%%%%%%%%%%%%%%%%%%%%%%%%%%%%%%%%%%%%%%%%%%%%%%%%%%%%%%%%%%%%%%%%%%%%%%%%%%
\bibliography{bib}{}
\bibliographystyle{plain}
\end{document}